\documentclass[11pt,reqno]{amsart} 

\usepackage[left=20mm,right=20mm,top=20mm,bottom=20mm]{geometry}
\usepackage{graphicx} 
\usepackage{array} 
\usepackage{paralist} 
\usepackage{amsmath,amsfonts,amsthm,amssymb}
\usepackage{cite}
\usepackage[round]{natbib}

\theoremstyle{plain}
\newtheorem{theorem}{Theorem}[section]
\newtheorem{lemma}[theorem]{Lemma}
\newtheorem{proposition}[theorem]{Proposition}
\newtheorem{corollary}[theorem]{Corollary}

\theoremstyle{definition}
\newtheorem{definition}[theorem]{Definition}
\newtheorem{assumption}[theorem]{Assumption}
\newtheorem{notation}[theorem]{Notation}

\theoremstyle{remark}
\newtheorem{remark}[theorem]{Remark}
\newtheorem{example}[theorem]{Example}

\makeatletter
\@addtoreset{equation}{section}

\makeatother

\begin{document}

\title[Lead-lag estimation between two processes driven by fBM]
{Estimation of the lead-lag parameter between two stochastic processes driven by fractional Brownian motions}
\author{Kohei Chiba}
\address{Graduate School of Mathematical Sciences, the University of Tokyo, Komaba, Tokyo,
153-8914, Japan.}
\email{kchiba@ms.u-tokyo.ac.jp}
\date{} 
\keywords{fractional Brownian motion; lead-lag effect; non-synchronous observations; contrast estimation.}
\subjclass[2010]{62M09 and 60G22}

\begin{abstract}
In this paper, we consider the problem of estimating the lead-lag parameter between two stochastic processes driven by fractional Brownian motions (fBMs) of the Hurst parameter greater than 1/2. First we propose a lead-lag model between two stochastic processes involving fBMs, and then construct a consistent estimator of the lead-lag parameter with possible convergence rate. Our estimator has the following two features. Firstly, we can construct the lead-lag estimator without using the Hurst parameters of the underlying fBMs. Secondly, our estimator can deal with some non-synchronous and irregular observations. We explicitly calculate possible convergence rate when the observation times are (1) synchronous and equidistant, and (2) given by the Poisson sampling scheme. We also present numerical simulations of our results using the R package YUIMA.
\end{abstract}

\maketitle

\section{Introduction}\label{section1}

Lead-lag effect is phenomenon that some asset prices follow the fluctuation of others with a small time lag. Recently, milli-second level high frequency trading data are available, and the lead-lag effect is observed at such a fine time scale. For a phenomenological perspective, see \cite{huth2014high} and references therein. If we can effectively use the lead-lag effect, it ultimately leads to the prediction of future behavior of stock prices. Hence it is important to analyse the lead-lag effect for developing a trading strategy. 

In order to analyse the lead-lag effect statistically, \cite{hoffmann2013estimation} introduced a \textit{regular semimartingale with the lead-lag parameter $\theta\in(-\delta,\delta)$} for $\delta>0$, and constructed a consistent estimator for the lead-lag parameter $\theta$. Here we briefly review their framework. Before describing the model, we introduce some notations that are used throughout this paper. We follow the notations used in \cite{hoffmann2013estimation}.

\begin{notation}
Let $T>0$ be a terminal time and $\delta>0$ the maximum temporal lead-lag allowed for the model. We set $\Theta=(-\delta,\delta)$. The underlying probability space is denoted by $(\Omega,\mathcal{F},\mathbb{P})$, and we consider a filtration $\mathbb{F}=(\mathcal{F}_{t})_{t\in[-\delta,T+\delta]}$ satisfying the usual conditions on $(\Omega,\mathcal{F},\mathbb{P})$. We set $\tau_{-s}(X)_{t}=X_{t+s}$ for a stochastic process $X$ and a real number $s\in\mathbb{R}$. For a subinterval $[a,b]$ of $[-\delta,T+\delta]$, we set $ \mathbb{F}_{[a,b]} = (\mathcal{F}_{t})_{t\in[a,b]} $. Finally, for a lead-lag parameter $\theta\in(-\delta,\delta)$, the shifted filtration $(\mathcal{F}_{t-\theta})_{t\in[-\delta+\theta,T+\delta+\theta]}$ is denoted by $ \mathbb{F}^{\theta} $.
\end{notation}

They defined a \textit{regular semimartingale with the lead-lag parameter} $\theta$ as follows.

\begin{definition}\label{llag1}
The two-dimensional stochastic process $\{(X_{t},Y_{t})\}_{t\in[0,T+\delta]}$ is called a regular semi-martingale with the lead-lag parameter $\theta\in[0,\delta)$ if X and Y admit the decomposition $X=X^{c} + A$ and $Y=Y^{c} + B$ respectively, where 
\begin{itemize}
\item[(1)] the process $X^{c}$ is a continuous $\mathbb{F}$-local martingale, and the process $Y^{c}$ is a continuous $\mathbb{F}^{\theta}_{[0,T+\delta]}$-local martingale,
\item[(2)] the quadratic variation processes $\langle X^{c} \rangle$ and $\langle Y^{c} \rangle$ are absolutely continuous with respect to the Lebesgue measure on $[0,T+\delta]$, and their Radon-Nikodym derivatives have a locally bounded version, and
\item[(3)] the processes $A$ and $B$ are of finite variation on $[0,T+\delta]$.
\end{itemize}
\end{definition}

\begin{definition}
If $\theta\in(-\delta,0]$ and the two-dimensional process (Y,X) is a regular semimartingale with the lead-lag parameter $-\theta$, then the process $(X,Y)$ is called a regular semimartingale with the lead-lag parameter $\theta\in(-\delta,0]$. 
\end{definition}

Let us set $\mathcal{T}_{n}^{X} = \{ 0=s_{0}^{n} < s_{1}^{n} <\cdots < s_{N_{X}(n)}^{n} =T+\delta \}$ and $\mathcal{T}_{n}^{Y} = \{ 0=t_{0}^{n} < t_{1}^{n} < \cdots < t_{N_{Y}(n)}^{n}=T+\delta \}$ for some positive integers $N_{X}(n)$ and $N_{Y}(n)$. The process $X$ is assumed to be observed at the times in $\mathcal{T}_{n}^{X}$ and the process $Y$ at the times in $\mathcal{T}_{n}^{Y}$. Note that the observation times may be non-synchronous, irregular and random. Since we keep in mind high-frequency data, we consider the asymptotics such that the maximum mesh size of the observation times tend to $0$ as $n\to\infty$:
\[
\lim_{n\to\infty}\max\{|s_{i}^{n}-s_{i-1}^{n}|,\ |t_{j}^{n}-t_{j-1}^{n}| \mid i=1,\ldots,N_{X}(n),\ j=1,\ldots,N_{Y}(n)\} = 0
\]
in probability. They constructed an estimator $\hat{\theta}_{n}$ for $\theta$ by maximizing the \textit{shifted Hayashi-Yoshida covariation contrast} as follows. We define the \textit{shifted Hayashi-Yoshida covariation contrast} $\mathcal{U}_{n}(\tilde{\theta})$ by 
\begin{align*}
\mathcal{U}_{n}(\tilde{\theta}) &= \mathbf{1}_{\tilde{\theta}\geq0} \sum_{i,j:s_{i}^{n}\leq T} (X_{s_{i}^{n}} - X_{s_{i-1}^{n}})(Y_{t_{j}^{n}} - Y_{t_{j-1}^{n}}) \mathbf{1}_{(s_{i-1}^{n},s_{i}^{n}]\cap(t_{j-1}^{n}-\tilde{\theta},t_{j}^{n}-\tilde{\theta}]\neq \emptyset} \\
&\quad + \mathbf{1}_{\tilde{\theta}<0} \sum_{i,j:t_{j}^{n}\leq T} (X_{s_{i}^{n}} - X_{s_{i-1}^{n}})(Y_{t_{j}^{n}} - Y_{t_{j-1}^{n}})\mathbf{1}_{(s_{i-1}^{n},s_{i}^{n}]\cap(t_{j-1}^{n}+\tilde{\theta},t_{j}^{n}+\tilde{\theta}]\neq \emptyset}.
\end{align*}
The lead-lag estimator is constructed by maximizing the contrast $\tilde{\theta} \mapsto |\mathcal{U}_{n}(\tilde{\theta})|$ over a finite subset $\mathcal{G}^{n}$ of $\Theta$. More precisely, the estimator $\hat{\theta}_{n}$ is defined by a solution of the equation
\[
\hat{\theta}_{n} = \max_{\tilde{\theta}\in\mathcal{G}^{n}} |\mathcal{U}_{n}(\tilde{\theta})|.
\]
They proved that the estimator $\hat{\theta}_{n}$ is consistent under appropriate assumptions on $\mathcal{T}_{n}^{X}$, $\mathcal{T}_{n}^{Y}$ and $\mathcal{G}^{n}$ (see \cite{hoffmann2013estimation} for detail).

In standard financial theory, it is assumed that there is no arbitrage in the market. Hence semimartingales, which satisfy this no-arbitrage assumption, are regarded as reasonable class for modeling stock prices. However, non-semimartingales, especially ones involving fractional Brownian motion (fBM), also attract attention as a model of stock prices recently. For example, many researchers investigate how arbitrage opportunities arise and how can we exclude them when we use non-semimartingales as a model of stock prices (\cite{bender2008pricing, cheridito2003arbitrage, guasoni2006no, jarrow2009no, bender2011fractional} and \cite{rogers1997arbitrage} to name but a few). Using fBM in modeling stock prices reflects possible long range dependence property empirically obeserved in some financial time series (for example \cite{cutland1995stock, greene1977long, hall2000semiparametric, henry2002long, lo1991long,  teverovsky1999critical} and \cite{willinger1999stock}). An example of such models is the \textit{fractional Black-Scholes model}. This model assumes that the stock price dynamics are given by
\begin{align}\label{fBS2}
dS_{t} = \mu S_{t}\ dt + \sigma S_{t} dB_{t},\ S_{0}>0,\ t\geq 0
\end{align}
where $\mu$ and $\sigma$ are constants and the process $B=(B_{t})_{t\geq0}$ is an fBM with Hurst parameter $H\in(1/2,1)$. Here the stochastic integral $ \sigma \int_{0}^{t}S_{s}dB_{s} $ can be understood in various ways: if we use the Riemann-Stieltjes integral, then the solution $S^{(1)}$ of the equation (\ref{fBS2}) is
\begin{align*}
S_{t}^{(1)} = S_{0} \exp( \mu t + \sigma B_{t} ),
\end{align*}
and on the other hand, if we use the Wick-It\^{o}-Skorokhod integral, then the solution $S^{(2)}$ of the equation (\ref{fBS2}) becomes
\begin{align*}
S_{t}^{(2)} = S_{0} \exp\left( \mu t - \frac{\sigma^{2}}{2} t^{2H} + \sigma B_{t}  \right)
\end{align*}
(for reference, see \cite{biagini2008stochastic, mishura2008stochastic} or \cite{sottinen2003arbitrage}).

Now a natural question arises: if we consider estimation of the lead-lag parameter between non-semimartingales, especially ones involving fBM, then can we construct a consistent estimator for the lead-lag parameter $\theta$?

Motivated by this quastion, we consider the problem of estimating the lead-lag parameter $\theta$ between two stochastic processes driven by fBMs in this paper. The main aim of this paper is to construct a consistent estimator of the lead-lag parameter $\theta$ with possible convergence rate. 

Although our analysis is based on the framework by \cite{hoffmann2013estimation}, there are different types of frameworks for lead-lag estimation. For example, \cite{robert2010limiting} studied the lead-lag phenomenon using random matrix theory, and \cite{koike2016asymptotic} investigated the asymptotic structure of the likelihood ratio process when the observed processes are correlated Brownian motions with the microstructure noise. 

The rest of this paper is organized as follows. In section 2, we give preliminary results about fractional calculus and stochastic calculus involving fractional Brownian motion, which will be used in subsequent sections of this paper. We also define a covariance structure between two fBMs in Section 2. In section 3, we describe our model. In Section 4, we construct an estimator of the lead-lag parameter $\theta$. We prove its consistency in Section 5. In Section 6, we give an example of our model and a numerical simulation.

\section{Preliminaries}

\subsection{Tools from fractional calculus and stochastic calculus}

In this section, we collect some results about fBM that are used in this paper. Then we introduce a correlation between two fBMs. Let us define a fBM at first.

\begin{definition}
Let $T>0$ be a positive number. A centered Gaussian process $\{B_{t}\}_{0\leq t\leq T}$ is called fractional Brownian motion (abbreviated fBM) of Hurst parameter $H\in(0,1)$ if its covariance function $R(s,t) = \mathrm{E}\{B(t)B(s)\},\ s,t\in[0,T]$ satisfies 
\begin{align}\label{def3}
R(t,s)  = \frac{1}{2}(t^{2H}+s^{2H} - |t-s|^{2H})
\end{align}
for $s,t \in [0,T]$.
\end{definition}

\begin{remark}
In the sequel, we always assume that the Hurst parameter $H$ is greater than $1/2$.
\end{remark}

When we consider lead-lag relationship between time series, it is reasonable to assume that time series are dependent on each other in appropriate sense. In order to introduce a dependent structure between fBMs, we first define correlated BMs and then construct fBMs from them. Therefore we need a representation of fBM via BM.

\begin{proposition}
Let $ W = (W_{t})_{t\in[0,T]} $ be a standard BM. Consider the square integrable kernel
\begin{align*}
K_{H}(t,s) = c_{H}\mathbf{1}_{(0,t)}(s)s^{1/2-H}\int_{s}^{t} (u-s)^{H-3/2}u^{H-1/2}\ du,\ 0\leq t\leq T
\end{align*}
where $ c_{H} = \sqrt{ \frac{H(2H-1)}{B(2-2H,H-1/2)} } $. Then the process $ B=(B_{t})_{t\in[0,T]} $ defined by
\begin{align}\label{def2}
B_{t} = \int_{0}^{T} K_{H}(t,s)\ dW_{s}
\end{align}
is an fBM of Hurst parameter $H$. Here the integral in (\ref{def2}) is interpreted as a Wiener integral.
\end{proposition}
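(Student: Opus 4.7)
The plan is to verify the two defining properties of fractional Brownian motion: that $B$ is a centered Gaussian process, and that its covariance function equals $R(s,t)=\frac{1}{2}(s^{2H}+t^{2H}-|t-s|^{2H})$. First I would check that $K_H(t,\cdot)\in L^2([0,T])$ for each $t$, so that the Wiener integral is well defined; the only potential trouble is the singularity of $(u-s)^{H-3/2}$ as $u\downarrow s$, but for $H>1/2$ the inner integral behaves like $(t-s)^{H-1/2}$ near the upper endpoint $s\uparrow t$, giving a square-integrable kernel on $(0,t)$. Then $B_t$ is automatically centered Gaussian, and the whole process is Gaussian because any finite linear combination $\sum a_i B_{t_i}$ equals the Wiener integral of $\sum a_i K_H(t_i,\cdot)$.

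By the It\^o isometry for Wiener integrals, the covariance reduces to
\[
E[B_sB_t] \;=\; \int_0^T K_H(t,u)K_H(s,u)\,du \;=\; \int_0^{s\wedge t} K_H(t,u)K_H(s,u)\,du,
\]
where the indicator $\mathbf{1}_{(0,\cdot)}$ restricts the support. The core task is therefore the identity
\[
\int_0^{s\wedge t} K_H(t,u)K_H(s,u)\,du \;=\; \tfrac{1}{2}\bigl(s^{2H}+t^{2H}-|t-s|^{2H}\bigr).
\]
Assuming $s\le t$ by symmetry, I would differentiate both sides in $t$. On the right one gets $H\bigl(t^{2H-1}-(t-s)^{2H-1}\bigr)$. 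On the left, since $K_H(t,t)=0$, Leibniz's rule and the explicit form $\partial_t K_H(t,u)=c_H\,u^{1/2-H}(t-u)^{H-3/2}t^{H-1/2}$ yield
\[
\partial_t\int_0^s K_H(t,u)K_H(s,u)\,du \;=\; c_H\, t^{H-1/2}\int_0^s K_H(s,u)\,u^{1/2-H}(t-u)^{H-3/2}\,du.
\]
Substituting the definition of $K_H(s,u)$, applying Fubini, and performing a suitable rescaling (e.g.\ $u=s+\lambda(t-s)$ or a standard Beta-function change of variables) reduces the inner integral to the Beta function $B(2-2H,H-1/2)$; the normalizing constant $c_H^2=H(2H-1)/B(2-2H,H-1/2)$ is chosen precisely to collapse all factors to $H(t^{2H-1}-(t-s)^{2H-1})$. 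A matching boundary check at $t=s$ (where both sides equal $s^{2H}$, again a Beta computation) finishes the identification.

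The main obstacle is the combinatorial bookkeeping in these Beta-integral manipulations, in particular tracking the fractional exponents and verifying that $c_H$ is exactly the right normalizer. A more conceptual alternative I would consider is to recognize $s^{H-1/2}K_H(t,s)$ as (up to constant) a Riemann--Liouville fractional integral $I_{t-}^{H-1/2}$ applied to $u\mapsto u^{H-1/2}$; the covariance identity then follows from the semigroup/adjointness properties of fractional integrals composed with the weight $u^{1-2H}$, which is a standard computation in the fractional-calculus literature (see \cite{mishura2008stochastic}) and avoids ad hoc substitutions.
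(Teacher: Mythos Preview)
Your proposal is correct, but the route differs from the paper's. The paper defers the covariance computation to its Proposition~\ref{fBM2} (stated for two possibly different Hurst indices $H_1,H_2$) and then specializes via Corollary~\ref{corollary3}. Concretely, the paper expands $\int_0^{s\wedge t}K_{H}(t,r)K_{H}(s,r)\,dr$ directly, applies Fubini to obtain a triple integral, and evaluates the innermost integral $\int_0^{u\wedge v} r^{1-2H}(u-r)^{H-3/2}(v-r)^{H-3/2}\,dr$ by the substitutions $z=(v-r)/(u-r)$ and $x=v/(uz)$, landing on $H(2H-1)\int_0^t\int_0^s|u-v|^{2H-2}\,du\,dv$, which is the standard integral form of the fBM covariance. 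Your approach instead differentiates in $t$ first, which collapses one of the two inner integrals before applying Fubini; this is legitimate and leads to the same Beta identity in one fewer variable, but it costs you a separate boundary verification at $t=s$ (i.e.\ $\|K_H(s,\cdot)\|_{L^2}^2=s^{2H}$), which is itself a computation of the same flavor. So the net work is comparable. The paper's direct route has the advantage that it yields the two-parameter covariance formula~(\ref{calc4}) needed throughout the rest of the argument, whereas your derivative-matching and the fractional-integral alternative are tailored to the single-$H$ case.
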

\begin{proof}
Since the process $B$ is clearly a centered Gaussian process, it suffices to show (\ref{def3}). Later we calculate $R(t,s)$ in a more general setting and (\ref{def3}) is obtained as a corollary, see Proposition \ref{fBM2} and Corollary \ref{corollary3}.
\end{proof}

Let $\mathcal{E}$ be the set of step functions on $[0,T]$. Let us consider the linear operator $ K_{H}^{\ast}\colon\mathcal{E}\to L^{2}([0,T]) $ defined by
\begin{align*}
(K_{H}^{\ast}\varphi)(s) = \int_{s}^{T} \varphi(t)\frac{\partial}{\partial{t}}K_{H}(t,s)\ dt. 
\end{align*}
Note that 
\begin{align*}
(K_{H}^{\ast}\mathbf{1}_{[0,t]})(s) = K_{H}(t,s)\mathbf{1}_{[0,t]}(s).
\end{align*}
Let $ \mathcal{H}_{H} $ be the completion of $\mathcal{E}$, with respect to the inner product
\begin{align*}
\nonumber\langle f,g \rangle_{\mathcal{H}_{H}} &= H(2H-1) \int_{0}^{T}dr\int_{0}^{T}du\ f(r)g(u)|r-u|^{2H-2} \\
&= \langle K_{H}^{\ast}f , K_{H}^{\ast}g \rangle_{L^{2}([0,T])}.
\end{align*}
The operator $K_{H}^{\ast}$ can be extended to an isometry between $\mathcal{H}_{H}$ and $L^{2}([0,T])$, and the extension is also denoted by $K_{H}^{\ast}$. We set $ \| f \|^{2}_{\mathcal{H}} = \langle f,f \rangle_{\mathcal{H}} $ for $ f\in\mathcal{H} $. The space $\mathcal{H}_{H}$ is isomorphic to a Gaussian Hilbert space
\begin{align*}
\mathfrak{H}_{H} := \overline{ \mathrm{span}\left\{ B_{t} ; t\in [0,T] \right\} }^{L^{2}(\mathbb{P})}.
\end{align*}
We denote the isomorphism between $ \mathcal{H}_{H} $ and $ \mathfrak{H}_{H} $ by $ B $. For proofs of these results, see \cite{nualart2006malliavin}.

In order to prove Theorem \ref{theorem1}, we need the notion of multiple Wiener integral and its properties. Let us collect some basic facts concerning multiple Wiener integral which will be used in subsequent sections of this paper.

\begin{definition}
Let $\mathcal{H}$ be a real separable Hilbert space. A centered Gaussian family of random variables $(W(h))_{h\in\mathcal{H}}$ indexed by $\mathcal{H}$ is said to be an isonormal Gaussian process over $\mathcal{H}$ if the covariance function $ \mathrm{E}\{W(g)W(h)\} = \langle g,h  \rangle_{\mathcal{H}} $ for $g,h\in\mathcal{H}$.
\end{definition}

\begin{example}\label{BM1} 
Let $\mathcal{H} = L^{2}([0,T];\mathbb{R}^{2})$ with an inner product 
\begin{align*}
 \langle (f_{1},g_{1}) , (f_{2},g_{2}) \rangle_{\mathcal{H}} = \langle f_{1},g_{1} \rangle_{L^{2}([0,T])} + \langle f_{2},g_{2} \rangle_{L^{2}([0,T])}.
\end{align*}
Let $ \tilde{W} = ( \tilde{W}^{1}, \tilde{W}^{2} ) $ be a two-dimensional BM. Then 
\begin{align*}
W((f,g)) := \int_{0}^{T} f(t)\ d\tilde{W}_{t}^{1} + \int_{0}^{T} g(t)\ d\tilde{W}^{2}_{t}
\end{align*}
is an isonormal Gaussian process over $\mathcal{H}$. 
\end{example}

Let $ p $ be a positive integer and $ h \in \mathcal{H}^{\tilde{\otimes}p} $, where $\mathcal{H}^{\tilde{\otimes}p}$ is the $p$th symmetric tensor power of $\mathcal{H}$. After defining an isonormal Gaussian process over $\mathcal{H}$, we can consider the $p$th multiple Wiener integral $\mathbb{I}_{p}(h)$. Note that $ \mathbb{I}_{1}(f) $ coincides with $W(f)$ for $f\in\mathcal{H}$. A detailed discussion about multiple Wiener integral can be found in, for example, Chapter 2 of \cite{nourdin2012normal}. The following properties of multiple Wiener integral are useful. 

\begin{proposition}[Isometry property]\label{proposition3}
Let $p $ and $q$ be positive integers with $1\leq p\leq q$. For $ f \in \mathcal{H}^{\tilde{\otimes}p} $ and $ g \in \mathcal{H}^{\tilde{\otimes}p} $, we have 
\[
\mathrm{E}\{ \mathbb{I}_{p}(f) \mathbb{I}_{q}(g) \} = \begin{cases} 
p!\langle f,g \rangle_{\mathcal{H}^{\otimes p}} & \text{if}\ p=q, \\
0 & \text{else}.
\end{cases}
\] 
\end{proposition}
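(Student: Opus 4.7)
My plan is to follow the classical proof of the isometry for multiple Wiener integrals (as developed in Chapter 2 of Nourdin--Peccati, which the paper already cites): establish the identity first on a dense subset of elementary symmetric tensors built from an orthonormal basis of $\mathcal{H}$, and then extend by bilinearity and continuity. Since the paper treats $\mathbb{I}_p$ as a known object, the proof is essentially a verification at the level of its definition.

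Fix an orthonormal basis $\{e_k\}_{k\geq 1}$ of $\mathcal{H}$. The symmetric tensors $e_{i_1}\tilde{\otimes}\cdots\tilde{\otimes}e_{i_p}$ with $i_1<\cdots<i_p$ are total in $\mathcal{H}^{\tilde{\otimes}p}$, and by the definition of the multiple Wiener integral,
\[
\mathbb{I}_p(e_{i_1}\tilde{\otimes}\cdots\tilde{\otimes}e_{i_p}) = W(e_{i_1})\cdots W(e_{i_p}),
\]
where the random variables $W(e_k)$ are independent standard Gaussians. By bilinearity of both sides of the desired identity, it therefore suffices to verify the isometry for two such elementary tensors $f=e_{i_1}\tilde{\otimes}\cdots\tilde{\otimes}e_{i_p}$ and $g=e_{j_1}\tilde{\otimes}\cdots\tilde{\otimes}e_{j_q}$.

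For such $f,g$, the product $\mathbb{I}_p(f)\mathbb{I}_q(g)$ is a monomial in the i.i.d.\ $N(0,1)$ family $\{W(e_k)\}$, so the expectation factorises over the indices and vanishes unless every $e_k$ appearing in the monomial occurs with even multiplicity. Because the $i_r$ are distinct among themselves and the $j_s$ are distinct among themselves, each index appears with total multiplicity one or two, and even multiplicity forces $\{i_1,\ldots,i_p\}=\{j_1,\ldots,j_q\}$ as sets, which in particular forces $p=q$. This disposes of the case $p\neq q$. When $p=q$ and the index sets coincide, the expectation equals $\prod_{r=1}^p \mathrm{E}[W(e_{i_r})^2]=1$; otherwise it equals $0$.

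To match the right-hand side, one expands the symmetric tensor $e_{i_1}\tilde{\otimes}\cdots\tilde{\otimes}e_{i_p}=\frac{1}{p!}\sum_{\pi\in S_p}e_{i_{\pi(1)}}\otimes\cdots\otimes e_{i_{\pi(p)}}$ and uses $\langle e_k,e_\ell\rangle_{\mathcal{H}}=\delta_{k\ell}$ to obtain
\[
\langle e_{i_1}\tilde{\otimes}\cdots\tilde{\otimes}e_{i_p},\,e_{j_1}\tilde{\otimes}\cdots\tilde{\otimes}e_{j_p}\rangle_{\mathcal{H}^{\otimes p}}=\frac{1}{p!}\,\mathbf{1}_{\{i_1,\ldots,i_p\}=\{j_1,\ldots,j_p\}},
\]
so that $p!\langle f,g\rangle_{\mathcal{H}^{\otimes p}}$ agrees with $\mathrm{E}[\mathbb{I}_p(f)\mathbb{I}_q(g)]$ in every case. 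A standard density argument then extends the identity to all $f,g$ in the symmetric tensor powers. The only real obstacle is the combinatorial bookkeeping with permutations and indices; no deep tool beyond independence of $\{W(e_k)\}$ and orthonormality is required.
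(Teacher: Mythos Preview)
The paper does not supply its own proof of this proposition; it simply refers the reader to Proposition 2.7.5 of Nourdin--Peccati. So there is no paper argument to compare against, only your proof to assess on its own.

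Your argument has a genuine gap: the symmetric tensors $e_{i_1}\tilde{\otimes}\cdots\tilde{\otimes}e_{i_p}$ with strictly increasing (hence pairwise distinct) indices are \emph{not} total in $\mathcal{H}^{\tilde{\otimes}p}$ once $p\geq 2$. For instance, $e_1^{\otimes 2}\in\mathcal{H}^{\tilde{\otimes}2}$ is orthogonal to every $e_i\tilde{\otimes}e_j$ with $i<j$, so it cannot be approximated by linear combinations of the tensors you consider. The density step therefore fails. Relatedly, the formula $\mathbb{I}_p(e_{i_1}\tilde{\otimes}\cdots\tilde{\otimes}e_{i_p})=W(e_{i_1})\cdots W(e_{i_p})$ you invoke is only correct for distinct indices; for repeated ones one has, e.g., $\mathbb{I}_2(e_1^{\otimes 2})=W(e_1)^2-1=H_2(W(e_1))$, not $W(e_1)^2$.

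The standard repair---and this is exactly what Nourdin--Peccati do---is to work with the full family of symmetrized basis tensors $e(\mathbf{a})=\mathrm{sym}(e_1^{\otimes a_1}\otimes e_2^{\otimes a_2}\otimes\cdots)$ indexed by multi-indices $\mathbf{a}=(a_1,a_2,\ldots)$ with $|\mathbf{a}|=\sum_k a_k=p$, for which $\mathbb{I}_p(e(\mathbf{a}))=\prod_k H_{a_k}(W(e_k))$ with $H_n$ the $n$-th Hermite polynomial. Orthogonality across different chaoses and within a fixed chaos then follows from the independence of the $W(e_k)$ together with $\mathrm{E}[H_m(\xi)H_n(\xi)]=n!\,\delta_{mn}$ for standard Gaussian $\xi$, and the combinatorial factor $\mathbf{a}!=\prod_k a_k!$ matches $p!\,\|e(\mathbf{a})\|_{\mathcal{H}^{\otimes p}}^2$. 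The argument is no harder than what you wrote---it just replaces plain monomials by Hermite polynomials so as to cover the diagonal tensors you omitted.
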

\begin{proof}
For example, see Proposition 2.7.5 of \cite{nourdin2012normal}.
\end{proof}

\begin{theorem}[Product formula]\label{theorem2}
Let $p,q \geq 1$ be positive integers. For $ f \in \mathcal{H}^{\tilde{\otimes}p} $ and $ g \in \mathcal{H}^{\tilde{\otimes}p} $, we have 
\[
\mathbb{I}_{p}(f)\mathbb{I}_{q}(g) = \sum_{r=0}^{p\wedge q} r! \left( \begin{array}{cc}p\\ r\end{array} \right)\left( \begin{array}{cc}q\\ r\end{array} \right)\mathbb{I}_{p+q-2r} (f \tilde{\otimes}_{r} g),
\]
where $f \tilde{\otimes}_{r} g$ denotes the symmetrization of the $r$th contraction of $f$ and $g$ (for a detail, see Appendix B of \cite{nourdin2012normal}). In particular, if $p=q=1$, then
\[
\mathbb{I}_{1}(f)\mathbb{I}_{1}(g) = \langle f,g \rangle_{\mathcal{H}} + \mathbb{I}_{2}(f \tilde{\otimes} g),
\]
where $f\tilde{\otimes} g = (1/2)(f\otimes g + g \otimes f)$.
\end{theorem}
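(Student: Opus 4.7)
The plan is to reduce the identity to the classical product formula for Hermite polynomials. Fix an orthonormal basis $\{e_k\}_{k\geq1}$ of $\mathcal{H}$ and, for each multi-index $\alpha=(\alpha_1,\alpha_2,\ldots)$ of nonnegative integers with $|\alpha|:=\sum_k\alpha_k=p$, denote by $\tilde e_\alpha \in \mathcal{H}^{\tilde\otimes p}$ the symmetrization of $\bigotimes_k e_k^{\otimes\alpha_k}$. Appropriately normalized, these $\tilde e_\alpha$ form an orthonormal basis of $\mathcal{H}^{\tilde\otimes p}$, so by the isometry property of Proposition~\ref{proposition3} and bilinearity in $(f,g)$, it suffices to establish the identity for $f=\tilde e_\alpha$ with $|\alpha|=p$ and $g=\tilde e_\beta$ with $|\beta|=q$, and then extend by density and $L^2$-continuity.

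The key fact I would use is the identification
\[
\mathbb{I}_{|\alpha|}(\tilde e_\alpha) \;=\; c_\alpha \prod_{k} H_{\alpha_k}\!\bigl(W(e_k)\bigr),
\]
where $H_n$ is the probabilist's $n$th Hermite polynomial and $c_\alpha$ is an explicit combinatorial constant. Since $\{W(e_k)\}_k$ are i.i.d.\ standard Gaussians, the product $\mathbb{I}_p(\tilde e_\alpha)\,\mathbb{I}_q(\tilde e_\beta)$ becomes a finite product of one-variable products $H_{\alpha_k}(W(e_k))\,H_{\beta_k}(W(e_k))$, to which the univariate identity
\[
H_m(x)H_n(x)=\sum_{r=0}^{m\wedge n} r!\binom{m}{r}\binom{n}{r}H_{m+n-2r}(x)
\]
applies coordinatewise. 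Expanding this product and regrouping by total Hermite degree $p+q-2r$ produces a sum of contracted tensors, and the bulk of the work is to verify that this sum equals $r!\binom{p}{r}\binom{q}{r}\,\mathbb{I}_{p+q-2r}(\tilde e_\alpha \,\tilde\otimes_r\, \tilde e_\beta)$. Concretely, one must check that the number of coordinatewise pairings $(r_k)$ with $\sum_k r_k = r$, weighted by $\prod_k r_k!\binom{\alpha_k}{r_k}\binom{\beta_k}{r_k}$, reproduces the multinomial factor arising in the $r$-contraction.

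This combinatorial bookkeeping is the main obstacle; a cleaner but less elementary alternative is to work in the Fock-space picture and deduce the formula from the canonical commutation relations for creation and annihilation operators, where the identity is just the statement of Wick normal ordering. The special case $p=q=1$ bypasses all of this: one only needs $H_1(x)H_1(y)=xy$ for $x\neq y$ and $H_1(x)^2=H_2(x)+1$, which after polarization yields $\mathbb{I}_1(f)\mathbb{I}_1(g) = \langle f,g\rangle_{\mathcal{H}} + \mathbb{I}_2(f\,\tilde\otimes\, g)$ directly, as stated.
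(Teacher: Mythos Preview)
Your sketch is a sound outline of the standard proof of the multiplication formula for multiple Wiener--It\^o integrals: reduce to symmetric tensors built from an orthonormal basis, identify $\mathbb{I}_{|\alpha|}(\tilde e_\alpha)$ with a product of Hermite polynomials in the i.i.d.\ Gaussians $W(e_k)$, apply the one-variable Hermite product rule coordinatewise, and regroup. The combinatorial identity you flag as ``the main obstacle'' is exactly the heart of the argument, and your treatment of the case $p=q=1$ is correct.

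However, the paper does not actually prove this statement at all: its proof consists of a single sentence referring the reader to Theorem~2.7.10 of Nourdin and Peccati, \emph{Normal Approximations with Malliavin Calculus}. So there is nothing to compare at the level of argument; what you have written is essentially a sketch of the proof one finds in that reference (or in Nualart's book), whereas the paper simply quotes the result as a known tool. Your contribution is strictly more than what the paper provides.
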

\begin{proof}
For example, see Theorem 2.7.10 of \cite{nourdin2012normal}.
\end{proof}

\begin{theorem}[Hypercontractivity]\label{theorem3}
Let $q>0$ be a positive number, $p$ be a positive integer, and $h \in \mathcal{H}^{\tilde{\otimes}p}$. Then there exists a constant $C(p,q)\in(0,\infty)$ depending only on $p$ and $q$ such that
\[
\mathrm{E}\{ | \mathbb{I}_{p}(h) |^{q} \}^{1/q} \leq C(p,q) \mathrm{E}\{ | \mathbb{I}_{p}(h) |^{2} \}^{1/2}.
\]
\end{theorem}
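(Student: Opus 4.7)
The plan is to deduce the inequality from Nelson's hypercontractivity theorem for the Ornstein--Uhlenbeck semigroup on Wiener space, which is the standard route (the result is, after all, cited from \cite{nourdin2012normal}). Concretely, let $(P_t)_{t\geq 0}$ denote the OU semigroup associated with the isonormal Gaussian process $W$. Two facts do all the work. First, $P_t$ acts diagonally on the Wiener chaos decomposition: for every $h\in\mathcal{H}^{\tilde{\otimes}p}$,
\[
P_t\,\mathbb{I}_p(h) = e^{-pt}\,\mathbb{I}_p(h).
\]
Second, Nelson's theorem asserts that $P_t\colon L^2(\Omega)\to L^{q}(\Omega)$ is a contraction whenever $e^{-2t}\leq (q-1)^{-1}$, i.e.\ for $t\geq t_q:=\tfrac{1}{2}\log(q-1)$ and $q\geq 2$.

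Assume first that $q\geq 2$. Applying Nelson's theorem with $t=t_q$ to $F=\mathbb{I}_p(h)$ and using the diagonal action,
\[
e^{-p t_q}\,\mathrm{E}\{|\mathbb{I}_p(h)|^{q}\}^{1/q}
= \|P_{t_q}\mathbb{I}_p(h)\|_{L^q}
\leq \|\mathbb{I}_p(h)\|_{L^2}
= \mathrm{E}\{|\mathbb{I}_p(h)|^{2}\}^{1/2},
\]
so that
\[
\mathrm{E}\{|\mathbb{I}_p(h)|^{q}\}^{1/q}
\leq e^{p t_q}\,\mathrm{E}\{|\mathbb{I}_p(h)|^{2}\}^{1/2}
= (q-1)^{p/2}\,\mathrm{E}\{|\mathbb{I}_p(h)|^{2}\}^{1/2},
\]
which gives the claim with $C(p,q)=(q-1)^{p/2}$. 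For $0<q<2$, the inequality is softer: by Jensen (or H\"older with the pair $2/q$, $2/(2-q)$ applied to $|\mathbb{I}_p(h)|^{q}\cdot 1$), $\mathrm{E}\{|\mathbb{I}_p(h)|^{q}\}^{1/q}\leq \mathrm{E}\{|\mathbb{I}_p(h)|^{2}\}^{1/2}$, so one may take $C(p,q)=1$.

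The only real obstacle is Nelson's hypercontractivity itself, but I would simply invoke it as a black box (it is proved e.g.\ in Chapter~2 of \cite{nourdin2012normal}); once one has the semigroup picture, the reduction above is automatic and uses nothing specific to $p$ beyond the eigenvalue $e^{-pt}$ on the $p$-th chaos. An alternative, if one wishes to avoid the semigroup entirely, is to prove the result first for elementary chaoses $\mathbb{I}_p(e_{i_1}\tilde{\otimes}\cdots\tilde{\otimes}e_{i_p}) = H_p(W(e_i))$ (Hermite polynomials in a single Gaussian) via a direct moment computation, and then pass to general $h$ by the density of such elementary tensors in $\mathcal{H}^{\tilde{\otimes}p}$ together with the isometry property from Proposition~\ref{proposition3}; but the OU-semigroup route is cleaner and yields the sharp constant $(q-1)^{p/2}$.
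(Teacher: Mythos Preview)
Your argument is correct and is exactly the standard proof via Nelson's hypercontractivity for the Ornstein--Uhlenbeck semigroup; the constant $(q-1)^{p/2}$ for $q\geq 2$ and the Jensen/H\"older reduction for $0<q<2$ are both fine. The paper itself gives no argument for this theorem and simply refers the reader to Theorem~2.7.2 of \cite{nourdin2012normal}, so your proposal in fact supplies precisely the details that the paper outsources to that reference.
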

\begin{proof}
For example, see Theorem 2.7.2 of \cite{nourdin2012normal}.
\end{proof}

\subsection{Correlated fBMs}\label{section2}

Let $(\Omega,\mathcal{F},\mathbb{P})$ be a stochastic basis supporting a two dimensional standard Brownian motion $ W=(\tilde{W}^{1},\tilde{W}^{2}) $. Here we assume that the $\sigma$-field $ \mathcal{F} $ is complete and the filtration $\mathbb{G}$ satisfies the usual conditions. For $\rho\in[-1,1]$, we set 
\begin{align*}
\begin{cases}
W^{1}_{t} = \tilde{W}_{t}^{1} , \\
W^{2}_{t} = \rho \tilde{W}_{t}^{1} + \sqrt{1-\rho^{2}} \tilde{W}_{t}^{2},
\end{cases}
\end{align*}
for $t\geq0$. Then the processes $W^{1}$ and $W^{2}$ are Brownian motions satisfying
\begin{align*}
\langle W^{1}, W^{2} \rangle_{t} = \rho t,\ t\geq0.
\end{align*}
We define correlated fractional Brownian motions on an interval $[0,T]$ by
\begin{align}\label{fBM1} 
\begin{cases}
\tilde{B}^{1}_{t} = \int_{0}^{T} K_{H_{1}}(t,s)\ dW^{1}_{s}, \\
\tilde{B}^{2}_{t} = \int_{0}^{T} K_{H_{2}}(t,s)\ dW^{2}_{s},
\end{cases}
\end{align}
for $H_{1},H_{2} \in (1/2,1)$. Here the integrals in (\ref{fBM1}) are understood in the Wiener sense. In terms of an isonormal Gaussian process, we can write (\ref{fBM1}) as 
\begin{align} \label{fBM3}
\begin{cases}
\tilde{B}^{1}_{t} = W( ( K_{H_{1}}^{\ast}\mathbf{1}_{[0,t]} , 0 ) ), \\
\tilde{B}^{2}_{t} = W( ( \rho K_{H_{2}}^{\ast}\mathbf{1}_{[0,t]} , \sqrt{1-\rho^{2}} K_{H_{2}}^{\ast}\mathbf{1}_{[0,t]} ) )
\end{cases}
\end{align}
(see Example \ref{BM1}). For given subinterval $ I \subset [0,T] $, we set 
\begin{align}\label{def1} 
\begin{cases}
h^{1}(I) = ( K_{H_{1}}^{\ast}\mathbf{1}_{I} , 0 ) , \\
h^{2}(I) = ( \rho K_{H_{2}}^{\ast}\mathbf{1}_{I} , \sqrt{1-\rho^{2}} K_{H_{2}}^{\ast}\mathbf{1}_{I} ) .
\end{cases}
\end{align}
Note that since the kernel $ K_{H_{i}}(t,s) $ is a Volterra kernel, i.e., $K_{H_{i}}(t,s)=0$ if $s\in[t,T]$, the process $\tilde{B}^{i}$ is $\mathbb{G}$-adapted. We can take a modification of the process $ \tilde{B}^{i} $ which has $ H_{i}-\epsilon $-H\"{o}lder continuous trajectory for each $\epsilon\in(0,H_{i})$ and is denoted by $ {B}^{i} $. The covariance between $B^{1}$ and $B^{2}$ are given by (\ref{calc4}) (see Proposition \ref{fBM2} below).

\begin{proposition} \label{fBM2} 
Let $ B^{1}_{t}=(B^{1}_{t})_{t\in[0,T]} $ and $ B^{2} = (B^{2}_{t})_{t\in[0,T]} $ be fractional Brownian motions on $[0,T]$ defined as above. Then we have
\begin{align} \label{calc4} 
\mathbb{E}\{ B_{t}^{1}B_{s}^{2} \} = \rho c_{H_{1}}c_{H_{2}} \int_{0}^{t}du\int_{0}^{s}dv\ \beta(u,v)|u-v|^{H_{1}+H_{2}-2} u^{H_{1}-H_{2}} v^{H_{2}-H_{1}},
\end{align}
where
\begin{align*}
\beta(u,v)= 
\begin{cases}
B(H_{1}-1/2, 2-H_{1}-H_{2}) & \text{if}\ u\leq v, \\
B(H_{2}-1/2,2-H_{1}-H_{2}) & \text{if}\ v < u.
\end{cases}
\end{align*}
Here $B(a,b)$ denotes the Beta function.
\end{proposition}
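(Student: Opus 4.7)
My plan is to reduce the identity to a one-variable integral evaluation by differentiating $\mathbb{E}\{B_t^1 B_s^2\}$ in $t$ and $s$, and then computing the resulting kernel integral in closed form via Euler's hypergeometric integral. The starting point is to use (\ref{fBM3}) together with the defining property of the isonormal Gaussian process $W$ over $\mathcal{H} = L^2([0,T];\mathbb{R}^2)$ to write
\[
\mathbb{E}\{B_t^1 B_s^2\} = \langle h^1([0,t]), h^2([0,s]) \rangle_{\mathcal{H}} = \rho \int_0^{t \wedge s} K_{H_1}(t,r) K_{H_2}(s,r) \, dr,
\]
with the cross-term vanishing because the second coordinate of $h^1([0,t])$ is zero. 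Since the right-hand side of (\ref{calc4}) is a double integral over $[0,t] \times [0,s]$ that vanishes whenever $t=0$ or $s=0$, it is equivalent to match the mixed partial $\partial_t \partial_s$ on both sides.

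From $K_H(t,s) = c_H s^{1/2-H} \int_s^t (u-s)^{H-3/2} u^{H-1/2}\,du$ one reads off $\partial_t K_H(t,r) = c_H r^{1/2-H}(t-r)^{H-3/2} t^{H-1/2}$, and the boundary term generated by moving the $t\wedge s$ cutoff drops out because $K_{H_1}(t,t) = 0$. Differentiation under the integral then gives
\[
\partial_t \partial_s \mathbb{E}\{B_t^1 B_s^2\} = \rho c_{H_1} c_{H_2}\, t^{H_1-1/2} s^{H_2-1/2} \int_0^{t \wedge s} r^{1-H_1-H_2} (t-r)^{H_1-3/2} (s-r)^{H_2-3/2}\, dr,
\]
so matching with the integrand of (\ref{calc4}) reduces the proposition to the single identity
\[
\int_0^{t \wedge s} r^{1-H_1-H_2} (t-r)^{H_1-3/2} (s-r)^{H_2-3/2}\, dr = \beta(t,s)\, |t-s|^{H_1+H_2-2}\, t^{1/2-H_2}\, s^{1/2-H_1}.
\]

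The main technical step is this evaluation, and it is where I expect the only real obstacle. Assuming $t \leq s$ for definiteness (the opposite case is symmetric and yields the other branch of $\beta$), the substitution $r = tx$ with $x \in [0,1]$ casts the integral as
\[
t^{1/2-H_2} s^{H_2-3/2} \int_0^1 x^{(2-H_1-H_2)-1} (1-x)^{(H_1-1/2)-1} \left(1 - \tfrac{t}{s}\, x\right)^{H_2-3/2} dx.
\]
Euler's integral representation identifies the remaining $x$-integral with $B(2-H_1-H_2,\,H_1-1/2)\cdot {}_2F_1(3/2-H_2,\, 2-H_1-H_2;\, 3/2-H_2;\, t/s)$; the Beta parameters are positive because $H_1 > 1/2$ and $H_1 + H_2 < 2$. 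The crucial coincidence $a = c = 3/2 - H_2$ lets me collapse the hypergeometric via ${}_2F_1(a,b;a;z) = (1-z)^{-b}$ to $(1-t/s)^{H_1+H_2-2}$, and collecting powers of $s$ yields exactly $B(H_1-1/2,\, 2-H_1-H_2)\,(s-t)^{H_1+H_2-2}\, t^{1/2-H_2}\, s^{1/2-H_1}$.

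Finally, integrating the resulting mixed-derivative identity over $(0,t) \times (0,s)$ and invoking the boundary values $\mathbb{E}\{B_0^1 B_s^2\} = \mathbb{E}\{B_t^1 B_0^2\} = 0$ reconstructs (\ref{calc4}). Every step except the hypergeometric collapse is routine; without the special coincidence of parameters coming from the particular form of $K_H$, one would be left with a genuine ${}_2F_1$ rather than an elementary function, and the clean $\beta(u,v)$ factor would be lost.
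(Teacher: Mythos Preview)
Your proof is correct and follows essentially the same approach as the paper: both start from $\mathbb{E}\{B_t^1 B_s^2\} = \rho\int_0^{t\wedge s} K_{H_1}(t,r)K_{H_2}(s,r)\,dr$ and reduce to the identical inner integral $\int_0^{u\wedge v} r^{1-H_1-H_2}(u-r)^{H_1-3/2}(v-r)^{H_2-3/2}\,dr$. The only cosmetic differences are that the paper reaches this integral by expanding the kernels and applying Fubini (rather than by differentiating in $t,s$ and integrating back), and evaluates it via the substitutions $z=(v-r)/(u-r)$, $x=v/(uz)$ instead of your Euler--hypergeometric route; your observation that the collapse ${}_2F_1(a,b;a;z)=(1-z)^{-b}$ is what makes the answer elementary is a nice structural explanation of why the paper's ad hoc substitutions succeed.
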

\begin{proof}
We have
\begin{align}\label{calc1} 
\nonumber \mathbb{E}\{ B_{t}^{1} B_{s}^{2} \} &= \rho\int_{0}^{T} K_{H_{1}}(t,r)K_{H_{2}}(s,r)\ dr \\
\nonumber &= \rho c_{H_{1}} c_{H_{2}} \int_{0}^{t\wedge s}dr\ r^{1-H_{1}-H_{2}} \\ 
\nonumber & \quad \times\int_{r}^{t}du\ (u-r)^{H_{1}-3/2}u^{H_{1}-1/2} \int_{r}^{s}dv\ (v-r)^{H_{2}-2/3} v^{H_{2}-1/2} \\
\nonumber &= \rho c_{H_{1}} c_{H_{2}} \int_{0}^{t}du\ \int_{0}^{s}dv\ u^{H_{1}-1/2}v^{H_{2}-1/2}
\\& \quad \times \int_{0}^{u\wedge v}dr\ r^{1-H_{1}-H_{2}} (u-r)^{H_{1}-3/2} (v-r)^{H_{2}-3/2}.
\end{align}
Suppose that $ u\leq v $. By the change of variables $ z=\frac{v-r}{u-r} $ and $ x=\frac{v}{uz} $, we obtain 
\begin{align}\label{calc2} 
\nonumber & \int_{0}^{u}dr\ r^{1-H_{1}-H_{2}} (u-r)^{H_{1}-3/2} (v-r)^{H_{2}-3/2} \\ 
\nonumber &= (v-u)^{H_{1}+H_{2}-2}\int_{v/u}^{\infty}dz\ (zu-v)^{1-H_{1}-H_{2}} z^{H_{2}-3/2} \\
&= (v-u)^{H_{1}+H_{2}-2}u^{1/2-H_{2}}v^{1/2-H_{1}} B(H_{1}-1/2,2-H_{1}-H_{2}).
\end{align}
The case where $ v<u $ can be obtained similarly:
\begin{align}\label{calc3} 
\nonumber \int_{0}^{v}dr\ r^{1-H_{1}-H_{2}} (u-r)^{H_{1}-3/2} (v-r)^{H_{2}-3/2} &= (u-v)^{H_{1}+H_{2}-2}u^{1/2-H_{2}}v^{1/2-H_{1}} \\
& \quad \times B(H_{2}-1/2,2-H_{1}-H_{2}).
\end{align}
Plugging (\ref{calc1}) and (\ref{calc2}) into (\ref{calc3}), we complete the proof.
\end{proof}

\begin{corollary}\label{corollary3}
Assume that $H_{1}=H_{2}=H$ and $\rho=1$. Then we have 
\begin{align*}
\nonumber \mathbb{E}\{ {B}_{t}^{1} {B}_{s}^{1} \} &= c_{H}^{2} B(H-1/2,2-2H) \int_{0}^{t}du\int_{0}^{s}dv\ |u-v|^{2H-2} \\
&= \frac{1}{2}(t^{2H} + s^{2H} - |t-s|^{2H}).
\end{align*}
\end{corollary}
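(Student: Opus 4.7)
The plan is to deduce both equalities from Proposition \ref{fBM2} together with elementary manipulations of the Beta function and the covariance kernel $R(t,s)$ of a single fBM.

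For the first equality, I would simply specialize Proposition \ref{fBM2} to $H_{1}=H_{2}=H$ and $\rho=1$. Under this choice the factors $u^{H_{1}-H_{2}}$ and $v^{H_{2}-H_{1}}$ both equal $1$, and the piecewise definition of $\beta(u,v)$ collapses to the single constant $B(H-1/2,\,2-2H)$, so the double integral in \eqref{calc4} immediately produces the first line of the corollary.

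For the second equality, my plan is to show that the constant in front of the integral is exactly $H(2H-1)$ and then to recognize the integrand as a mixed second partial derivative. Recalling that $c_{H}^{2}=H(2H-1)/B(2-2H,\,H-1/2)$ and using the symmetry $B(a,b)=B(b,a)$ of the Beta function, I obtain $c_{H}^{2}\,B(H-1/2,\,2-2H)=H(2H-1)$. I then claim that for $u\neq v$
\[
\frac{\partial^{2}}{\partial u\,\partial v}\Bigl[\tfrac{1}{2}\bigl(u^{2H}+v^{2H}-|u-v|^{2H}\bigr)\Bigr]=H(2H-1)|u-v|^{2H-2}.
\]
This is a direct differentiation: on $\{u>v\}$ one gets $\partial_{u}R=Hu^{2H-1}-H(u-v)^{2H-1}$ and then $\partial_{v}\partial_{u}R=H(2H-1)(u-v)^{2H-2}$, and symmetrically on $\{u<v\}$. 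Since $R(0,v)=R(u,0)=0$ and $|u-v|^{2H-2}$ is locally integrable (as $2H-2>-1$), Fubini plus the fundamental theorem of calculus give
\[
H(2H-1)\int_{0}^{t}\!du\int_{0}^{s}\!dv\,|u-v|^{2H-2}=\tfrac{1}{2}\bigl(t^{2H}+s^{2H}-|t-s|^{2H}\bigr),
\]
which finishes the proof.

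There is no serious obstacle here; the only mild subtlety is keeping track of the sign in $\tfrac{d}{du}|u-v|^{2H}=2H|u-v|^{2H-1}\mathrm{sgn}(u-v)$ so that the mixed partial comes out to the same expression $H(2H-1)|u-v|^{2H-2}$ on both sides of the diagonal $\{u=v\}$, which itself has Lebesgue measure zero and can therefore be ignored in the integration.
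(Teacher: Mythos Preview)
Your argument is correct. The paper states Corollary \ref{corollary3} without proof, treating it as an immediate specialization of Proposition \ref{fBM2}; your write-up fills in precisely the computations the paper leaves implicit (the collapse of $\beta(u,v)$ and of the powers $u^{H_1-H_2}v^{H_2-H_1}$, the identity $c_H^2\,B(H-1/2,2-2H)=H(2H-1)$ via the symmetry of the Beta function, and the recovery of $R(t,s)$ by integrating its mixed partial derivative).
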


We will use the representation (\ref{fBM3}) in the proof of Theorem \ref{theorem1}. The next proposition shows that we can always construct an isonormal Gaussian process $W$ over $L^{2}([0,T];\mathbb{R}^{2})$ satisfying (\ref{fBM3}) if we start with fBMs satisfying (\ref{calc4}).

\begin{proposition}
Let $ B^{1} =(B^{1}_{t})_{t\in[0,T]} $ and $ B^{2} = (B^{2}_{t})_{t\in[0,T]} $ be fBMs satisfying (\ref{calc4}). Then there exists an isonormal Gaussian process $W$ over $ L^{2}([0,T];\mathbb{R}^{2}) $ such that 
\begin{align} \label{fBM4}
\left\{\begin{array}{ll} 
{B}^{1}_{t} = W( ( K_{H_{1}}^{\ast}\mathbf{1}_{[0,t]} , 0 ) ), \\
{B}^{2}_{t} = W( ( \rho K_{H_{2}}^{\ast}\mathbf{1}_{[0,t]} , \sqrt{1-\rho^{2}} K_{H_{2}}^{\ast}\mathbf{1}_{[0,t]} ) )
\end{array}\right.
\end{align}
holds.
\end{proposition}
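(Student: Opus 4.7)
The plan is to construct $W$ directly as an isometric embedding of $L^{2}([0,T];\mathbb{R}^{2})$ into a Gaussian Hilbert space containing $B^{1}$ and $B^{2}$, enlarging the underlying probability space if necessary.

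First I would define a linear map $\Phi_{0}$ on the linear span $\mathcal{V}$ of $\{h^{1}([0,t]),h^{2}([0,t]):t\in[0,T]\}\subset L^{2}([0,T];\mathbb{R}^{2})$ by declaring
\[
\Phi_{0}(h^{1}([0,t]))=B^{1}_{t},\qquad \Phi_{0}(h^{2}([0,t]))=B^{2}_{t},
\]
and extending by linearity. The key step is to check that $\Phi_{0}$ preserves inner products on $\mathcal{V}$. There are three types of pairings. For $\langle h^{i}([0,t]),h^{i}([0,s])\rangle$ ($i=1,2$), the isometry property of $K^{\ast}_{H_{i}}$ yields the fBM covariance $R_{H_{i}}(s,t)$, which matches $\mathrm{E}\{B^{i}_{t}B^{i}_{s}\}$ by Corollary \ref{corollary3}. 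For the cross term, direct computation from (\ref{def1}) gives $\langle h^{1}([0,t]),h^{2}([0,s])\rangle=\rho\int_{0}^{T}K_{H_{1}}(t,r)K_{H_{2}}(s,r)\,dr$, and this quantity equals $\mathrm{E}\{B^{1}_{t}B^{2}_{s}\}$ precisely by the chain of identities (\ref{calc1})--(\ref{calc3}) established in the proof of Proposition \ref{fBM2}, together with the standing assumption that $B^{1},B^{2}$ have covariance given by (\ref{calc4}).

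Next, $\Phi_{0}$ extends by continuity to a linear isometry $\Phi\colon \overline{\mathcal{V}}\to L^{2}(\mathbb{P})$, whose image lies in the Gaussian Hilbert space $\mathfrak{H}:=\overline{\mathrm{span}\{B^{1}_{t},B^{2}_{s}:s,t\in[0,T]\}}^{L^{2}(\mathbb{P})}$. To upgrade $\Phi$ to a Gaussian process indexed by \emph{all} of $L^{2}([0,T];\mathbb{R}^{2})$, I would enlarge the probability space to support an isonormal Gaussian process $W^{\perp}$ indexed by the orthogonal complement $\overline{\mathcal{V}}^{\perp}$ that is independent of $\mathfrak{H}$, and then set
\[
W(h):=\Phi(P_{\overline{\mathcal{V}}}h)+W^{\perp}(P_{\overline{\mathcal{V}}^{\perp}}h),\qquad h\in L^{2}([0,T];\mathbb{R}^{2}),
\]
where $P_{\overline{\mathcal{V}}}$ and $P_{\overline{\mathcal{V}}^{\perp}}$ denote the orthogonal projections. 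The orthogonality of the two summands (in $L^{2}(\mathbb{P})$, because the images live in independent Gaussian subspaces) together with the isometry property of $\Phi$ and of $W^{\perp}$ yields $\mathrm{E}\{W(h)W(g)\}=\langle h,g\rangle_{L^{2}([0,T];\mathbb{R}^{2})}$, so $W$ is an isonormal Gaussian process. The representation (\ref{fBM4}) then holds by the very definition of $\Phi_{0}$.

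The main obstacle is the verification of the cross-covariance identity $\langle h^{1}([0,t]),h^{2}([0,s])\rangle=\mathrm{E}\{B^{1}_{t}B^{2}_{s}\}$, which is not immediate from the double-integral form (\ref{calc4}) and requires reading off the intermediate expression $\rho\int_{0}^{T}K_{H_{1}}(t,r)K_{H_{2}}(s,r)\,dr$ from the proof of Proposition \ref{fBM2}; once this is in hand, the remainder of the argument is a standard construction in the theory of Gaussian Hilbert spaces.
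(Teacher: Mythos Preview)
Your argument is correct but takes a different route from the paper's. The paper exploits directly that $K_{H_l}^{\ast}\colon\mathcal{H}_{H_l}\to L^{2}([0,T])$ is a \emph{surjective} isometry: it defines $W^{l}(\varphi)=B^{l}((K_{H_l}^{\ast})^{-1}\varphi)$ for $\varphi\in L^{2}([0,T])$, checks $\mathbb{E}\{W^{1}(\varphi)W^{2}(\psi)\}=\rho\langle\varphi,\psi\rangle_{L^{2}}$ (via the same cross-covariance computation you cite), and then decorrelates by setting $\tilde{W}^{1}=W^{1}$, $\tilde{W}^{2}=(W^{2}-\rho W^{1})/\sqrt{1-\rho^{2}}$ and $W((\varphi,\psi))=\tilde{W}^{1}(\varphi)+\tilde{W}^{2}(\psi)$. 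This is explicit and lives entirely on the original probability space. Your approach is the abstract Gaussian-Hilbert-space version: build an isometry on generators, extend by continuity, and pad the orthogonal complement with independent Gaussian noise. Both are valid; the paper's is slightly sharper in that no extension of the probability space is invoked. In fact your extension step is never needed either (for $|\rho|<1$): since each $K_{H_l}^{\ast}$ is onto $L^{2}([0,T])$, the closed span of $\{h^{1}([0,t])\}$ is $L^{2}([0,T])\times\{0\}$ and that of $\{h^{2}([0,s])\}$ is $\{(\rho f,\sqrt{1-\rho^{2}}f):f\in L^{2}([0,T])\}$, which together span all of $L^{2}([0,T];\mathbb{R}^{2})$, so $\overline{\mathcal{V}}^{\perp}=\{0\}$ and your $\Phi$ already furnishes the full isonormal process.
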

\begin{proof}
We define an isonormal Gaussian process over $ L^{2}([0,T]) $ by $ W^{l}(\varphi) = B^{l}( (K_{H_{l}}^{\ast})^{-1} \varphi ) $ for $ \varphi \in L^{2}([0,T]) $ and $l=1,2$. Then $ B_{t}^{l} = W^{l}(K_{H_{l}}^{\ast}\mathbf{1}_{[0,t]} ) $ by definition. For step functions $ f_{1} ,f_{2} $, it holds that
\begin{align}\label{calc11}
\mathbb{E}\{ B^{1}(f_{1})B^{2}(f_{2}) \} = \rho \langle K_{H_{1}}^{\ast}f_{1} , K_{H_{2}}^{\ast}f_{2} \rangle_{L^{2}([0,T])}.
\end{align}
We can verify (\ref{calc11}) holds for any $ f_{1} \in \mathcal{H}_{H_{1}} $ and $ f_{2} \in \mathcal{H}_{H_{2}} $ by approximating $ f_{l} $ (with respect to the norm $\| \cdot \|_{\mathcal{H}_{H_{l}}} $) by step functions. Therefore we have 
\begin{align}\label{calc12}
\mathbb{E}\{ W^{1}(\varphi)W^{2}(\psi) \} = \rho \langle \varphi, \psi \rangle_{L^{2}([0,T])}
\end{align}
for $ \phi,\psi \in L^{2}([0,T]) $. We set 
\begin{align*}
\begin{cases}
\tilde{W}^{1}(\varphi) = W^{1}(\varphi) \\
\tilde{W}^{2}(\psi) = (W^{2}(\psi) - \rho W^{1}(\psi))/\sqrt{1-\rho^{2}} 
\end{cases}
\end{align*}
and 
\begin{align} \label{calc13}
 W( (\varphi,\psi) ) = \tilde{W}^{1}(\varphi) + \tilde{W}^{2}(\psi)  
\end{align}
for $ \phi,\psi \in L^{2}([0,T]) $. It is easy to check that the isonormal Gaussian process $W$ defined by (\ref{calc13}) satisfies (\ref{fBM4}).
\end{proof}

\section{Model assumptions}

Now we describe our model in this section. We give precise assumptions on the observed processes and the observation times. 

\subsection{Lead-lag between stochastic processes involving fBM}

Using the correlated fBMs introduced in Section \ref{section2}, we consider the following model for a lead-lag relationship between stochastic processes involving fBM.

\begin{assumption}\label{ass1}
Let $(\Omega,\mathcal{F},\mathbb{P})$ be an underlying probability space. If $\theta\in\Theta_{\geq 0}:=[0,\delta)$, then we assume that the two-dimensional process $X=(X_{t})_{t\in[0,T+\delta]}=( (X^{1}_{t}, X^{2}_{t}) )_{t\in[0,T+\delta]}$ is given by 
\begin{align*}
\begin{cases}
X_{t}^{1} = X_{0}^{1} + \sigma_{1} B_{t+\theta}^{1} + A^{1}_{t}, \\
X_{t}^{2} = X_{0}^{2} + \sigma_{2} B_{t}^{2} + A^{2}_{t},
\end{cases}
\end{align*}
for $t\in[0,T+\delta]$, where 
\begin{enumerate}
\item[(A1)] $X^{1}_{0}$ and $X^{2}_{0}$ are real-valued random variables, 
\item[(A2)] the processes $B^{1}$ and $B^{2}$ are fBMs satisfying (\ref{calc4}), and
\item[(A3)] the drift processes $A^{1}$ and $A^{2}$ are Lipschitz continuous $ \mathbb{P} $-almost surely. 
\end{enumerate}
Moreover, if $\theta\in\Theta_{< 0}:=(-\delta,0)$, then we assume that the process $ X^{\star} = (X^{2},X^{1}) $ has a representation 
\begin{align*}
\begin{cases}
X_{t}^{2} = X_{0}^{2} + \sigma_{2} B_{t-\theta}^2 + A^{2}_{t}, \\
X_{t}^{1} = X_{0}^{1} + \sigma_{1} B_{t}^{1} + A^{1}_{t},
\end{cases}
\end{align*}
for $t\in[0,T+\delta]$ where the conditions (A1)-(A3) are in force.
\end{assumption}

A lead-lag fractional Black-Scholes model, in which we are interested, can be described as follows.

\begin{example} \label{fBS1} 

Let $(\Omega,\mathcal{F},\mathbb{P})$ be a probability space which supports fBMs $B^{1}$ and $B^{2}$ satisfying (\ref{calc4}). Here we assume that the $\sigma$-field $ \mathcal{F} $ is complete. 

Let us consider a system of stochastic differential equations
\begin{align}\label{fBS3}
\begin{cases}
S^{1}_{t} = S_{0}^{1} + \mu^{1} \int_{0}^{t} S^{1}_{s}\ ds + \sigma^{1} \int_{0}^{t} S^{1}_{s}\ dB_{s}^{1}, \\
S^{2}_{t} = S_{0}^{2} + \mu^{2} \int_{0}^{t} S^{2}_{s}\ ds + \sigma^{2} \int_{0}^{t} S^{2}_{s}\ dB_{s}^{2}, \\
\end{cases}
\end{align}
where $ t\in[T+2\delta] $, $ S_{0}^{i} >0 $, $ \mu^{i}\in\mathbb{R} $ and $ \sigma^{i} \in \mathbb{R}\setminus \{0\} $ ($i=1,2$). Then, as we noted in Section \ref{section1}, the solution of system (\ref{fBS3}) can be written as
\begin{align*}
S^{i}_{t} = S_{0} \exp( A_{t}^{i} + \sigma^{i} B_{t}^{i} ),
\end{align*}
where $A_{t}^{i} = \mu^{i}t$ if we use the Riemann-Stieltjes integral, and $ A_{t}^{i} = \mu^{i}t - \frac{(\sigma^{i})^{2}}{2}t^{2H} $ if we consider the Wick-It\^{o}-Skorokhod integral. Note that the function $A^{i}$ is Lipschitz continuous in either case.

Let $\theta\in[0,\delta)$. We define ${X}=({X}^{1}, {X}^{2})$ by
\begin{align*}
\begin{cases}
{X}_{t}^{1} = \log S_{t+\theta}^{1} = \log {S}_{0}^{1} + \sigma^{1} {B}_{t+\theta}^{1} + A_{t+\theta}^{1}, \\
{X}_{t}^{2} = \log S_{t}^{2} =\log {S}_{0}^{2} + \sigma^{2} {B}_{t}^{2} + A_{t}^{2},
\end{cases}
\end{align*}
for $t\in[0,T+\delta]$. Then the processes $X^{1}$ and $X^{2}$ satisfy Assumption \ref{ass1}.
\end{example} 

We give another example of the process that satisfies Assumption \ref{ass1} in Section \ref{section3}.

\subsection{Observation}

Now we give the assumptions on the observation. We consider the problem of estimating the lead-lag parameter $\theta\in\Theta$ from discretely observed $X^{1}$ and $X^{2}$. Since we keep in mind that the processes $X^{1}$ and $X^{2}$ are prices of stocks traded at high frequency, it is natural to consider asymptotics where the number of observations tend to infinity as $n\to\infty$. Therefore we assume that the process $X^{1}$ is observed at  
\[
\mathcal{T}^{1,n} = \{ 0 = t_{0}^{1,n} < t_{1}^{1,n} < \ldots < t_{N_{1}(n)}^{1,n} = T+\delta \},
\]
and the process $X^{2}$ at
\[
\mathcal{T}^{2,n} = \{ 0 = t_{0}^{2,n} < t_{1}^{2,n} < \ldots < t_{N_{2}(n)}^{2,n} = T+\delta \},
\]
where $n$, $N_{1}(n)$, and $N_{2}(n)$ are positive integers. Note that they are in general unevenly spaced, non-synchronous and may be random and depend on $X$. Now we introduce some notations that will be useful for describing the assumptions on the observation times $ \mathcal{T}^{1,n} $ and $ \mathcal{T}^{2,n} $. 


%


\begin{notation}
We set
\begin{itemize}
\item $ I_{i}^{k,n} = (t_{i-1}^{k,n}, 1_{i}^{k,n}],\ \ i=1,\ldots,N_{k}(n)$ for $k=1,2$, 
\item $ | I_{i}^{k,n} | = t_{i}^{k,n}-t_{i-1}^{k,n},\ i=1,\ldots,N_{k}(n)$ for $k=1,2$,  
\item $ \mathcal{I}^{k,n} = \{ I_{i}^{k,n} \mid i=1,\ldots,N_{k}(n) \} $ for $k=1,2$,
\item $ r_{n} = \max\{ |I_{i}^{1,n}|,|I_{j}^{2,n}|\mid i=1,\ldots,N_{1}(n),\ j=1,\ldots,N_{2}(n) \} $ (the maximum mesh size of the observation times at the step $n$),
\item $ \underline{I} = \inf \{ x\mid x\in I \} $ for an interval $I$,
\item $ \overline{I}  = \sup\{ x \mid x\in I \}$ for an interval $I$,
\item $I_{\theta}$ = $\{ x+\theta \mid x\in I \}$ for an interval $I$ and $\theta\in\mathbb{R}$, 
\item $\mathcal{I}^{k,n}_{\theta} = \{ (I_{i}^{k,n})_{\theta} \mid i=1,\ldots,N_{k}(n) \}$ for $k=1,2$ and $\theta\in\mathbb{R}$,
\item $\mathcal{K}(I) = \bigcup_{ k\colon K_{k}\cap I \neq \emptyset } K_{k}$ for given subintervals $\mathcal{K}:=\{ K_{k} \mid k \}$ and $I$ (if there are no $ k $ such that $ K_{k} \cap I \neq \emptyset $, then we set $ \mathcal{K}(I) = \emptyset $),
\item $ f(I) = f(\overline{I}) - f(\underline{I}) $ for a function $ f $ and an interval $ I $, and
\item $V_{\geq0} = V\cap [0,\infty)$ and $V_{<0} = V\cap (-\infty,0)$ for a subset $V$ of $\mathbb{R}$.
\end{itemize}
\end{notation}

\begin{notation}
Let $\{a_{i}\}_{i\in I}$ be a real sequence indexed by $I\subset\mathbb{Z}_{\geq0}$. If we omit the range of summation, then we mean the summation over $I$, that is, $\sum_{i}a_{i} = \sum_{i\in I}a_{i}$. If the index $I$ is empty, then we set $\sum_{i\in I} a_{i} = 0$. We also denote $\bigcup_{i\in I}A_{i} $ (resp. $\bigcap_{i\in I} A_{i}$) as $ \bigcup_{i}A_{i} $ (resp. $\bigcap_{i}A_{i}$) for sets $\{A_{i}\}_{i\in I}$.
\end{notation}

Now we give precise assumptions on the observation times. We give some examples that satisfy Assumption \ref{ass2} in Section \ref{section4}.

\begin{assumption}\label{ass2} 
Let $(\Omega,\mathcal{F},\mathbb{P})$ be an underlying probability space. Suppose that there are random variables $\{N_{l}(n) \in \mathbb{N} \mid l=1,2;n\in\mathbb{N}\}$ and $\{t_{i}^{l,n}\mid i=0,1\ldots,N_{l}(n);l=1,2;n\in\mathbb{N}\}$ such that, for $l=1,2$ and $n\in\mathbb{N}$, 
\[
0=t_{0}^{l,n} < t_{1}^{l,n} < \ldots <  t_{N_{l}(n)}^{l,n} = T+\delta,
\]
hold $\mathbb{P}$-almost surely. We also assume that $\bigcap_{l=1,2}\{i\mid \overline{I_{i}^{l,n}} \leq T \} \neq \emptyset$ for all $n\in\mathbb{N}$. We set $\mathcal{T}^{l,n} = \{ t_{i}^{l,n} \mid i=1,\ldots,N_{l}(n) \} $. 

The observation times $\mathcal{T}^{1,n}$ and $\mathcal{T}^{2,n}$ satisfy following conditions.
\begin{enumerate}
\item[(B1)] The $\sigma$-algebra $ \sigma(\cup_{n\in\mathbb{N}}\mathcal{T}^{1,n}) $ generated by the observation times $ \mathcal{T}^{1,n},\ n\in\mathbb{N} $ is independent of the $\sigma$-algebra $ \sigma(\cup_{n\in\mathbb{N}}\mathcal{T}^{2,n}) $ generated by $ \mathcal{T}_{2,n},\ n\in \mathbb{N} $. Moreover the $\sigma$-algebra $ \sigma(\mathcal{T}) := \sigma(\cup_{n\in\mathbb{N}}(\mathcal{T}^{1,n} \cup \mathcal{T}^{2,n})) $ is independent of the fBMs $ B^{1} $ and $B^{2}$. 
\item[(B2)] There exist positive constants $ c_{\ast}>0 $ and $\epsilon>0$ such that 
\begin{align} \label{limit7}
\mathbb{P}\left\{ \frac{ \sum_{i:\overline{I_{i}^{1,n}} \leq T, \underline{I_{i}^{1,n}} \geq \epsilon} | I_{i}^{1,n} |^{H_{1}+H_{2}} }{ \sqrt{\sum_{i} |I_{i}^{1,n}|^{2H_{1}} } \sqrt{ \sum_{j} |I_{j}^{2,n}|^{2H_{2}} } }  \geq c_{\ast} \right\} \to 1
\end{align}
and
\begin{align} \label{limit8}
\mathbb{P}\left\{ \frac{ \sum_{j:\overline{I_{j}^{2,n}} \leq T, \underline{I_{j}^{2,n}} \geq \epsilon} | I_{i}^{2,n} |^{H_{1}+H_{2}} }{ \sqrt{\sum_{i} |I_{i}^{1,n}|^{2H_{1}} } \sqrt{ \sum_{j} |I_{j}^{2,n}|^{2H_{2}} } }  \geq c_{\ast} \right\} \to 1
\end{align}
hold as $n\to\infty$.
\end{enumerate}
Moreover, we assume that there exists a deterministic sequence $ \{v_{n}\}_{n\in\mathbb{N}} $ such that $ v_{n} \in (0,\delta) $ for all $n\in\mathbb{N}$, $ v_{n} \to 0 $ as $ n\to\infty $ and the following properties (B3) and (B4) hold.
\begin{enumerate}
\item[(B3)] For any $\mu>0$, there exists $ \gamma(\mu) > \mu $ such that
\[
\frac{ r_{n}^{2H_{1}-1+\mu} }{ \sum_{i:\overline{I_{i}^{1,n}} \leq T } | I_{i}^{1,n} |^{2H_{1}} }  = O_{p}( v_{n}^{\gamma(\mu)} )
\]
and
\[
\frac{ r_{n}^{2H_{2}-1+\mu} }{ \sum_{j: \overline{I_{j}^{2,n}} \leq T} | I_{j}^{2,n} |^{2H_{2}} }  = O_{p}( v_{n}^{\gamma(\mu)} )
\]
hold.
\item[(B4)] It holds that
\begin{align*}
 \lim_{n\to\infty} v_{n}^{2-(H_{1}\vee H_{2})} (\mathbb{E} \{ \#\mathcal{I}^{1,n} \} + \mathbb{E} \{ \# \mathcal{I}^{2,n} \}) = 0. 
\end{align*}
\end{enumerate}
\end{assumption}

\begin{remark}
The assumption $\bigcap_{l=1,2}\{i\mid \overline{I_{i}^{l,n}} \leq T \} \neq \emptyset$ for all $n\in\mathbb{N}$ is in fact not a constraint. Since 
\[
\bigcup_{l=1,2} \bigcap_{i} \left\{ \overline{ I_{i}^{l,n} } > T \right\} \subset \bigcup_{l=1,2} \left\{ \frac{ \sum_{i:\overline{I_{i}^{l,n}} \leq T, \underline{I_{i}^{l,n}} \geq \epsilon} | I_{i}^{l,n} |^{H_{1}+H_{2}} }{ \sqrt{\sum_{i} |I_{i}^{1,n}|^{2H_{1}} } \sqrt{ \sum_{j} |I_{j}^{2,n}|^{2H_{2}} } } =0 \right\}
\]
holds, we have $\mathbb{P}\{\bigcap_{l=1,2} \bigcup_{i} \{ \overline{ I_{i}^{l,n} } \leq T\}\} \to 1 $ as $n\to\infty$ by (B2). Hence we can assume $\bigcap_{l=1,2}\{i\mid \overline{I_{i}^{l,n}} \leq T \} \neq \emptyset$ outside an asymptotically negligible set.
\end{remark}

Assumption \ref{ass2} is for consistency of the lead-lag estimator defined later in Definition \ref{def4}. These conditions may seem strange in comparison with the assumptions in \cite{hoffmann2013estimation}. In Section \ref{sec5}, we compare Assumption \ref{ass2} with the assumptions in \cite{hoffmann2013estimation}. 

\section{Construction of an esimator for the lead-lag parameter} \label{sec5}

Let us now turn to construct an estimator for the lead-lag parameter $\theta\in\Theta$. First we explain the idea for our estimator. Then we define our estimator in Definition \ref{def4} and state the main theorem of this paper (Theorem \ref{theorem1}). Finally we compare our consistency result with that of \cite{hoffmann2013estimation}. In particular, we consider differences of the assumptions on the sampling scheme and the grid between our case and the semimartingale case.

\subsection{The idea for the lead-lag estimator} \label{sec7}

To explain the idea for our estimator, let us outline the proof of consistency of the lead-lag estimator in semimartingale case. For now, we assume that the observed process $X=(X^{1},X^{2})$ is a regular semimartingale with lead-lag parameter $\theta\in[0,\delta)$. We also assume that the drift terms $A^{1}$ and $A^{2}$ are zero for simplicity. Let $\tilde{\theta}_{n}\in[0,\delta) \cap \mathcal{G}^{n}$. 

Suppose first that $| \tilde{\theta}_{n}-\theta |$ converges to $0$ rapidly as $n\to\infty$.
Then the Hayashi-Yoshida covariance contrast $\mathcal{U}_{n}(\tilde{\theta}_{n})$ is ``nearly'' the Hayashi-Yoshida estimator:
\begin{align*}
\mathcal{U}_{n}(\tilde{\theta}_{n}) &= \sum_{i,j: \overline{I}_{i}^{1,n}\leq T}X^{1}(I_{i}^{1,n})X^{2}(I_{j}^{2,n})\mathbf{1}_{I_{i}^{1,n}\cap(I_{j}^{2,n})_{-\tilde{\theta}_{n}}\neq \emptyset} \\
&= \sum_{i,j: \overline{I}_{i}^{1,n}\leq T} X^{1}(I_{i}^{1,n})\tau_{-\tilde{\theta}_{n}}(X^{2})( (I_{j}^{2,n})_{-\tilde{\theta}_{n}} ) \mathbf{1}_{I_{i}^{1,n}\cap(I_{j}^{2,n})_{-\tilde{\theta}_{n}}\neq \emptyset}.
\end{align*}
Therefore, the limit
\begin{align} \label{limit18}
 \mathcal{U}_{n}(\tilde{\theta}_{n}) \to^{p} \langle X, \tau_{-\theta}(Y) \rangle_{T} 
\end{align}
holds as $n\to\infty$. This is Proposition 4 of \cite{hoffmann2013estimation}. For the properties of the Hayashi-Yoshida estimator, we refer to \cite{hayashi2008consistent, hayashi2008asymptotic, hayashi2005covariance} for example. 

On the other hand, we can prove
\begin{align} \label{limit19}
\sup_{\tilde{\theta}\in \mathcal{G}^{n} \cap \{\tilde{\theta}\in \Theta \mid |\tilde{\theta}-\theta|\geq 2v_{n}\}} |\mathcal{U}_{n}(\tilde{\theta})| \to^{p} 0
\end{align}
as $n\to\infty$. This is a consequence of Proposition 3 of \cite{hoffmann2013estimation}. If $|\tilde{\theta} - \theta| \geq 2v_{n}$, then the contrast function $\mathcal{U}_{n}(\tilde{\theta})$ can be rewritten as the terminal value of a martingale outside of an asymptotically negligible set. To handle the probability of the supremum, the B\"{u}rkholder-Davis-Gundy inequality is used in \cite{hoffmann2013estimation}. 

By (\ref{limit18}) and (\ref{limit19}), the contrast function $\mathcal{U}_{n}(\tilde{\theta})$ has a peak at $\tilde{\theta}=\theta$ asymptotically on the set $\{\langle X, \tau_{-\theta}(Y) \rangle_{T} \neq0\}$. Hence we can conclude that the maximum contrast estimator is consistent.

Now we go back to the original problem. Let us assume that the observed process $X$ satisfies Assumption \ref{ass1}. We essentially follow the strategy above. However, we can not use the Hayashi-Yoshida covariance contrast function directly. This is because the Hayashi-Yoshida estimator may converge to zero if the observed process is not a semimartingale. For example, let us set $A^{1}=A^{2}=0$, $\sigma_{1}=\sigma_{2}=0$, $H_{1}=H_{2}=H$ and $\rho=1$ in Assumption \ref{ass1} and $\mathcal{T}^{l,n}=\{ (1+\delta)i/n \mid i=1,\ldots,n \}$ for $l=1,2$ in Assumption \ref{ass2}. We also assume $\theta=0$ for simplicity. Then we obtain 
\begin{align*}
\mathcal{U}_{n}(0) &= \frac{1}{n^{2H-1}} n^{2H-1}\sum_{i=1}^{\lfloor n/(1+\delta) \rfloor} (B^{1}_{i/n}- B^{1}_{(i-1)/n} )^{2} \\
&\to^{p} 0
\end{align*}
as $n\to\infty$ by ergodicity and self-similarity of fractional Gaussian noise. To ensure that the contrast function is away from zero when $| \tilde{\theta}_{n}-\theta |$ converges to $0$ rapidly as $n\to\infty$, we consider the ``correlation version'' of the contrast $\mathcal{U}_{n}(\tilde{\theta})$. We consider the \textit{shifted Hayashi-Yoshida correlation contrast function} $\mathcal{U}^{\mathrm{cor}}_{n}\colon \Theta\to\mathbb{R}$ defined by
\begin{align*}
\nonumber \mathcal{U}^{\mathrm{cor}}_{n}(\tilde{\theta}) &= \mathbf{1}_{\Theta_{\geq0}}(\tilde{\theta}) \frac{ \sum_{i,j:\overline{I_{i}^{1,n}}\leq T} X^{1}(I_{i}^{1,n}) X^{2}(I_{j}^{2,n}) \mathbf{1}_{I_{i}^{1,n} \cap (I_{j}^{2,n})_{-\tilde{\theta}} \neq \emptyset } }{ \sqrt{ \sum_{i:\overline{I_{i}^{1,n}}\leq T} X^{1}(I_{i}^{1,n})^{2} } \sqrt{ \sum_{j} X^{2}(I_{j}^{2,n})^{2} } } \\
& \quad + \mathbf{1}_{\Theta_{<0}}(\tilde{\theta}) \frac{ \sum_{i,j:\overline{I_{j}^{2,n}}\leq T} X^{1}(I_{i}^{1,n}) X^{2}(I_{j}^{2,n}) \mathbf{1}_{ (I_{i}^{1,n})_{\tilde{\theta}} \cap I_{j}^{2,n} \neq \emptyset } }{ \sqrt{ \sum_{i} X^{1}(I_{i}^{1,n})^{2} } \sqrt{ \sum_{j:\overline{I_{j}^{2,n}}\leq T} X^{2}(I_{j}^{2,n})^{2} } }.
\end{align*}
Then we can ensure that the contrast $\mathcal{U}^{\mathrm{cor}}_{n}(\tilde{\theta}_{n})$ is away from zero when $| \tilde{\theta}_{n}-\theta |$ converges to $0$ rapidly as $n\to\infty$.

We have another problem. In the semimartingale case, the proof of (\ref{limit19}) relies on the martingale property of the observed process. However, we cannot expect it in our case. We overcome this problem by exploiting the Gaussian property of the observed process: the Wiener chaos decomposition and the hypercontractivity.

\subsection{Definition of the lead-lag estimator}
Following the discussion above, we now define the lead-lag estimator in our case. 

\begin{definition}\label{definition1}
We define the \textit{shifted Hayashi-Yoshida correlation contrast function} $\mathcal{U}^{\mathrm{cor}}_{n}\colon \Theta\to\mathbb{R}$ by
\begin{align}\label{def5}
\nonumber \mathcal{U}^{\mathrm{cor}}_{n}(\tilde{\theta}) &= \mathbf{1}_{\Theta_{\geq0}}(\tilde{\theta}) \frac{ \sum_{i,j:\overline{I_{i}^{1,n}}\leq T} X^{1}(I_{i}^{1,n}) X^{2}(I_{j}^{2,n}) \mathbf{1}_{I_{i}^{1,n} \cap (I_{j}^{2,n})_{-\tilde{\theta}} \neq \emptyset } }{ \sqrt{ \sum_{i:\overline{I_{i}^{1,n}}\leq T} X^{1}(I_{i}^{1,n})^{2} } \sqrt{ \sum_{j} X^{2}(I_{j}^{2,n})^{2} } } \\
& \quad + \mathbf{1}_{\Theta_{<0}}(\tilde{\theta}) \frac{ \sum_{i,j:\overline{I_{j}^{2,n}}\leq T} X^{1}(I_{i}^{1,n}) X^{2}(I_{j}^{2,n}) \mathbf{1}_{ (I_{i}^{1,n})_{\tilde{\theta}} \cap I_{j}^{2,n} \neq \emptyset } }{ \sqrt{ \sum_{i} X^{1}(I_{i}^{1,n})^{2} } \sqrt{ \sum_{j:\overline{I_{j}^{2,n}}\leq T} X^{2}(I_{j}^{2,n})^{2} } }.
\end{align}
if $\sum_{i:\overline{I_{i}^{l,n}}\leq T} X^{l}(I_{i}^{l,n})^{2} \neq 0$ for $l=1,2$. When $\sum_{i:\overline{I_{i}^{l,n}}\leq T} X^{l}(I_{i}^{l,n})^{2} = 0$ for some $l$, we set $\mathcal{U}^{\mathrm{cor}}_{n}(\tilde{\theta})=0$. Hereafter, we denote the contrast function $\mathcal{U}^{\mathrm{cor}}_{n}(\tilde{\theta})$ in (\ref{def5}) by $\mathcal{U}_{n}(\tilde{\theta})$ for notational simplicity. 
\end{definition}


Our estimator $ \hat{\theta}_{n} $ is obtained by maximizing the contrast function $ | \mathcal{U}_{n}(\tilde{\theta}) | $ over a finite grid $\mathcal{G}^{n}$ in the parameter space $\Theta$. The assumptions on the grid $\mathcal{G}^{n}$ are as follows.
\begin{assumption}\label{ass3}
Let $\mathcal{G}^{n}$ be a finite subset of the parameter space $\Theta$ such that 
\begin{enumerate}
\item[(C1)] $0\in\mathcal{G}^{n}$,
\item[(C2)] $\#\mathcal{G}^{n} = O(v_{n}^{-\beta})$ for some $\beta>0$, and
\item[(C3)] for some deterministic sequence $\rho_{n}>0$ and positive constant $\varsigma\in(0,1-(H_{1}\vee H_{2}))$, we have
\begin{align*}
\Theta \subset \bigcup_{\tilde{\theta}\in\mathcal{G}^{n}} [\tilde{\theta}-\rho_{n},\tilde{\theta}+\rho_{n}]
\end{align*}
and 
\[
\lim_{n\to\infty} \rho_{n}^{1-\varsigma}(\mathbb{E}\{\#\mathcal{I}^{1,n}\} + \mathbb{E}\{\#\mathcal{I}^{2,n}\}) = 0.
\]
\end{enumerate}
Here the sequence $ (v_{n})_{n\in\mathbb{N}} $ is from Assumption \ref{ass2}. 
\end{assumption}

\begin{example}
An example of the grid $\mathcal{G}^{n}$ satisfying Assumption \ref{ass3} is 
\begin{align*}
\mathcal{G}^{n} = \{ v_{n}^{2-(H_{1}\vee H_{2})+\epsilon} m \mid m\in\mathbb{Z} \}\cap\Theta
\end{align*}
for any $\epsilon>0$ by (B4).
\end{example}


Now we are ready to define our lead-lag estimator.

\begin{definition} \label{def4}
Let $\mathcal{U}_{n}(\hat{\theta}_{n})$ be the shifted Hayashi-Yoshida correlation contrast function defined in Definition \ref{definition1}. We define the lead-lag estimator $\hat{\theta}_{n}$ as a solution of 
\begin{align*}
|\mathcal{U}_{n}(\hat{\theta}_{n})| = \max_{\tilde{\theta}\in\mathcal{G}^{n}} |\mathcal{U}_{n}(\tilde{\theta})|.
\end{align*}
\end{definition}

\begin{remark}

We can assume that $\hat{\theta}_{n}$ is chosen to be measurable. For example, let $ \hat{\theta}^{\ast}_{n} $ denote the largest element in the set $\arg\max_{\tilde{\theta}\in\mathcal{G}^{n}}|\mathcal{U}_{n}(\tilde{\theta})|$. Then we have
\[
\{ \hat{\theta}_{n}^{\ast} \leq r \} = \left\{ \max_{\tilde{\theta}\in\mathcal{G}^{n}\cap(r,\delta)} |\mathcal{U}_{n}(\tilde{\theta})| < \max_{\tilde{\theta}\in\mathcal{G}^{n}\cap [0,r]} |\mathcal{U}_{n}(\tilde{\theta})| \right\}
\]
for any $r\in[0,\delta)$, and hence $ \hat{\theta}_{n}^{\ast} $ is measurable.

\end{remark}

Let us state our main theorem.

\begin{theorem}\label{theorem1}
Suppose that 
\begin{itemize}
\item[(a)] the processes $X^{1}$ and $X^{2}$ are defined as in Assumption \ref{ass1} with $\rho\neq0$,  
\item[(b)] the observation times $\mathcal{T}^{1,n}$ and $\mathcal{T}^{2,n}$ satisfy Assumption \ref{ass2}, and
\item[(c)] the grid $\mathcal{G}^{n}$ satisfies Assumption \ref{ass3}.
\end{itemize}
Then the estimatior $\hat{\theta}_{n}$ satisfies 
\begin{align}\label{limit13}
v_{n}^{-1}(\hat{\theta}_{n} -\theta) \to^{p} 0
\end{align}
as $n\to \infty$. Here the symbol $\to^{p}$ denotes convergence in probability. Note that the sequence $(v_{n})_{n\in\mathbb{N}}$ is from Assumption \ref{ass2}.
\end{theorem}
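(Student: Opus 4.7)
The plan is to follow the two-step strategy outlined in Section \ref{sec7}. The goal is to show (i) $|\mathcal{U}_n(\theta)|$ is bounded below in probability, and (ii) for every $\epsilon>0$,
\[
\sup_{\tilde\theta\in\mathcal{G}^n,\ |\tilde\theta-\theta|\geq \epsilon v_n}|\mathcal{U}_n(\tilde\theta)|\to^p 0.
\]
Combined with (C3), which guarantees a grid point $\tilde\theta^\ast\in\mathcal{G}^n$ with $|\tilde\theta^\ast-\theta|\leq \rho_n$ at which $|\mathcal{U}_n(\tilde\theta^\ast)|$ is still bounded below (provided the oscillation of $\mathcal{U}_n$ over a $\rho_n$-ball is $o_p(1)$), these two facts force $|\hat\theta_n-\theta|<\epsilon v_n$ asymptotically. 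By the symmetry built into Assumption \ref{ass1}, I may assume $\theta\in\Theta_{\geq 0}$. The Lipschitz drifts $A^l$ contribute $O(|I_i^{l,n}|)$ per increment, of strictly smaller order than $|I_i^{l,n}|^{H_l}$ (since $H_l>1/2$); (B4) ensures this contribution is asymptotically negligible in both the numerator and denominator of $\mathcal{U}_n$, so I may reduce to $A^l\equiv 0$ and $X_0^l\equiv 0$.

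For the reduced model, use the representation (\ref{fBM3}) to write $X^1(I_i^{1,n})=\sigma_1 W(h^1((I_i^{1,n})_\theta))$ and $X^2(I_j^{2,n})=\sigma_2 W(h^2(I_j^{2,n}))$. The product formula (Theorem \ref{theorem2}) with $p=q=1$ decomposes each product as
\[
X^1(I_i^{1,n})X^2(I_j^{2,n})=\sigma_1\sigma_2\bigl\langle h^1((I_i^{1,n})_\theta),h^2(I_j^{2,n})\bigr\rangle+\sigma_1\sigma_2\,\mathbb{I}_2\bigl(h^1((I_i^{1,n})_\theta)\,\tilde\otimes\, h^2(I_j^{2,n})\bigr),
\]
a $\sigma(\mathcal{T})$-measurable signal plus a second-chaos remainder; an analogous decomposition applies to $X^l(I_i^{l,n})^2$. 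At $\tilde\theta=\theta$, Proposition \ref{fBM2} identifies the signal sum as $\rho\sigma_1\sigma_2$ times a positive kernel concentrated on the diagonal $u+\theta=v$, which by (B2) is of the same order as the product of the two denominator sums. The chaos-$2$ contributions have conditional second moments controlled by Proposition \ref{proposition3}, and (B3) makes them of strictly smaller order than the signal. Hence $|\mathcal{U}_n(\theta)|$ is bounded below in probability.

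The technically hard step is (ii). Fix $\tilde\theta\in\mathcal{G}^n$ with $|\tilde\theta-\theta|\geq\epsilon v_n$. The signal part of the numerator now integrates the singular kernel $|u-v|^{H_1+H_2-2}$ over pairs $(I_i^{1,n})_\theta,(I_j^{2,n})$ whose centers differ by at least $\epsilon v_n-r_n$, so the singular mass near the diagonal is removed and the signal is dominated, relative to the denominator, by a factor vanishing with $v_n$ under (B3). The chaos-$2$ remainder is handled in two stages: Proposition \ref{proposition3} gives a conditional second-moment bound of order $r_n^{H_1+H_2-1}$ times the denominator-squared, hence the ratio to the denominator has conditional variance vanishing under (B3); hypercontractivity (Theorem \ref{theorem3}) then boosts this to an $L^{2q}$ estimate with $q$ chosen sufficiently large in terms of $\beta$ from (C2), which together with Chebyshev's inequality and a union bound over $\#\mathcal{G}^n=O(v_n^{-\beta})$ grid points gives the uniform convergence.

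The principal obstacle is (ii): simultaneously (a) quantifying the off-diagonal decay of the signal so that it beats the denominator by a power of $v_n$, (b) controlling the chaos-$2$ noise at a rate that, after hypercontractive boosting, absorbs the union-bound factor $v_n^{-\beta}$, and (c) handling the combinatorics of non-synchronous intervals under the shift $\tilde\theta$ cleanly enough that the conditioning on $\sigma(\mathcal{T})$ remains tractable. Steps (i) and (ii), combined with the grid density guaranteed by (C3), then yield $v_n^{-1}(\hat\theta_n-\theta)\to^p 0$.
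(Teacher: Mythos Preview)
Your proposal is correct and follows essentially the same route as the paper's own proof: reduction to $\theta\in\Theta_{\geq 0}$ and $A^l\equiv 0$, the product-formula decomposition of each cross-increment into a $\sigma(\mathcal{T})$-measurable signal plus a second-chaos remainder, control of the signal via the explicit covariance kernel of Proposition~\ref{fBM2} (diagonal mass for the lower bound, off-diagonal decay under the $\epsilon v_n$-separation for the upper bound), and control of the chaos-$2$ part by the isometry plus hypercontractivity followed by a union bound over $\#\mathcal{G}^n=O(v_n^{-\beta})$ grid points.

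Two small points of divergence. First, the paper does not prove the lower bound at $\theta$ and then transfer it by an oscillation estimate; instead Proposition~\ref{lemma5} works directly at a grid point $\tilde\theta_n$ with $|\tilde\theta_n-\theta|\leq\rho_n$, and the shift by $\tilde\theta_n-\theta$ is absorbed into an extra term $R_6^n$ whose negligibility is precisely where the density condition in (C3), namely $\rho_n^{1-\varsigma}(\mathbb{E}\{\#\mathcal{I}^{1,n}\}+\mathbb{E}\{\#\mathcal{I}^{2,n}\})\to 0$, is used. Your parenthetical ``oscillation over a $\rho_n$-ball is $o_p(1)$'' is essentially the same content, but be aware that this step is not free---it is exactly the place where (C3) does real work. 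Second, the drift removal in the paper (Lemmas~\ref{lemma9} and~\ref{lemma10}) relies on (B3) rather than (B4); (B4) enters only in the bound for the $R_{7,2}^n$ piece inside Proposition~\ref{lemma5}.
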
 

\begin{remark}
\begin{asparaenum}
\item It is interesting to consider removing the assumption $H\in(1/2,1)$. By constructing correlated fBMs from correlated BMs as in Section \ref{section2}, we could consider a lead-lag model between two processes involving fBMs with any Hurst parameter. However, we use the representation (\ref{calc4}) of the covariance when we prove the consistency of the lead-lag estimator. 

\item As we shall see in Section \ref{section7}, the convergence rate $v_{n}$ essentially equals $n^{-1}$ when the observation times are synchronous and equispaced with $n$ points or given by the Poisson sampling scheme with frequency proportional to $n$. However, as mentioned in \cite{hoffmann2013estimation}, the convergence rate $v_{n}$ could be improved in our case. 
In fact, \cite{koike2016asymptotic} analyzed the lead-lag model based on Gaussian likelihood and suggested the possibility such that the convergence rate becomes $n^{-3/2}$ when the observed processes are correlated Brownian motions with the microstructure noise and the observation is synchronous and evenly spaced with $n$ points.

\item For financial high-frequency data, the true price process are considered to be contaminated by \textit{market microstructure noise}, although we do not take into account the presence of it in this paper. In the semimartingale case, it is suggested in \cite{hoffmann2013estimation} that using a contrast function based on the pre-averaged Hayashi-Yoshida estimator gives a consistent lead-lag estimator even if microstructure noise is present. However, it is unclear whether the same approach works or not in our non-semimartingale case. For the properties of the pre-averaged Hayashi-Yoshida estimator, see \cite{koike2014limit, koike2016quadratic} and references therein. 
\end{asparaenum}
\end{remark}

Before proceeding the proof of Theorem \ref{theorem1}, let us compare our consistency result with the one of \cite{hoffmann2013estimation}. 

The condition (B1) in Assumption \ref{ass2} is a counterpart of the condition [B2] of \cite{hoffmann2013estimation}. Although they allowed the observation times to be dependent on the process, we content ourselves with the assumption that the observation times are independent of the process. 

The condition (B2) has no counterpart in \cite{hoffmann2013estimation}. This is because (B2) is naturally satisfied in semimartingale case. Let us set $H_{1}=H_{2}=1/2$ in (B2). Then (B2) becomes as follows.
\begin{enumerate}
\item[(B2$^{\prime}$)] There exist positive constants $c_{\ast}>0$ and $\epsilon>0$ such that
\[
\mathbb{P}\Biggl\{ \sum_{i:\overline{I_{i}^{l,n}}\leq T, \underline{I_{i}^{l,n}}\geq \epsilon} |I_{i}^{l,n}| \geq c_{\ast} \Biggr\} \to^{p} 1
\]
holds as $n\to\infty$ for $l=1,2$. 
\end{enumerate}
Let us assume $r_{n}=o_{p}(1)$. Since we consider high-frequency asymptotics, this is a minimum requirement. Then (B2$^{\prime}$) is derived from $r_{n}=o_{p}(1)$ since the inequality
\[
\mathbb{P}\Biggl\{ \sum_{i:\overline{I_{i}^{l,n}}\leq T, \underline{I_{i}^{l,n}}\geq T/2} |I_{i}^{l,n}| \geq T/4 \Biggr\} \geq \mathbb{P}\{r_{n} < T/8\}
\]
holds. 

The condition (B3) is a counterpart of the condition [B1] of \cite{hoffmann2013estimation}, that is, $r_{n} = o_{p}(v_{n})$. If $H_{1}=H_{2}=1/2$, then the condition (B3) becomes as follows.
\begin{enumerate}
\item[(B3$^{\prime}$)] For any $\mu>0$ there exists $\gamma(\mu)>\mu$ such that $r_{n}^{\mu} = O_{p}(v_{n}^{\gamma(\mu)})$ holds as $n\to\infty$. 
\end{enumerate}
The condition (B3$^{\prime}$) clearly implies $r_{n}=o_{p}(v_{n})$. Hence (B3) is a bit stronger than the condition [B1] of \cite{hoffmann2013estimation}.

The condition (B4) also has no counterpart in the semimartingale case. Unfortunately, the author has no explanation for the condition (B4). We use the condition (B4) only when we show that the contrast $\mathcal{U}_{n}(\tilde{\theta}_{n})$ is away from zero if $| \tilde{\theta}_{n}-\theta |$ converges to $0$ rapidly as $n\to\infty$. In semimartingale case, the consistency of the Hayashi-Yoshida estimator is exploited here. 

Finally, Assumption \ref{ass3} is a counterpart of [B3] in \cite{hoffmann2013estimation}.

%

\section{Proof of Theorem \ref{theorem1}} \label{sec6}

In this section, we give the proof of Theorem \ref{theorem1}. Without loss of generality, we can assume that $\theta \in \Theta_{\geq0}$. By Assumption \ref{ass1}, the case where $\theta \in \Theta_{<0}$ is equivalent to the case of $ -\theta  \in \Theta_{\geq 0}$ with indices 1 and 2 interchanged. Therefore the proof when $ \theta \in \Theta_{\geq0} $ can be applied in the case of $\theta\in\Theta_{<0}$. First we prove Theorem \ref{theorem1} assuming that $A^{1}=A^{2}=0$, and then we remove this assumption. 

\begin{notation}
Let $\{a_{n}\}_{n\in\mathbb{N}}$ and $\{b_{n}\}_{n\in\mathbb{N}}$ be sequences of real numbers. If there exists a positive constant $c>0$ independent of $n$ such that $a_{n} \leq c b_{n}$ for all $n$, then we write $a_{n} \lesssim b_{n}$.

\end{notation}

\subsection{Proof of Theorem \ref{theorem1} when $A^{1} = A^{2} =0$}

Under the additional assumption $A^{1} = A^{2} =0$, the contrast function $ \mathcal{U}_{n}(\tilde{\theta}) $ becomes
\begin{align}\label{calc8}
\nonumber \mathcal{U}_{n}(\tilde{\theta}) &= \mathbf{1}_{\Theta_{\geq0}}(\tilde{\theta}) \frac{ \sum_{i,j:\overline{I_{i}^{1,n}}\leq T} B^{1}((I_{i}^{1,n})_{\theta}) B^{2}(I_{j}^{2,n}) \mathbf{1}_{I_{i}^{1,n} \cap (I_{j}^{2,n})_{-\tilde{\theta}} \neq \emptyset } }{ \sqrt{ \sum_{i:\overline{I_{i}^{1,n}}\leq T} B^{1}((I_{i}^{n})_{\theta})^{2} } \sqrt{ \sum_{j} B^{2}(I_{j}^{2,n})^{2} } } \\
& \quad + \mathbf{1}_{\Theta_{<0}}(\tilde{\theta}) \frac{ \sum_{i,j:\overline{I_{j}^{2,n}}\leq T} B^{1}((I_{i}^{1,n})_{\theta}) B^{2}(I_{j}^{2,n}) \mathbf{1}_{ (I_{i}^{1,n})_{\tilde{\theta}} \cap I_{j}^{2,n} \neq \emptyset } }{ \sqrt{ \sum_{i} B^{1}((I_{i}^{n})_{\theta})^{2} } \sqrt{ \sum_{j:\overline{I_{j}^{2,n}}\leq T} B^{2}(I_{j}^{2,n})^{2} } }
\end{align}
if the denominators appearing in (\ref{calc8}) are not equal zero. Note that they are not equal zero $\mathbb{P}$-almost surely. We start with the following lemma.

\begin{lemma}\label{lemma1} 
Let $J$ denote $ [0,T] $ or $ [0,T+\delta] $. Then we have, for all $ a\in[0,\delta) $,
\begin{align}\label{limit1} 
\frac{ \sum_{i: \overline{I_{i}^{l,n}}\in J} B^{l}((I_{i}^{l,n})_{a})^{2} }{ \sum_{i: \overline{I_{i}^{l,n}} \in J}|I_{i}^{l,n}|^{2H_{l}} } \to^{p} 1
\end{align}
as $n\to\infty$ for $l=1,2$.
\end{lemma}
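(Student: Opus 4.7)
\textbf{Proof plan for Lemma~\ref{lemma1}.} My strategy is to condition on the observation times via (B1), expand each squared fBM increment through the product formula, control the resulting second-chaos remainder in conditional $L^{2}$ via the isometry, and conclude by Chebyshev together with (B3).

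I first condition on $\sigma(\mathcal{T})$, which renders the family $\{I_{i}^{l,n}\}$ deterministic; stationarity of fBM increments then shows the conditional joint law of $(B^{l}((I_{i}^{l,n})_{a}))_{i}$ does not depend on $a \in [0,\delta)$, so I may take $a=0$. Write $N_{n}$, $D_{n}$ for the numerator and denominator in the lemma. Representing $B^{l}(I_{i}^{l,n}) = \mathbb{I}_{1}(h_{i})$ as in (\ref{fBM3})--(\ref{def1}), with $\|h_{i}\|^{2} = |I_{i}^{l,n}|^{2H_{l}}$ and $\langle h_{i}, h_{j}\rangle = \mathbb{E}\{B^{l}(I_{i}^{l,n})B^{l}(I_{j}^{l,n})\}$, the product formula (Theorem~\ref{theorem2}) yields
\[
B^{l}(I_{i}^{l,n})^{2} = |I_{i}^{l,n}|^{2H_{l}} + \mathbb{I}_{2}(h_{i}\otimes h_{i}),
\]
and summation gives $N_{n} - D_{n} = \mathbb{I}_{2}\bigl(\sum_{i} h_{i} \otimes h_{i}\bigr)$, an element of the second Wiener chaos.

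By the isometry property (Proposition~\ref{proposition3}) applied conditionally on $\sigma(\mathcal{T})$,
\[
\mathbb{E}\{(N_{n}-D_{n})^{2} \mid \sigma(\mathcal{T})\} = 2\,\mathrm{tr}(C_{n}^{2}),
\]
where $C_{n}$ is the covariance matrix of the family $(B^{l}(I_{i}^{l,n}))_{i : \overline{I_{i}^{l,n}} \in J}$. The crude estimate $\mathrm{tr}(C_{n}^{2}) \leq \mathrm{tr}(C_{n})^{2} = D_{n}^{2}$ is too weak, so instead I will exploit the fact that every entry of $C_{n}$ is nonnegative, which holds precisely because $H_{l} > 1/2$ (the integral representation $\mathrm{Cov}(B^{l}(I),B^{l}(I')) = H_{l}(2H_{l}-1)\iint_{I\times I'}|u-v|^{2H_{l}-2}\,du\,dv$ is a nonnegative integral). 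By Gershgorin, $\|C_{n}\|_{\mathrm{op}}$ is dominated by the maximum row sum; since $B^{l}_{0}=0$, each row sum telescopes to a single covariance $\mathrm{Cov}(B^{l}(I_{i}^{l,n}), B^{l}(t_{\ast}^{n}))$, with $t_{\ast}^{n}$ the largest endpoint of intervals included in $J$. Cauchy--Schwarz then gives $\|C_{n}\|_{\mathrm{op}} \leq (T+\delta)^{H_{l}} r_{n}^{H_{l}}$, so
\[
\mathrm{tr}(C_{n}^{2}) \leq \|C_{n}\|_{\mathrm{op}}\,\mathrm{tr}(C_{n}) \leq (T+\delta)^{H_{l}} r_{n}^{H_{l}} D_{n}.
\]

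Finally, I will apply (B3) with $\mu := 1 - H_{l} \in (0, 1/2)$, giving $r_{n}^{H_{l}}/D_{n} = O_{p}(v_{n}^{\gamma(1-H_{l})}) \to^{p} 0$. The conditional Chebyshev inequality then yields $(N_{n} - D_{n})/D_{n} \to^{p} 0$, as required. The main obstacle I anticipate is precisely the bound on $\mathrm{tr}(C_{n}^{2})$: the long-range positive dependence of fBM increments for $H_{l}>1/2$ makes the off-diagonal sum $\sum_{i \neq j} \mathrm{Cov}^{2}$ a priori large, but the same positivity permits the telescoping shortcut above and avoids a delicate direct estimate of the double sum.
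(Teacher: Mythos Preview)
Your proof is correct and follows essentially the same strategy as the paper: condition on $\sigma(\mathcal{T})$, express $N_n-D_n$ as a second-chaos integral via the product formula, compute its conditional second moment via the isometry as $2\sum_{i,j}\langle h_i,h_j\rangle^2=2\,\mathrm{tr}(C_n^2)$, and bound this by exploiting nonnegativity of fBM-increment covariances for $H_l>1/2$ together with telescoping. The only cosmetic differences are that (i) the paper keeps the shift $a$ throughout rather than invoking stationarity, (ii) the paper bounds $\sum_{i,j}\langle h_i,h_j\rangle^2\le r_n^{2H_l}\sum_{i,j}\langle h_i,h_j\rangle\lesssim r_n^{2H_l}$ directly, whereas you reach the equivalent conclusion via $\mathrm{tr}(C_n^2)\le\|C_n\|_{\mathrm{op}}\,\mathrm{tr}(C_n)$ and Gershgorin, and (iii) the paper passes from the $O_p$ bound of (B3) to convergence in probability by localizing on an event $A_n(K)$, while your conditional-Chebyshev route needs the (implicit) observation that the conditional probability is also bounded by $1$, so bounded convergence turns $r_n^{H_l}/D_n\to^p 0$ into $\mathbb{P}\{|N_n/D_n-1|\ge\epsilon\}\to 0$.
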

\begin{proof}
First we define an auxiliary set $ A_{n}(K) $ by 
\begin{align*}
A_{n}(K) = \bigcap_{l=1,2} \left\{  \frac{r_{n}^{2H_{l}-1+\mu}v_{n}^{-\gamma(\mu)}}{\sum_{i:\overline{I_{i}^{l,n}} \leq T }| I_{i}^{l,n} |^{2H_{l}}} \leq K \right\}. 
\end{align*}
Since $ \sum_{i:\overline{I_{i}^{l,n}} \leq T }| I_{i}^{l,n} |^{2H_{l}} \leq T r_{n}^{2H_{l}-1}$ holds, we have 
\[
r_{n} \leq (TK)^{1/\mu} v_{n}^{\gamma(\mu)/\mu}, 
\]
on the set $A_{n}(K)$. In particular, it holds that $ ( r_{n}/v_{n} )1_{A_{n}(K)} \to 0 $ as $n\to\infty$ (recall that $\gamma(\mu) >\mu$). Thanks to (B3), it holds that $ \sup_{n}\mathbb{P}(A_{n}(K)^{c}) < \epsilon $ for each $\epsilon>0$ if $ K= K(\epsilon) $ is sufficiently large. 
Let $R^{l,n}_{1} $ denote $ (\sum_{i : \overline{I_{i}^{l,n}} \in J}B^{l}((I_{i}^{l,n})_{a})^{2})/(\sum_{i:\overline{I_{i}^{l,n}} \in J}|I_{i}^{l,n}|^{2H_{l}} )$ for simplicity. For any positive number $\epsilon_{1} >0 $, we have
\begin{align*}
\mathbb{P}\left\{ \left| R^{l,n}_{1} -1 \right| \geq \epsilon_{1} \right\} &\leq \mathbb{P}\left\{ \left| R^{l,n}_{1} -1 \right| \geq \epsilon_{1} ,\ A_{n}(K) \right\}  + \mathbb{P}\{ A_{n}(K)^{c} \}.
\end{align*}
Hence we obtain 
\begin{align} \label{ineq16}
\nonumber \mathbb{P}\left\{ \left| R^{l,n}_{1} -1 \right| \geq \epsilon_{1} \right\} &\leq \mathbb{P}\left\{ \left| R^{l,n}_{1} -1 \right| \geq\epsilon_{1},\ A_{n}(K(\epsilon_{2})) \right\} + \epsilon_{2} \\
&\leq \epsilon_{1}^{-2} \mathbb{E}\left\{ \left| R^{l,n}_{1} -1 \right|^{2} \mathbf{1}_{A_{n}(K(\epsilon_{2})) } \right\} + \epsilon_{2}. 
\end{align}
By using [B1] and conditioning, we can calculate the expectation in (\ref{ineq16}) as
\begin{align*}
& \mathbb{E}\left\{ \left| R^{l,n}_{1} -1 \right|^{2} \mathbf{1}_{A_{n}(K(\epsilon_{2})) } \right\}\\
&= \mathbb{E}\left\{ \mathbb{E}\left\{ \left( \frac{ \sum_{i : \overline{I_{i}^{l,n}} \in J}( B^{l}( (I_{i}^{l,n})_{a} )^{2} - | I_{i}^{l,n}  |^{2H_{l}}   )}{ \sum_{i : \overline{I_{i}^{l,n}} \in J}| I_{i}^{l,n} |^{2H_{l}} }  \right)^{2} \mathbf{1}_{A_{n}(K(\epsilon_{2})) } \middle| \sigma(\mathcal{T}) \right\} \right\} \\
&= \mathbb{E}
 \left\{
 \frac{ \mathbf{1}_{A_{n}(K(\epsilon_{2}))} \mathbb{E}
 \left\{
 \left( \sum_{i : \overline{I_{i}^{l,n}} \in J}\left(B^{l}((I_{i}^{l,n})_{a} )^{2} - | I_{i}^{l,n} |^{2H_{l}} \right)  \right)^{2} 
 \middle| \sigma(\mathcal{T})
 \right\}  }{ (\sum_{i : \overline{I_{i}^{l,n}} \in J}|I_{i}^{l,n} |^{2H_{l}})^{2} }
 \right\}.
\end{align*} 

Since $ B^{{l}}((I_{i}^{l,n})_{a} )^{2} - | (I_{i}^{l,n}) |^{2H_{l}} = B^{{l}}((I_{i}^{l,n})_{a} )^{2} - | (I_{i}^{l,n})_{a} |^{2H_{l}} $ coincides with the multiple Wiener integral $ \mathbb{I}_{2}( h^{l}( (I_{i}^{l,n})_{a} )^{\otimes 2}  ) $ (recall that the definition of $h^{l}(I)$ is given in (\ref{def1})) by Theorem \ref{theorem2}, we have 
\begin{align*}
 \mathbb{E}
 \left\{
 \left( \sum_{i : \overline{I_{i}^{l,n}} \in J}\left(B^{l}((I_{i}^{l,n})_{a})^{2} - | I_{i}^{l,n} |^{2H_{l}} \right)  \right)^{2} 
 \middle| \sigma(\mathcal{T})
 \right\}
 &= \mathbb{E}\left\{ \mathbb{I}_{2}^{2} \left( \sum_{i : \overline{I_{i}^{l,n}} \in J} h^{l}( (I_{i}^{l,n})_{a} )^{\otimes 2} \right) \middle| \sigma(\mathcal{T}) \right\} \\
 &= \sum_{i,j} \langle h^{l}( (I_{i}^{l,n})_{a} ) , h^{l}( (I_{j}^{l,n})_{a} ) \rangle_{L^{2}([0,T+2\delta];\mathbb{R}^{2})}^{2} \\ 
 &= \sum_{i,j} \langle \mathbf{1}_{ (I_{i}^{l,n})_{a} } , \mathbf{1}_{ (I_{j}^{l,n})_{a} } \rangle_{\mathcal{H}_{H_{l}} }
\end{align*}
Since $ | \langle \mathbf{1}_{ (I_{i}^{l,n})_{a} } , \mathbf{1}_{ (I_{j}^{l,n})_{a} } \rangle_{\mathcal{H}_{H_{l}} } | \leq r_{n}^{2H_{l}} $ because of Cauchy-Schwarz inequality, we have 
\begin{align*}
 \mathbb{E}
 \left\{
 \left( \sum_{i : \overline{I_{i}^{l,n}} \in J}\left(B^{l}((I_{i}^{l,n})_{a})^{2} - | I_{i}^{l,n} |^{2H_{l}} \right)  \right)^{2} 
 \middle| \sigma(\mathcal{T})
 \right\}
&\leq r_{n}^{2H_{l}} \sum_{i,j} \langle \mathbf{1}_{ (I_{i}^{l,n})_{a} } , \mathbf{1}_{ (I_{j}^{l,n})_{a} } \rangle_{\mathcal{H}_{H_{l}} } \\
 &\lesssim r_{n}^{2H_{l}}.
\end{align*}
Note that $ \langle \mathbf{1}_{ (I_{i}^{l,n})_{a} } , \mathbf{1}_{ (I_{j}^{l,n})_{a} } \rangle_{\mathcal{H}_{H_{l}} } $ is non-negative. Plugging this into (\ref{ineq16}), we obtain 
\begin{align*}
\mathbb{P}\left\{ \left| R^{l,n}_{1} -1 \right| \geq \epsilon_{1} \right\} &\lesssim \epsilon_{1}^{-2} \mathbb{E}\left\{ \left( \frac{ r_{n}^{H_{l} } }{ \sum_{i : \overline{I_{i}^{l,n}} \in J}| I_{i}^{l,n} |^{2H_{l}} } \right)^{2} \mathbf{1}_{A_{n}(K(\epsilon_{2}))} \right\} + \epsilon_{2} \\
&=  \epsilon_{1}^{-2} \mathbb{E}\left\{ \left( \frac{ r_{n}^{2H_{l}-1+\mu } v_{n}^{-\gamma(\mu)} }{ \sum_{i : \overline{I_{i}^{l,n}} \in J}| I_{i}^{l,n} |^{2H_{l}} } r_{n}^{1-H_{l}-\mu} v_{n}^{\gamma(\mu)} \right)^{2} \mathbf{1}_{A_{n}(K(\epsilon_{2}))} \right\} + \epsilon_{2} \\
&\lesssim \epsilon_{1}^{-2} (TK(\epsilon_{2}))^{2/\mu} v_{n}^{2(1 -H_{l})} + \epsilon_{2},
\end{align*}
if we choose sufficiently small $\mu>0$ such that $1-H-\mu>0$ holds. Since $\epsilon_{1}$ and $\epsilon_{2}$ are arbitrary, we obtain (\ref{limit1}) by letting $n\to\infty$.
\end{proof}

Let us set
\begin{align*}
\nonumber R_{2}^{n}(\tilde{\theta}) &= \mathbf{1}_{\Theta_{\geq0}}(\tilde{\theta})\sum_{i,j:\overline{I_{i}^{1,n}}\leq T} B^{1}((I_{i}^{1,n})_{\theta})B^{2}(I_{j}^{2,n})\mathbf{1}_{I_{i}^{1,n}\cap (I_{j}^{2,n})_{-\tilde{\theta}}\neq \emptyset} \\
& \quad + \mathbf{1}_{\Theta_{<0}}(\tilde{\theta})\sum_{i,j:\overline{I_{j}^{2,n}}\leq T} B^{1}((I_{i}^{1,n})_{\theta})B^{2}(I_{j}^{2,n})\mathbf{1}_{(I_{i}^{1,n})_{\tilde{\theta}}\cap (I_{j}^{2,n})\neq \emptyset},
\end{align*}
and
\begin{align*}
\nonumber D^{n}(\tilde{\theta}) &= \mathbf{1}_{\Theta_{\geq0}}(\tilde{\theta})\sqrt{\sum_{i:\overline{I_{i}^{1,n}}\leq T}| I_{i}^{1,n} |^{2H_{1}} } \sqrt{ \sum_{j}|I_{j}^{2,n}|^{2H_{2}} } 
\\& \quad + \mathbf{1}_{\Theta_{<0}}(\tilde{\theta})\sqrt{\sum_{i}| I_{i}^{1,n} |^{2H_{1}} } \sqrt{ \sum_{j:\overline{I_{j}^{2,n}}\leq T}|I_{j}^{2,n}|^{2H_{2}} }.
\end{align*}

Lemma \ref{lemma1} reduces calculation of $\mathcal{U}_{n}(\tilde{\theta})$ to that of $R_{2}^{n}(\tilde{\theta})/D^{n}(\tilde{\theta})$. Next we show that $R_{2}^{n}(\tilde{\theta})/D^{n}(\tilde{\theta})$ goes to zero as $n\to\infty$ if $\tilde{\theta}$ is distant from $\theta$.

\begin{proposition}\label{lemma2}
We have, for any $\epsilon>0$,
\begin{align}\label{limit3} 
\sup_{\mathcal{G}^{n} \cap \{\tilde{\theta}\in\Theta \mid |\tilde{\theta}-\theta|\geq\epsilon v_{n}\} }\left| \frac{R^{n}_{2}(\tilde{\theta})}{D^{n}(\tilde{\theta})} \right| \to^{p} 0
\end{align}
as $n\to\infty$.
\end{proposition}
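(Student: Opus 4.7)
The plan is to control $|R_2^n(\tilde\theta)/D^n(\tilde\theta)|$ in high $L^q$-norm pointwise in $\tilde\theta$, and then close via a union bound over $\mathcal{G}^n\cap\{|\tilde\theta-\theta|\ge\epsilon v_n\}$, whose cardinality is $O(v_n^{-\beta})$ by (C2). Throughout I condition on $\sigma(\mathcal{T})$; by (B1) this leaves $B^1,B^2$ with their original laws. Applying the product formula (Theorem \ref{theorem2}) termwise yields
\[
B^1((I_i^{1,n})_\theta) B^2(I_j^{2,n}) = \langle h^1((I_i^{1,n})_\theta), h^2(I_j^{2,n})\rangle + \mathbb{I}_2\bigl(h^1((I_i^{1,n})_\theta) \tilde\otimes h^2(I_j^{2,n})\bigr),
\]
and summing over the index set $\Lambda_{\tilde\theta}:=\{(i,j):\overline{I_i^{1,n}}\le T,\ (I_i^{1,n})_{\tilde\theta}\cap I_j^{2,n}\ne\emptyset\}$ (for $\tilde\theta\ge 0$) gives the decomposition $R_2^n(\tilde\theta) = M_n(\tilde\theta) + Z_n(\tilde\theta)$, conditional mean plus a second-chaos remainder.

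For the mean $M_n(\tilde\theta)$: by Proposition \ref{fBM2} each summand equals $\rho$ times an integral over $(I_i^{1,n})_\theta\times I_j^{2,n}$ of the kernel $c_{H_1}c_{H_2}\beta(u,v)|u-v|^{H_1+H_2-2}u^{H_1-H_2}v^{H_2-H_1}$. For $(i,j)\in\Lambda_{\tilde\theta}$ the two rectangle factors are separated by a distance of order $|\tilde\theta-\theta|\ge\epsilon v_n$, and since (B3) forces $r_n=o_p(v_n)$ the singular factor $|u-v|^{H_1+H_2-2}$ is uniformly bounded by $C|\tilde\theta-\theta|^{H_1+H_2-2}$ on a high-probability event. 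Counting pairs via $\sum_{j\in\Lambda_{\tilde\theta}(i)}|I_j^{2,n}|\le |I_i^{1,n}|+2r_n$ yields $|M_n(\tilde\theta)|\lesssim |\tilde\theta-\theta|^{H_1+H_2-2}\,r_n$, and comparing with $D^n(\tilde\theta)$ through (B3) gives $|M_n/D^n|=O_p\bigl((r_n/|\tilde\theta-\theta|)^{2-H_1-H_2}\bigr)=o_p(1)$ uniformly over the relevant $\tilde\theta$, since $2-H_1-H_2>0$.

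For the second-chaos term $Z_n(\tilde\theta)$: the isometry in Proposition \ref{proposition3} expresses $\mathbb{E}[Z_n(\tilde\theta)^2\mid\sigma(\mathcal{T})]$ as a sum over quadruples $((i,j),(k,l))\in\Lambda_{\tilde\theta}^{\,2}$ of products of pairwise covariances of fBM increments. The decisive step is to collapse the sums over $j,l$ using additivity of $B^2$-increments: $\sum_{j\in\Lambda_{\tilde\theta}(i)}B^2(I_j^{2,n})=B^2(A_i)$ for a macro-interval $A_i$ approximately equal to $(I_i^{1,n})_{\tilde\theta}$ up to boundary pieces of length $r_n$. This reduces the first and leading double sum to $\sum_{i,k}R_{H_1}((I_i^{1,n})_\theta,(I_k^{1,n})_\theta)R_{H_2}(A_i,A_k)$ (plus analogous cross terms handled similarly), a weighted bilinear form in two positive semidefinite Gram matrices whose diagonals are $|I_i^{1,n}|^{2H_1}$ and $|A_i|^{2H_2}$. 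Exploiting the polynomial decay $|\rho_H(m)|\lesssim|m|^{2H-2}$ of fractional Gaussian noise correlations together with (B4) produces $\mathbb{E}[Z_n^2\mid\sigma(\mathcal{T})]/D^n(\tilde\theta)^2\le \xi_n$ for a deterministic $\xi_n\to 0$ of polynomial order in $v_n$. Hypercontractivity (Theorem \ref{theorem3}) then lifts this to $\mathbb{E}\{|Z_n/D^n|^{2q}\mid\sigma(\mathcal{T})\}\le C(q)\,\xi_n^{q}$ for every $q\ge 1$.

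Choosing $q$ so large that $\xi_n^{q}\cdot v_n^{-\beta}\to 0$, Chebyshev's inequality applied pointwise and then union-bounded over $\mathcal{G}^n$ gives (\ref{limit3}). The main obstacle is the polynomial decay of $\xi_n$: a naive Cauchy–Schwarz on the inner products $\langle h^l(I),h^{l'}(J)\rangle$ yields a variance bound of the same order as $D^n(\tilde\theta)^2$, which no amount of hypercontractivity can beat against the polynomial grid cardinality $v_n^{-\beta}$. Extracting a genuinely polynomial gain requires the macro-interval collapse $A_i$ and the fGn-correlation decay, balanced by (B4); once any polynomial $v_n$-decay is available at the level of $L^2$, the free exponent $q$ in hypercontractivity closes the argument.
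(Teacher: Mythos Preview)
Your overall architecture matches the paper: split $R_2^n(\tilde\theta)$ into its conditional mean (the paper's $R_3^n$) and a second-chaos remainder (the paper's $R_4^n$), handle the mean by the interval-separation bound $|u-v|\gtrsim \epsilon v_n$, then attack the chaos term via hypercontractivity and a union bound over $\mathcal{G}^n$. Your treatment of the mean is essentially the paper's.

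The divergence, and the weak spot, is in how you bound the conditional variance of the chaos term. You claim to need the fGn correlation decay $|\rho_H(m)|\lesssim|m|^{2H-2}$ together with (B4) to squeeze out a polynomial $\xi_n$. The paper uses neither ingredient here ((B4) enters only later, in Proposition~\ref{lemma5}). After the same macro-interval collapse you describe, the paper splits the variance as $R_{5,1}^n+R_{5,2}^n$. For $R_{5,1}^n=\sum_{i,k}\langle h^1((I_i^{1,n})_\theta),h^1((I_k^{1,n})_\theta)\rangle\,\langle h^2(A_i),h^2(A_k)\rangle$ it simply bounds $\langle h^2(A_i),h^2(A_k)\rangle\le r_n^{2H_2}$ uniformly by Cauchy--Schwarz on the macro-intervals, and then uses nonnegativity and additivity of the $h^1$-inner products to sum the remaining factor to a constant, giving $|R_{5,1}^n|\lesssim r_n^{2H_2}$. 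Compared with $D^n(\tilde\theta)^2$ via (B3), this already yields a polynomial gain $v_n^{1-H_1}$ per $L^2$-moment. So your worry that ``a naive Cauchy--Schwarz \ldots\ yields a variance bound of the same order as $D^n(\tilde\theta)^2$'' is misplaced: Cauchy--Schwarz on one factor plus telescoping on the other suffices, with no correlation decay needed.

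The ``analogous cross terms'' you dismiss in a phrase are \emph{not} handled analogously. The cross part $R_{5,2}^n$ involves inner products $\langle h^1((I_i^{1,n})_\theta), h^2(A_k)\rangle$ between intervals that are separated by at least $\epsilon v_n/3$ precisely because $\tilde\theta$ is far from $\theta$; the paper re-uses the same separation bound you applied to the mean to get $|R_{5,2}^n|\lesssim r_n^{1+H_2}v_n^{H_1+H_2-2}$. Your proposal misses this mechanism entirely. Moreover, the fGn lag-decay bound you invoke is for equispaced increments and does not transfer cleanly to the irregular sampling allowed by Assumption~\ref{ass2}, and you give no indication of how (B4) would enter the estimate. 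In short, the polynomial $\xi_n$ you need is available, but by a much simpler route than the one you sketch; as written, your variance step is not justified.
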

\begin{proof}
The set $ \mathcal{G}^{n}\cap\{\tilde{\theta}\in\Theta\mid |\tilde{\theta}-\theta|\geq\epsilon v_{n}\} $ can be decomposed as
\begin{align*}
\nonumber  \mathcal{G}^{n}\cap\{\tilde{\theta}\in\Theta \mid|\tilde{\theta}-\theta|\geq\epsilon v_{n}\} &= ( \mathcal{G}^{n}_{\geq0}\cap\{\tilde{\theta}\in\Theta \mid \tilde{\theta} \geq \theta + \epsilon v_{n} \} ) \cup ( \mathcal{G}^{n}_{\geq0}\cap\{\tilde{\theta}\in\Theta \mid \tilde{\theta} \leq \theta - \epsilon v_{n} \} ) \\
\nonumber & \quad \cup (\mathcal{G}^{n}_{<0}\cap \{ \tilde{\theta}\in\Theta \mid \tilde{\theta} \leq \theta - \epsilon v_{n} \}) \\
&=: \mathcal{G}^{n}_{1} \cup \mathcal{G}^{n}_{2} \cup \mathcal{G}^{n}_{3}.
\end{align*}
Clearly (\ref{limit3}) is equivalent to 
\begin{align}\label{limit4} 
\sup_{\mathcal{G}^{n}_{i}}\left| \frac{R^{n}_{2}(\tilde{\theta})}{D^{n}(\tilde{\theta})} \right| \to^{p} 0
\end{align}
for $i=1,2,3$. We only prove $(\ref{limit4})$ when $i=1$. The other cases are similar.

Let us assume $ H_{1} \leq H_{2} $ for the moment. Since $ \tilde{\theta} \in \mathcal{G}^{n}_{1} $, we have 
\begin{align*}
\nonumber R_{2}^{n}(\tilde{\theta}) &= \sum_{i,j:\overline{I_{i}^{1,n}}\leq T} B^{1}( (I_{i}^{1,n})_{\theta} )B^{2}((I_{j}^{2,n}))\mathbf{1}_{ (I_{i}^{1,n})_{\theta}\cap (I_{j}^{2,n})_{\theta-\tilde{\theta}}\neq \emptyset} \\
\nonumber &= \sum_{i:\overline{I_{i}^{1,n}}\leq T} B^{1}( (I_{i}^{1,n})_{\theta} )B^{2}(\mathcal{I}^{2,n}_{\theta-\tilde{\theta}}((I_{i}^{1,n})_{\theta})_{\tilde{\theta}-\theta}) \\
\nonumber &= \sum_{i:\overline{I_{i}^{1,n}}\leq T} \langle h^{1}( (I_{i}^{1,n})_{\theta} ) , h^{2}( \mathcal{I}^{2,n}_{\theta-\tilde{\theta}}((I_{i}^{1,n})_{\theta})_{\tilde{\theta}-\theta} ) \rangle_{L^{2}([0,T+2\delta];\mathbb{R}^{2})} \\
\nonumber &\quad +\sum_{i:\overline{I_{i}^{1,n}}\leq T} \Bigl( B^{1}( (I_{i}^{1,n})_{\theta} )B^{2}(\mathcal{I}^{2,n}_{\theta-\tilde{\theta}}((I_{i}^{1,n})_{\theta})_{\tilde{\theta}-\theta}) \\
\nonumber &\quad- \langle h^{1}( (I_{i}^{1,n})_{\theta} ) , h^{2}( \mathcal{I}^{2,n}_{\theta-\tilde{\theta}}((I_{i}^{1,n})_{\theta})_{\tilde{\theta}-\theta} ) \rangle_{L^{2}([0,T+2\delta];\mathbb{R}^{2})} \Bigr) \\
&=: R_{3}^{n}(\tilde{\theta}) + R_{4}^{n}(\tilde{\theta}).
\end{align*}

Hence it suffices to show
\begin{align}\label{limit20}
\sup_{\tilde{\theta}\in\mathcal{G}^{n}_{1}} \left| \frac{R_{3}^{n}(\tilde{\theta})}{D^{n}(\tilde{\theta})} \right| \to^{p} 0
\end{align}
and
\begin{align}\label{limit21}
\sup_{\tilde{\theta} \in \mathcal{G}_{1}^{n} }\left| \frac{R_{4}^{n}(\tilde{\theta})}{D^{n}(\tilde{\theta})} \right| \to^{p} 0
\end{align}
as $n\to\infty$. 

\textbf{The limit (\ref{limit20}).} 
It suffices to show 
\begin{align}\label{limit2} 
\sup_{\tilde{\theta}\in\mathcal{G}^{n}_{1}} \left| \frac{R_{3}^{n}(\tilde{\theta})}{D^{n}(\tilde{\theta})} \right| \mathbf{1}_{A_{n}(K)} \to 0
\end{align}
as $n\to\infty$ for each $K>0$. By (\ref{calc4}) in Proposition \ref{fBM2}, it holds that 
\begin{align*}
\nonumber |R_{3}^{n}(\tilde{\theta})| &= |\rho| c_{H_{1}}c_{H_{2}} \sum_{i:\overline{I_{i}^{1,n}}\leq T} \int_{ (I_{i}^{1,n})_{\theta} }du\int_{\mathcal{I}^{2,n}_{\theta-\tilde{\theta}}((I_{i}^{1,n})_{\theta})_{\tilde{\theta}-\theta}}dv\ \beta(u,v) \\&\quad \times |u-v|^{H_{1}+H_{2}-2} u^{H_{1}-H_{2}} v^{H_{2}-H_{1}}.
\end{align*}
Note that for each $K>0$ and $\epsilon>0$ we can choose $n_{0}=n_{0}(K,\epsilon)$ such that $n\geq n_{0}$ implies 
\begin{align*}
(TK)^{1/\mu}v_{n}^{(\gamma(\mu)/\mu)-1} \leq \frac{\epsilon}{3}.
\end{align*}
Hence if $n\geq n_{0}$ then $r_{n}\mathbf{1}_{A_{n}(K)} \leq (\epsilon v_{n}/3)$ holds. We can always assume $n\geq n_{0}$ in this proof. 
Since $ \tilde{\theta} \in \mathcal{G}^{n}_{1} $ and $r_{n}\leq(\epsilon v_{n})/3$ on $A_{n}(K)$, we have 
$|u-v| \geq (\epsilon v_{n})/3$. Therefore 
\begin{align} \label{ineq3} 
| R_{3}^{n}(\tilde{\theta}) |\lesssim (\epsilon v_{n})^{H_{1}+H_{2}-2} \sum_{i:\overline{I_{i}^{1,n}}\leq T} \int_{ (I_{i}^{1,n})_{\theta} }du\ u^{H_{1}-H_{2}} \int_{\mathcal{I}^{2,n}_{\theta-\tilde{\theta}}((I_{i}^{1,n})_{\theta})_{\tilde{\theta}-\theta}}dv\ v^{H_{2}-H_{1}}
\end{align}
on $A_{n}(K)$. Since $H_{1}\leq H_{2}$, we have 
\begin{align} \label{ineq8}
\int_{\mathcal{I}^{2,n}_{\theta-\tilde{\theta}}((I_{i}^{1,n})_{\theta})_{\tilde{\theta}-\theta}}dv\ v^{H_{2}-H_{1}} \lesssim r_{n}.
\end{align}
Since the integral $ \int_{ (I_{i}^{1,n})_{\theta} }du\ u^{H_{1}-H_{2}} $ is summable (note that $ H_{1}-H_{2} \in (-1/2,1/2) $), we have 
\begin{align}\label{ineq1} 
| R_{3}^{n}(\tilde{\theta}) |\lesssim (\epsilon v_{n})^{H_{1}+H_{2}-2} r_{n}.
\end{align}
By (\ref{ineq1}) and (B3), we have
\begin{align}\label{ineq2} 
\left| \frac{R_{3}^{n}(\tilde{\theta})}{D^{n}(\tilde{\theta})} \right| \mathbf{1}_{A_{n}(K)} \lesssim v_{n}^{\gamma(\mu)-\mu} \mathbf{1}_{A_{n}(K)}.
\end{align} 
Since the right-hand-side of (\ref{ineq2}) does not depend on $\tilde{\theta}$, we obtain (\ref{limit2}).

\textbf{The limit (\ref{limit21}).} 
We define $ \tilde{h}_{i}^{n}(\tilde{\theta}) \in L^{2}([0,T+2\delta];\mathbb{R}^{2}) \tilde{\otimes} L^{2}([0,T+2\delta];\mathbb{R}^{2}) $ by
\begin{align*}
\tilde{h}_{i}^{n}(\tilde{\theta})(\omega) = h^{1}( (I_{i}^{1,n})_{\theta}(\omega) ) \tilde{\otimes} h^{2}( \mathcal{I}^{2,n}_{\theta-\tilde{\theta}}((I_{i}^{1,n})_{\theta})_{\tilde{\theta}-\theta}(\omega) ).
\end{align*}
Let $ \epsilon_{1} $ and $\epsilon_{2}$ be positive numbers. By the same conditioning technique as in the proof of Lemma \ref{lemma1}, we have 
\begin{align}\label{ineq6} 
\mathbb{P}\left\{ \sup_{\tilde{\theta} \in \mathcal{G}^{n}_{1} }\left| \frac{R_{4}^{n}(\tilde{\theta})}{D^{n}(\tilde{\theta})} \right| \geq \epsilon_{1} \right\} \leq \epsilon_{2}  + \epsilon_{1}^{-2p}\sum_{ \tilde{\theta} \in \mathcal{G}^{n}_{1} } \mathbb{E}\left\{ \frac{\mathbf{1}_{A_{n}(K(\epsilon_{2}))} \mathbb{E}\left\{ \left| \mathbb{I}_{2}(\sum_{i:\overline{I_{i}^{1,n}}\leq T} \tilde{h}^{n}_{i}(\tilde{\theta}) ) \right|^{2p} \middle| \sigma(\mathcal{T}) \right\} }{|D^{n}(\tilde{\theta})|^{2p}} \right\}
\end{align}
for any $p\geq 1$. Using Theorem \ref{theorem3}, we obtain 
\begin{align}\label{ineq7} 
 \mathbb{E}\left\{ \left| \mathbb{I}_{2}\left(\sum_{i:\overline{I_{i}^{1,n}}\leq T} \tilde{h}^{n}_{i}(\tilde{\theta})\right) \right|^{2p}\middle| \sigma(\mathcal{T}) \right\} \leq C_{p} \mathbb{E} \left\{ \left| \mathbb{I}_{2}\left(\sum_{i:\overline{I_{i}^{1,n}}\leq T} \tilde{h}^{n}_{i}(\tilde{\theta})\right) \right|^{2} \middle| \sigma(\mathcal{T}) \right\}^{p}
\end{align}
where $C_{p}$ is a positive constant depending only on $p\geq1$. Let $R_{5}^{n}(\tilde{\theta})$ denote the expectation in the right-hand-side of (\ref{ineq7}). A simple calculation using  Proposition \ref{proposition3} yields
\begin{align*}
\nonumber R_{5}^{n}(\tilde{\theta}) &= \sum_{i,k:\overline{I_{i}^{1,n}}\leq T,\ \overline{I_{k}^{2,n}}\leq T} \langle h^{1}( (I_{i}^{1,n})_{\theta} ), h^{1}( (I_{k}^{1,n})_{\theta} ) \rangle_{L^{2}([0,T+2\delta] ; \mathbb{R}^{2})} \\
\nonumber &\quad\times \langle h^{2}( \mathcal{I}^{2,n}_{\theta-\tilde{\theta}}((I_{i}^{1,n})_{\theta})_{\tilde{\theta}-\theta}, h^{2}( \mathcal{I}^{2,n}_{\theta-\tilde{\theta}}((I_{k}^{1,n})_{\theta})_{\tilde{\theta}-\theta} \rangle_{L^{2}([0,T+2\delta] ; \mathbb{R}^{2})} \\
\nonumber &\quad + \sum_{i,k:\overline{I_{i}^{1,n}}\leq T,\ \overline{I_{k}^{2,n}}\leq T} \langle h^{1}( (I_{i}^{1,n})_{\theta} ),  h^{2}( \mathcal{I}^{2,n}_{\theta-\tilde{\theta}}((I_{k}^{1,n})_{\theta})_{\tilde{\theta}-\theta} \rangle_{L^{2}([0,T+2\delta] ; \mathbb{R}^{2})} \\
\nonumber &\quad\times \langle h^{1}( (I_{k}^{1,n})_{\theta} ),  h^{2}( \mathcal{I}^{2,n}_{\theta-\tilde{\theta}}((I_{i}^{1,n})_{\theta})_{\tilde{\theta}-\theta} \rangle_{L^{2}([0,T+2\delta] ; \mathbb{R}^{2})} \\
&\quad =: R_{5,1}^{n}(\tilde{\theta}) + R_{5,2}^{n}(\tilde{\theta}). 
\end{align*}

(1) The first term can be estimated as follows. Since 
\begin{align*}
\langle h^{2}( \mathcal{I}^{2,n}_{\theta-\tilde{\theta}}((I_{i}^{1,n})_{\theta})_{\tilde{\theta}-\theta}, h^{2}( \mathcal{I}^{2,n}_{\theta-\tilde{\theta}}((I_{k}^{1,n})_{\theta})_{\tilde{\theta}-\theta} \rangle_{L^{2}([0,T+2\delta] ; \mathbb{R}^{2})} \lesssim r_{n}^{2H_{2}},
\end{align*}
we have 
\begin{align}\label{ineq4} 
| R_{5,1}^{n}(\tilde{\theta}) | \lesssim r_{n}^{2H_{2}} 
\end{align}
on $A_{n}(K(\epsilon_{2}))$. Note that $ I \mapsto h^{1}(I) $ is linear so that 
\[
\sum_{i,k: \overline{I_{i}^{1,n}} \leq T,\ \overline{I_{k}^{1,n}}\leq T} \langle h^{1}( (I_{i}^{1,n})_{\theta} ), h^{1}( (I_{k}^{1,n})_{\theta} ) \rangle_{L^{2}([0,T+2\delta] ; \mathbb{R}^{2})}  < \infty.
\]

(2) The second term can be estimated as follows. We first recall that 
\begin{align*}
&\quad\nonumber | \langle h^{1}( (I_{i}^{1,n})_{\theta} ),  h^{2}( \mathcal{I}^{2,n}_{\theta-\tilde{\theta}}((I_{k}^{1,n})_{\theta})_{\tilde{\theta}-\theta} \rangle_{L^{2}([0,T+2\delta] ; \mathbb{R}^{2})} | 
\\ \nonumber &= |\rho| c_{H_{1}}c_{H_{2}}  \int_{ (I_{i}^{1,n})_{\theta} }du\int_{\mathcal{I}^{2,n}_{\theta-\tilde{\theta}}((I_{k}^{1,n})_{\theta})_{\tilde{\theta}-\theta}}dv\ \beta(u,v) \\&\quad \times |u-v|^{H_{1}+H_{2}-2} u^{H_{1}-H_{2}} v^{H_{2}-H_{1}}.
\end{align*}
By the same reasoning as (\ref{ineq3}), we have
\begin{align*}
\langle h^{1}( (I_{i}^{1,n})_{\theta} ),  h^{2}( \mathcal{I}^{2,n}_{\theta-\tilde{\theta}}((I_{k}^{1,n})_{\theta})_{\tilde{\theta}-\theta} \rangle_{L^{2}([0,T+2\delta] ; \mathbb{R}^{2})} \lesssim \left( \frac{\epsilon v_{n}}{3} \right)^{H_{1}+H_{2}-2} r_{n} | I_{i}^{1,n} |
\end{align*} 
if $ i\leq k $, and 
\begin{align*}
\langle h^{1}( (I_{k}^{1,n})_{\theta} ),  h^{2}( \mathcal{I}^{2,n}_{\theta-\tilde{\theta}}((I_{i}^{1,n})_{\theta})_{\tilde{\theta}-\theta} \rangle_{L^{2}([0,T+2\delta] ; \mathbb{R}^{2})} \lesssim \left( \frac{\epsilon v_{n}}{3} \right)^{H_{1}+H_{2}-2} r_{n} | I_{k}^{1,n} |
\end{align*} 
if  $ i>k $, on $ A_{n}(K(\epsilon_{2})) $. Therefore, it holds that
\begin{align}\label{ineq5} 
|\nonumber R_{5,2}^{n}(\tilde{\theta})| &\lesssim r_{n}v_{n}^{H_{1}+H_{2}-2} \Biggl( \sum_{i\leq k } |I_{i}^{1,n}| |\langle h^{1}( (I_{k}^{1,n})_{\theta} ),  h^{2}(  \mathcal{I}^{2,n}_{\theta-\tilde{\theta}}((I_{i}^{1,n})_{\theta})_{\tilde{\theta}-\theta} \rangle_{L^{2}([0,T+2\delta] ; \mathbb{R}^{2})} | \\
\nonumber &\quad+ \sum_{k>i} |I_{k}^{1,n}| |\langle h^{1}( (I_{i}^{1,n})_{\theta} ),  h^{2}(  \mathcal{I}^{2,n}_{\theta-\tilde{\theta}}((I_{k}^{1,n})_{\theta})_{\tilde{\theta}-\theta} \rangle_{L^{2}([0,T+2\delta] ; \mathbb{R}^{2})} | \Biggr) \\
&\lesssim r_{n}^{1+H_{2}} v_{n}^{H_{1}+H_{2}-2}
\end{align}
on $A_{n}(K(\epsilon_{2}))$. Plugging (\ref{ineq4}) and (\ref{ineq5}) into (\ref{ineq6}), we obtain 
\begin{align*}
\mathbb{P}\left\{ \sup_{\tilde{\theta} \in \mathcal{G}_{1}^{n} }\left| \frac{R_{4}^{n}(\tilde{\theta})}{D^{n}(\tilde{\theta})} \right| \geq \epsilon_{1} \right\} &\lesssim \epsilon_{2} + \epsilon_{1}^{-2p} \sum_{ \tilde{\theta} \in \mathcal{G}_{1}^{n} } \mathbb{E}\left\{ \frac{ \mathbf{1}_{A_{n}(K(\epsilon_{2}))} |r_{n}^{2H_{2}} + r_{n}^{1+H_{2}}v_{n}^{H_{1}+H_{2}-2} |^{p} }{ |D^{n}(\tilde{\theta})|^{2p}  } \right\} \\ 
&\lesssim \epsilon_{2} + \epsilon_{1}^{-2p} (\# \mathcal{G}^{n}) v_{n}^{p(1-H_{1})}
\end{align*}
by (B3). Thanks to (C2), we have $\lim_{n\to\infty}(\# \mathcal{G}^{n}) v_{n}^{p(1-H_{1})}= 0$ if we choose sufficiently large $p\geq1$. This gives (\ref{limit21}).

Thanks to (\ref{limit20}) and (\ref{limit21}), we obtain $(\ref{limit4})$ when $ H_{1} \leq H_{2} $. Now let us consider the case where $ H_{1} > H_{2} $. In the case where $ H_{1} > H_{2} $, we use an alternative expression of $ R_{2}^{n}(\tilde{\theta}) $:
\begin{align}\label{calc5}
\nonumber  R_{2}^{n}(\tilde{\theta}) &= \sum_{i,j: \overline{I_{i}^{1,n}} \leq T} B^{1}((I_{i}^{1,n})_{\theta})B^{2}( I_{j}^{2,n} ) \mathbf{1}_{ (I_{i}^{1,n})_{\tilde{\theta}} \cap I_{j}^{2,n} \neq \emptyset } \\
&= \sum_{j} B^{2}(I_{j}^{2,n}) B^{1} ( \mathcal{I}_{\tilde{\theta}}^{1,n,\leq T} ( I_{j}^{2,n} )_{\theta-\tilde{\theta}} ),
\end{align}
where the symbol $\mathcal{I}_{\tilde{\theta}}^{1,n,\leq T}$ denotes the family of shifted intervals
\begin{align*}
\mathcal{I}_{\tilde{\theta}}^{1,n,\leq T} = \{ (I_{i}^{1,n})_{\tilde{\theta}} \mid \overline{I_{i}^{1,n}} \leq T \}.
\end{align*}
Using the expression (\ref{calc5}), we can prove (\ref{limit4}) when $ H_{1} > H_{2} $ in the same manner as we did in the case where $ H_{1} \leq H_{2} $.
\end{proof}



Let us show that $R_{2}^{n}(\tilde{\theta}_{n}) / D^{n}(\tilde{\theta}_{n})$ is away from zero if $|\tilde{\theta}_{n} - \theta|$ decreases rapidly as $n\to\infty$.

\begin{proposition} \label{lemma5} 
Suppose that $\tilde{\theta}_{n}$ satisfies $ | \theta- \tilde{\theta}_{n} | \leq \rho_{n} $ for all $n\in\mathbb{N}$. Then there exists $ c_{\ast}^{\prime} >0 $ such that
\begin{align} \label{limit6}
\mathbb{P} \left\{ \left| \frac{R_{2}^{n}(\tilde{\theta}_{n})}{D^{n}(\tilde{\theta}_{n})} \right| \geq c_{\ast}^{\prime} \right\} \to 1
\end{align}
as $n\to\infty$.
\end{proposition}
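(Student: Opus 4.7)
My plan is to follow the decomposition $R_2^n(\tilde\theta_n) = R_3^n(\tilde\theta_n) + R_4^n(\tilde\theta_n)$ from the proof of Proposition~\ref{lemma2}, where $R_3^n$ is the $\sigma(\mathcal{T})$-measurable sum of inner products and $R_4^n$ is the second Wiener chaos residual with conditional mean zero. To prove (\ref{limit6}) it suffices to establish (i) $|R_3^n(\tilde\theta_n)|/D^n(\tilde\theta_n)$ is bounded below by a positive constant on an event of probability tending to $1$, and (ii) $R_4^n(\tilde\theta_n)/D^n(\tilde\theta_n) \to^{p} 0$.

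Step (i) is the heart of the proof. Setting $U_i^n := \mathcal{I}_{\theta-\tilde\theta_n}^{2,n}((I_i^{1,n})_\theta)_{\tilde\theta_n-\theta}$ and applying the covariance formula (\ref{calc4}) from Proposition~\ref{fBM2} yields
\[
 R_3^n(\tilde\theta_n) = \rho\, c_{H_1}c_{H_2} \sum_{i:\overline{I_i^{1,n}}\leq T}\int_{(I_i^{1,n})_\theta}du\int_{U_i^n}dv\, \beta(u,v)|u-v|^{H_1+H_2-2}u^{H_1-H_2}v^{H_2-H_1}.
\]
The integrand is positive modulo the sign of $\rho$, so taking absolute values, restricting the $v$-integration to $(I_i^{1,n})_{\tilde\theta_n} \subseteq U_i^n$, and restricting the outer sum to $A_n := \{i:\underline{I_i^{1,n}}\geq \epsilon,\ \overline{I_i^{1,n}}\leq T\}$ gives a lower bound; on $A_n$ the factors $u^{H_1-H_2}v^{H_2-H_1}$ and $\beta(u,v)$ are bounded below by positive constants. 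Writing $h_i := |I_i^{1,n}|$ and $\tau_n := \theta - \tilde\theta_n$, a rescaling-and-compactness argument on $\int_0^{h_i}\int_0^{h_i}|s - t + \tau_n|^{H_1+H_2-2}ds\,dt$ shows it is $\geq c\, h_i^{H_1+H_2}$ whenever $h_i \geq \rho_n$ (the integrable diagonal singularity is attained, and $\tau_n/h_i$ stays in $[-1,1]$). Hence $|R_3^n(\tilde\theta_n)| \geq c'|\rho| \sum_{i \in A_n,\, h_i \geq \rho_n} h_i^{H_1+H_2}$.

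To finish (i), I bound the short-interval contribution by $\sum_{i \in A_n,\, h_i<\rho_n} h_i^{H_1+H_2} \leq \rho_n^{H_1+H_2-1}T$ and use the Jensen lower bound $D^n(\tilde\theta_n) \gtrsim (\#\mathcal{I}^{1,n})^{(1-2H_1)/2}(\#\mathcal{I}^{2,n})^{(1-2H_2)/2}$; the resulting ratio satisfies
\[
 \frac{\rho_n^{H_1+H_2-1}T}{D^n(\tilde\theta_n)} \lesssim (\rho_n\,\#\mathcal{I}^{1,n})^{(2H_1-1)/2}(\rho_n\,\#\mathcal{I}^{2,n})^{(2H_2-1)/2},
\]
after the exponent arithmetic $H_1+H_2-1-(2H_1-1)/2-(2H_2-1)/2=0$. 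By (C3), which yields $\rho_n^{1-\varsigma}\mathbb{E}\{\#\mathcal{I}^{l,n}\}\to 0$ and hence $\rho_n\cdot\#\mathcal{I}^{l,n}\to^{p} 0$, together with $H_l > 1/2$, this ratio vanishes in probability. Then (B2) applied to $\sum_{i\in A_n} h_i^{H_1+H_2}$ delivers the positive lower bound required for (i). For step (ii), I use Chebyshev with the conditional second-moment decomposition from the proof of (\ref{limit21}); bounding the analogues of $R_{5,1}^n$ and $R_{5,2}^n$ by $|\langle h^l(I),h^l(J)\rangle|\leq (|I||J|)^{H_l}$, $|U_i^n|\leq h_i+2r_n$, and (B3) yields $\mathbb{E}\{R_4^n(\tilde\theta_n)^2\mid\sigma(\mathcal{T})\}=o_p(D^n(\tilde\theta_n)^2)$, without paying a $\#\mathcal{G}^n$ factor since only one $\tilde\theta_n$ is evaluated.

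The main obstacle is the short-interval correction in (i): in contrast to Proposition~\ref{lemma2}, where $|u-v|\geq\epsilon v_n/3$ provides uniform control of the integrand, in the present ``close'' regime $|\tau_n|\leq\rho_n$ the double integral degrades for $h_i<\rho_n$. The delicate deterministic balancing of exponents via (C3) and the Jensen bound on $D^n$ described above is the key step that rescues the lower bound and makes the signal dominate the noise.
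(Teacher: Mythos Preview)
Your two–term split $R_2^n=R_3^n+R_4^n$ is a genuinely different route from the paper's three–term decomposition $R_2^n=R_6^n+R_7^n+R_8^n$. The paper first \emph{undoes} the shift by replacing $U_i^n=\mathcal{I}^{2,n}_{\theta-\tilde\theta_n}((I_i^{1,n})_\theta)_{\tilde\theta_n-\theta}$ with the unshifted cover $\mathcal{I}^{2,n}_{\theta-\tilde\theta_n}((I_i^{1,n})_\theta)^{+}$; the resulting error is the stochastic term $R_6^n$, controlled in $L^1$ via H\"older's inequality and (C3). The payoff is that $R_8^n$ enjoys the exact inclusion $(I_i^{1,n})_\theta\subset\mathcal{I}^{2,n}_{\theta-\tilde\theta_n}((I_i^{1,n})_\theta)^{+}$, so the lower bound $|R_8^n|\gtrsim\sum_{i\in A_n}|I_i^{1,n}|^{H_1+H_2}$ drops out immediately for \emph{all} $i$, without any short–interval exception. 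Your route instead keeps the shift inside the deterministic term, uses the inclusion $(I_i^{1,n})_{\tilde\theta_n}\subset U_i^n$, and then handles the resulting offset $\tau_n$ by the compactness bound $\int_0^h\!\!\int_0^h|s-t+\tau_n|^{H_1+H_2-2}\,ds\,dt\ge c\,h^{H_1+H_2}$ valid only for $h\ge\rho_n$, followed by the Jensen/(C3) correction for the short intervals. Both arguments are correct and both spend (C3) at exactly one place; yours avoids the extra stochastic term $R_6^n$ at the price of a more delicate deterministic bookkeeping.

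There is, however, a gap in step~(ii). The conditional second moment again splits as $R_5^n=R_{5,1}^n+R_{5,2}^n$, and your proposed toolbox---the pointwise Cauchy--Schwarz bound $|\langle h^l(I),h^l(J)\rangle|\le(|I||J|)^{H_l}$, the length bound $|U_i^n|\le h_i+2r_n$, and (B3)---handles $R_{5,1}^n\lesssim r_n^{2H_2}$ but does \emph{not} control the cross term $R_{5,2}^n=\sum_{i,k}\langle h^1((I_i^{1,n})_\theta),h^2(U_k^n)\rangle\langle h^1((I_k^{1,n})_\theta),h^2(U_i^n)\rangle$. Applying those bounds term by term leaves a factor of $\#\mathcal{I}^{1,n}$ (or its square) that (B3) by itself cannot absorb, because (B3) gives no control on the number of intervals. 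This is precisely why the paper invokes (B4) for the analogous term $R_{7,2}^n$: it bounds one factor by $r_n^{H_1+H_2}$, sums the other to get $r_n^{H_1+2H_2}(\#\mathcal{I}^{1,n})$, and then uses $v_n^{2-(H_1\vee H_2)}\mathbb{E}\{\#\mathcal{I}^{1,n}\}\to 0$. You should either add (B4) to your step~(ii), or alternatively observe the tensor Cauchy--Schwarz inequality
\[
|R_{5,2}^n|=\Bigl|\Bigl\langle\textstyle\sum_i f_i\otimes g_i,\ \sum_k g_k\otimes f_k\Bigr\rangle\Bigr|\le\Bigl\|\textstyle\sum_i f_i\otimes g_i\Bigr\|^{2}=R_{5,1}^n
\]
with $f_i=h^1((I_i^{1,n})_\theta)$ and $g_i=h^2(U_i^n)$, which immediately gives $R_5^n\le 2R_{5,1}^n\lesssim r_n^{2H_2}$ and indeed makes (B3) sufficient---but this is not the argument you sketched.
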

\begin{proof}
We formally extend $B^{1}$ and $B^{2}$ by setting $ B^{1}_{t} = B^{2}_{t} =0 $ for $t<0$. Let us first suppose that $\tilde{\theta}_{n} \in \Theta_{\geq0} $. This is the case for sufficiently large $n$ if $\theta\in(0,\delta)$. Then we decompose $R_{2}^{n}(\tilde{\theta}_{n})$ into three terms:
\begin{align} \label{calc6} 
\nonumber R_{2}^{n}(\tilde{\theta}_{n}) &= \sum_{i:\overline{I_{i}^{1,n}}\leq T} B^{1}( (I_{i}^{1,n})_{\theta} )\left( B^{2}( \mathcal{I}^{2,n}_{\theta-\tilde{\theta}_{n}} ( (I_{i}^{1,n})_{\theta} )_{\tilde{\theta}_{n}-\theta} )- B^{2}( \mathcal{I}^{2,n}_{\theta-\tilde{\theta}_{n}} ( (I_{i}^{1,n})_{\theta} )\cap[0,T+2\delta]) \right) \\
\nonumber &\quad+ \sum_{i:\overline{I_{i}^{1,n}}\leq T} \Bigl(B^{1}( (I_{i}^{1,n})_{\theta} ) B^{2}( \mathcal{I}^{2,n}_{\theta-\tilde{\theta}_{n}} ( (I_{i}^{1,n})_{\theta} ) \cap[0,T+2\delta] ) 
\\ \nonumber &\quad- \langle h^{1}( (I_{i}^{1,n})_{\theta} ) , h^{2}( \mathcal{I}^{2,n}_{\theta-\tilde{\theta}_{n}}((I_{i}^{1,n})_{\theta})\cap [0,T+2\delta] ) \rangle_{L^{2}([0,T+2\delta];\mathbb{R}^{2})} \Bigr) \\
\nonumber &\quad + \sum_{i:\overline{I_{i}^{1,n}}\leq T}\langle h^{1}( (I_{i}^{1,n})_{\theta} ) , h^{2}( \mathcal{I}^{2,n}_{\theta-\tilde{\theta}_{n}}((I_{i}^{1,n})_{\theta})\cap[0,T+2\delta] ) \rangle_{L^{2}([0,T+2\delta];\mathbb{R}^{2})} \\
&=: R_{6}^{n}(\tilde{\theta}_{n}) + R_{7}^{n}(\tilde{\theta}_{n}) + R_{8}^{n}(\tilde{\theta}_{n}).
\end{align}
To simplify the notation, we set $ \mathcal{I}^{2,n}_{\theta-\tilde{\theta}_{n}}((I_{i}^{1,n})_{\theta})^{+} = \mathcal{I}^{2,n}_{\theta-\tilde{\theta}_{n}}((I_{i}^{1,n})_{\theta})\cap[0,T+2\delta] $. Let us show
\begin{align} \label{limit22}
\left| \frac{R_{6}^{n}(\tilde{\theta}_{n})}{D^{n}(\tilde{\theta}_{n})} \right| \to^{p} 0,
\end{align}
\begin{align} \label{limit23}
\left| \frac{R_{7}^{n}(\tilde{\theta}_{n})}{D^{n}(\tilde{\theta}_{n})} \right| \to^{p} 0
\end{align}
and that there exists $ c_{\ast}>0 $ such that 
\begin{align}\label{limit24}
\mathbb{P} \left\{ \left| \frac{R_{8}^{n}(\tilde{\theta}_{n})}{D^{n}(\tilde{\theta}_{n})} \right| \geq c_{\ast}^{\prime} \right\} \to 1
\end{align}
as $n\to\infty$.

\textbf{The limit (\ref{limit22})}.
Thanks to (B3), it suffices to show 
\begin{align}\label{limit5}
\left| \frac{R_{6}^{n}(\tilde{\theta}_{n})}{D^{n}(\tilde{\theta}_{n})} \right| \mathbf{1}_{A_{n}(K)} \to^{p} 0
\end{align}
as $n\to\infty$. By the conditioning argument as in the proof of Lemma \ref{lemma1}, we have
\begin{align}\label{eq1}
\nonumber \mathbb{E}\left\{  \frac{|R_{6}^{n}(\tilde{\theta_{n}})|\mathbf{1}_{A_{n}(K)}}{|D^{n}(\tilde{\theta}_{n})|} \right\} &\leq \mathbb{E}\Biggl\{ \frac{\mathbf{1}_{A_{n}(K)} }{ | D^{n} | } \sum_{i} \mathbb{E}\{  | B^{1}((I_{i}^{1,n})_{\theta})| \\
&\quad \times | B^{2}(\mathcal{I}^{2,n}_{\theta-\tilde{\theta}_{n}}((I_{i}^{1,n})_{\theta})_{\tilde{\theta}_{n}-\theta}) - B^{2}(\mathcal{I}^{2,n}_{\theta-\tilde{\theta}_{n}}((I_{i}^{1,n})_{\theta})^{+}) | \mid \sigma(\mathcal{T}) \} \Biggr\}.
\end{align}
Let $p>1$ and $q>1$ be conjugate indices: $1/p + 1/q = 1$. Using (conditional) H\"{o}lder's inequality and Theorem \ref{theorem3}, we have
\begin{align}\label{ineq17}
\nonumber &\sum_{i} \mathbb{E}\{  | B^{1}((I_{i}^{1,n})_{\theta})| | B^{2}(\mathcal{I}^{2,n}_{\theta-\tilde{\theta}_{n}}((I_{i}^{1,n})_{\theta})_{\tilde{\theta}_{n}-\theta}) - B^{2}(\mathcal{I}^{2,n}_{\theta-\tilde{\theta}_{n}}((I_{i}^{1,n})_{\theta})^{+}) | \mid \sigma(\mathcal{T}) \} \\
\nonumber &\leq \biggl( \sum_{i}\mathbb{E}\{|B((I_{i}^{1,n})_{\theta})|^{p}  \mid \sigma(\mathcal{T}) \}\biggr)^{1/p}\biggl( \sum_{i}\mathbb{E}\bigl\{|B^{2}(\mathcal{I}^{2,n}_{\theta-\tilde{\theta}_{n}}((I_{i}^{1,n})_{\theta})_{\tilde{\theta}_{n}-\theta}) - B^{2}(\mathcal{I}^{2,n}_{\theta-\tilde{\theta}_{n}}((I_{i}^{1,n})_{\theta})^{+})|^{q}  \mid \sigma(\mathcal{T}) \bigr\}\biggr)^{1/q} \\
&\lesssim r_{n}^{H_{1}-1/p} ((\#\mathcal{I}^{1,n})|\theta-\tilde{\theta}_{n}|^{qH_{2}})^{1/q}.
\end{align}
Plugging (\ref{ineq17}) into (\ref{eq1}) with $p=\bigl(1-\frac{H_{2}}{1-\varsigma}\bigr)^{-1}$ and $q=\bigl( \frac{H_{2}}{1-\varsigma} \bigr)^{-1}$, we obtain 
\begin{align}\label{ineq18}
\mathbb{E}\left\{  \frac{|R_{6}^{n}(\tilde{\theta_{n}})|\mathbf{1}_{A_{n}(K)}}{|D^{n}(\tilde{\theta}_{n})|} \right\} \leq \mathbb{E}\Biggl\{ \frac{r_{n}^{H_{1}+H_{2}-1+\mu + \frac{H_{2}\varsigma}{1-\varsigma} - \mu}\bigl( (\#\mathcal{I}^{1,n})|\theta-\tilde{\theta}_{n}|^{1-\varsigma} \bigr)^{\frac{H_{2}}{1-\varsigma}} }{|D^{n}(\tilde{\theta}_{n})|}\mathbf{1}_{A_{n}(K)}\Biggr\}.
\end{align}
Now we can choose sufficiently small $\mu >0$ satisfying $\frac{H_{2}\varsigma}{1-\varsigma} - \mu >0$. Applying (B3), (C3) and Jensen's inequality to (\ref{ineq18}), we complete the proof of (\ref{limit22}).

\textbf{The limit (\ref{limit23})}.
This can be proved in the same way as (\ref{limit21}). Note that in this case the term corresponding to $R_{5,2}^{n}(\tilde{\theta})$ is
\begin{align*}
\nonumber R_{7,2}^{n}(\tilde{\theta}_{n}) &:=\sum_{i,k:\overline{I_{i}^{1,n}}\leq T,\ \overline{I_{k}^{2,n}}\leq T} \langle h^{1}( (I_{i}^{1,n})_{\theta} ),  h^{2}( \mathcal{I}^{2,n}_{\theta-\tilde{\theta}_{n}}((I_{k}^{1,n})_{\theta})^{+} )  \rangle_{L^{2}([0,T+2\delta] ; \mathbb{R}^{2})} \\
\nonumber &\quad\times \langle h^{1}( (I_{k}^{1,n})_{\theta} ),  h^{2}( \mathcal{I}^{2,n}_{\theta-\tilde{\theta}_{n}}((I_{i}^{1,n})_{\theta})^{+}) \rangle_{L^{2}([0,T+2\delta] ; \mathbb{R}^{2})}.
\end{align*}
This is estimated as follows: 
\begin{align*}
\nonumber|R_{7,2}^{n}(\tilde{\theta}_{n})| &\lesssim r_{n}^{H_{1}+H_{2}} \sum_{i,k:\overline{I_{i}^{1,n}}\leq T,\ \overline{I_{k}^{2,n}}\leq T} \langle h^{1}( (I_{i}^{1,n})_{\theta} ),  h^{2}( \mathcal{I}^{2,n}_{\theta-\tilde{\theta}_{n}}((I_{k}^{1,n})_{\theta})^{+})  \rangle_{L^{2}([0,T+2\delta] ; \mathbb{R}^{2})} \\ \nonumber
&\lesssim r_{n}^{H_{1}+2H_{2}} (\#\mathcal{I}^{1,n}) \\
&= r_{n}^{2H_{1}-1+\mu} r_{n}^{2H_{2}-1+\mu} r_{n}^{2-H_{1}-2\mu}(\#\mathcal{I}^{1,n}).
\end{align*} 
This, combined with (B4), completes the proof of (\ref{limit23}). 

\textbf{The limit (\ref{limit24})}.
Finally we analyze the term $R_{8}^{n}(\tilde{\theta}_{n})$. Recall that
\begin{align*}
\nonumber |R_{8}^{n}(\tilde{\theta}_{n})| &= |\rho| c_{H_{1}}c_{H_{2}} \sum_{i:\overline{I_{i}^{n}}\leq T} \int_{ (I_{i}^{1,n})_{\theta} }du\int_{\mathcal{I}^{2,n}_{\theta-\tilde{\theta}_{n}}((I_{i}^{1,n})_{\theta})^{+}}dv\ \beta(u,v) \\&\quad \times |u-v|^{H_{1}+H_{2}-2} u^{H_{1}-H_{2}} v^{H_{2}-H_{1}}.
\end{align*}
Since $ (I_{i}^{1,n})_{\theta} \subset \mathcal{I}^{2,n}_{\theta-\tilde{\theta}_{n}}((I_{i}^{1,n})_{\theta})^{+} $ for all $i$ with $ \overline{I_{i}^{1,n}}\leq T$, it holds that 
\begin{align}\label{ineq9}
\nonumber |R_{8}^{n}(\tilde{\theta}_{n})| &\geq |\rho| c_{H_{1}}c_{H_{2}} \sum_{i:\overline{I_{i}^{1,n}}\leq T} \int_{ (I_{i}^{1,n})_{\theta} }du\int_{(I_{i}^{1,n})_{\theta}}dv\ \beta(u,v) \\&\quad \times |u-v|^{H_{1}+H_{2}-2} u^{H_{1}-H_{2}} v^{H_{2}-H_{1}}.
\end{align}
To obtain a lower bound for (\ref{ineq9}), we note the following fact: for any $\epsilon>0$, it holds that
\begin{align}\label{ineq10}
\nonumber &\quad\sum_{i:\overline{I_{i}^{1,n}}\leq T}\int_{ (I_{i}^{1,n})_{\theta} }du\int_{(I_{i}^{1,n})_{\theta}}dv\ \beta(u,v) |u-v|^{H_{1}+H_{2}-2} u^{H_{1}-H_{2}} v^{H_{2}-H_{1}} \\
\nonumber &\geq \sum_{i:\overline{I_{i}^{1,n}} \leq T, \underline{I_{i}^{1,n}} \geq \epsilon}\int_{ (I_{i}^{1,n})_{\theta} }du\int_{(I_{i}^{1,n})_{\theta}}dv\ \beta(u,v) |u-v|^{H_{1}+H_{2}-2} u^{H_{1}-H_{2}} v^{H_{2}-H_{1}}\\
&\gtrsim  \sum_{i:\overline{I_{i}^{1,n}} \leq T, \underline{I_{i}^{1,n}} \geq \epsilon}\int_{ (I_{i}^{1,n})_{\theta} }du\int_{(I_{i}^{1,n})_{\theta}}dv\ |u-v|^{H_{1}+H_{2}-2}.
\end{align}


Combining (\ref{ineq9}) nad (\ref{ineq10}), we obtain 
\begin{align}\label{ineq12} 
|R_{8}^{n}(\tilde{\theta}_{n})| &\gtrsim \sum_{i:\overline{I_{i}^{1,n}} \leq T, \underline{I_{i}^{1,n}} \geq \epsilon} | I_{i}^{1,n} |^{H_{1}+H_{2}}
\end{align}
for any $\epsilon>0$. Therefore we obtain 
\begin{align*}
\left| \frac{R_{8}^{n}(\tilde{\theta}_{n})}{D^{n}(\tilde{\theta}_{n})} \right| \gtrsim \frac{ \sum_{i:\overline{I_{i}^{1,n}} \leq T, \underline{I_{i}^{1,n}} \geq \epsilon} | I_{i}^{1,n} |^{H_{1}+H_{2}} }{ \sqrt{\sum_{i} |I_{i}^{1,n}|^{2H_{1}} } \sqrt{ \sum_{j} |I_{j}^{2,n}|^{2H_{2}} } }.
\end{align*}
We complete the proof of (\ref{limit24}) by (B2) (especially (\ref{limit7})).

As we noted above, if the true parameter value $\theta$ is positive, then $ \tilde{\theta}_{n} $ is in $ \Theta_{\geq0} $ for sufficiently large $n$ since $ |\tilde{\theta}_{n} - \theta|\leq \rho_{n} $. Therefore we can conclude (\ref{limit6}) by (\ref{limit22})-(\ref{limit24}) if $ \theta >0 $. However $\tilde{\theta}_{n}$ may be negative for any $n$ when $ \theta=0 $. In order to obtain (\ref{limit6}) when $\theta=0$, it suffices to show (\ref{limit22})-(\ref{limit24}) hold for $\theta=0$ and $ \tilde{\theta}_{n} \in [-\rho_{n},0) $. This is completely analogous to the case where $ \tilde{\theta}_{n} \in \Theta_{\geq0} $, so that we only add a few remarks and omit the proof. 

If  $\theta=0$ and $ \tilde{\theta}_{n} \in [-\rho_{n},0) $, then (\ref{calc6}) becomes
\begin{align} \label{calc7}
\nonumber R_{2}^{n}(\tilde{\theta}_{n}) &= \sum_{i,j:\overline{I_{j}^{2,n}} \leq T } B^{1}(I_{i}^{1,n}) B^{2}(I_{j}^{2,n} ) \mathbf{1}_{ (I_{i}^{1,n})_{\tilde{\theta}_{n}} \cap I_{j}^{2,n} \neq \emptyset } \\
\nonumber &= \sum_{j:\overline{I_{j}^{2,n}} \leq T } \left( B^{1}(\mathcal{I}_{\tilde{\theta}_{n}}^{1,n}(I_{j}^{2,n})_{-\tilde{\theta}_{n}}) - B^{1}(\mathcal{I}_{\tilde{\theta}_{n}}^{1,n}(I_{j}^{2,n})^{+} ) \right)B^{2}(I_{j}^{2,n}) \\
\nonumber &\quad +\sum_{j:\overline{I_{j}^{2,n}} \leq T } \left(B^{1}(\mathcal{I}_{\tilde{\theta}_{n}}^{1,n}(I_{j}^{2,n})^{+}) B^{2}(I_{j}^{2,n}) - \langle h^{1}( \mathcal{I}_{\tilde{\theta}_{n}}^{1,n}(I_{j}^{2,n})^{+} ) , h^{2}( I_{j}^{2,n} ) \rangle_{L^{2}([0,T+2\delta];\mathbb{R}^{2})} \right) \\
&\quad + \sum_{j:\overline{I_{j}^{2,n}} \leq T }  \langle h^{1}( \mathcal{I}_{\tilde{\theta}_{n}}^{1,n}(I_{j}^{2,n})^{+} ) , h^{2}( I_{j}^{2,n} ) \rangle_{L^{2}([0,T+2\delta];\mathbb{R}^{2})}.
\end{align} 
Here the symbol $  \mathcal{I}_{\tilde{\theta}_{n}}^{1,n}(I_{j}^{2,n})^{+} $ denotes $ \mathcal{I}_{\tilde{\theta}_{n}}^{1,n}(I_{j}^{2,n}) \cap [0,T+2\delta]  $ as before. 
Let $ \hat{R}_{8}^{n}(\tilde{\theta}_{n}) $ denote the last term in (\ref{calc7}). Then the inequality corresponding to (\ref{ineq12}) is 
\begin{align*}
|\hat{R}_{8}^{n}(\tilde{\theta}_{n})| &\gtrsim \sum_{j:\overline{I_{j}^{2,n}} \leq T, \underline{I_{j}^{2,n}} \geq \epsilon} | I_{i}^{2,n} |^{H_{1} + H_{2}}.
\end{align*}
In particular, we use (\ref{limit8}) of (B2) in this case. 
\end{proof}

The following corollary is immediate from Propositions \ref{lemma2} and \ref{lemma5} and Lemma \ref{lemma1}.

\begin{corollary}\label{corollary1}
\begin{asparaenum}
\item[(1)] We have 
\begin{align*}
\sup_{\tilde{\theta}\in\mathcal{G}^{n} \cap \{ \tilde{\theta} \in \Theta \mid | \tilde{\theta} - \theta | \geq \epsilon v_{n} \} }|\mathcal{U}_{n}(\tilde{\theta})| \to^{p} 0
\end{align*}
as $n\to\infty$ for any $ \epsilon >0 $.
\item[(2)] Suppose that $ \tilde{\theta}_{n} \in \mathcal{G}^{n} $ satisfies $ |\theta-\tilde{\theta}_{n}| \leq \rho_{n} $ for all $n\in\mathbb{N}$. Then there exists $ c_{\ast} >0 $ such that 
\begin{align*}
\mathbb{P} \left\{ | \mathcal{U}_{n}(\tilde{\theta}_{n}) | \geq c_{\ast} \right\} \to 1
\end{align*}
as $n\to\infty$.
\end{asparaenum}
\end{corollary}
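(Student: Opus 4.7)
The plan is to factor $\mathcal{U}_n(\tilde\theta)$ as the product of the ratio $R_2^n(\tilde\theta)/D^n(\tilde\theta)$, whose asymptotic behaviour is supplied by Propositions \ref{lemma2} and \ref{lemma5}, and a normalization correction that converges to $1$ in probability by Lemma \ref{lemma1}. Under the running assumption $A^1=A^2=0$ I substitute $X^1(I^{1,n}_i)=\sigma_1 B^1((I^{1,n}_i)_\theta)$ and $X^2(I^{2,n}_j)=\sigma_2 B^2(I^{2,n}_j)$ into the definition (\ref{def5}); the constants $\sigma_1,\sigma_2$ cancel between numerator and denominator, giving $\mathcal{U}_n(\tilde\theta) = R_2^n(\tilde\theta)/N^n(\tilde\theta)$, where $N^n(\tilde\theta)$ denotes the product of the two square-root Gaussian-norm factors in the denominator of $\mathcal{U}_n$.

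I would then apply Lemma \ref{lemma1} to each factor of $N^n(\tilde\theta)$. When $\tilde\theta\ge 0$, the lemma with $J=[0,T]$, $a=\theta$, $l=1$ controls $\sum_{i\colon \overline{I^{1,n}_i}\le T} B^1((I^{1,n}_i)_\theta)^2$, while the lemma with $J=[0,T+\delta]$, $a=0$, $l=2$ controls $\sum_{j} B^2(I^{2,n}_j)^2$. When $\tilde\theta<0$ the roles of $J$ swap but the shifts $a$ do not, so Lemma \ref{lemma1} again applies (recall that $\theta\in[0,\delta)$ was assumed without loss of generality at the start of Section \ref{sec6}). Since the random ratio $D^n(\tilde\theta)/N^n(\tilde\theta)$ then takes only two possible values in $\tilde\theta$, determined by $\mathrm{sgn}(\tilde\theta)$, both tending to $1$ in probability, I conclude
\[
\sup_{\tilde\theta\in\Theta}\left|\frac{D^n(\tilde\theta)}{N^n(\tilde\theta)} - 1\right| \to^p 0.
\]

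Part (1) then follows from the estimate
\[
\sup_{\tilde\theta\in\mathcal G^n\cap\{|\tilde\theta-\theta|\ge \epsilon v_n\}} |\mathcal{U}_n(\tilde\theta)| \;\le\; \left(\sup_{\tilde\theta\in\mathcal G^n\cap\{|\tilde\theta-\theta|\ge \epsilon v_n\}}\left|\frac{R_2^n(\tilde\theta)}{D^n(\tilde\theta)}\right|\right) \cdot \left(\sup_{\tilde\theta\in\Theta}\frac{D^n(\tilde\theta)}{N^n(\tilde\theta)}\right),
\]
in which the first factor is $o_p(1)$ by Proposition \ref{lemma2} and the second is $O_p(1)$ by the previous step. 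Part (2) follows by writing $|\mathcal{U}_n(\tilde\theta_n)| = |R_2^n(\tilde\theta_n)/D^n(\tilde\theta_n)|\cdot D^n(\tilde\theta_n)/N^n(\tilde\theta_n)$ and combining the lower bound $|R_2^n(\tilde\theta_n)/D^n(\tilde\theta_n)|\ge c'_\ast$ of Proposition \ref{lemma5} (with probability tending to $1$) with the fact that $D^n(\tilde\theta_n)/N^n(\tilde\theta_n)$ is within, say, $1/2$ of $1$ with probability tending to $1$; any $c_\ast<c'_\ast$ (e.g.\ $c_\ast=c'_\ast/2$) will serve as the advertised constant.

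The only step requiring a bit of care is verifying that Lemma \ref{lemma1} applies with the correct shift $a$ at each denominator factor: the shift on the $B^1$ side is always the fixed $\theta\in[0,\delta)$, and on the $B^2$ side it is $0$, never the running parameter $\tilde\theta$. The absence of a continuous dependence of $N^n/D^n$ on $\tilde\theta$ is precisely what makes the uniformity over $\tilde\theta$ in Part (1) essentially free, since only two Lemma \ref{lemma1} applications are ever needed.
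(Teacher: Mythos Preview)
Your proof is correct and follows precisely the route the paper intends: the paper states only that the corollary is ``immediate from Propositions \ref{lemma2} and \ref{lemma5} and Lemma \ref{lemma1}'', and you have supplied exactly the factorisation $|\mathcal{U}_n(\tilde\theta)| = |R_2^n(\tilde\theta)/D^n(\tilde\theta)|\cdot D^n(\tilde\theta)/N^n(\tilde\theta)$ and the observation that the correction factor depends only on $\mathrm{sgn}(\tilde\theta)$, which is what makes the combination work. Your care about which shift $a$ and which window $J$ is used in each application of Lemma \ref{lemma1} is well placed and matches the paper's setup in (\ref{calc8}).
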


Now we are ready to prove Theorem \ref{theorem1} under the additional assumptions $ A^{1} = A^{2} = 0 $. 

\begin{proof}[Proof of Theorem \ref{theorem1} when $A^{1}=A^{2}=0$]
Thanks to (C3), there is a sequence $ (\tilde{\theta}_{n})_{n\in\mathbb{N}} \subset \mathcal{G}^{n} $ satisfying $ |\tilde{\theta}_{n} - \theta| \leq \rho_{n} $. By the definition of $ \hat{\theta}_{n} $, we have $ |\mathcal{U}_{n}(\tilde{\theta}_{n})| \leq |\mathcal{U}_{n}(\hat{\theta}_{n})| $. On the other hand, one obtains $ |\mathcal{U}_{n}(\hat{\theta}_{n})| \leq \sup_{\tilde{\theta}\in\mathcal{G}^{n} \cap \{ \tilde{\theta}\in \Theta \mid | \tilde{\theta} - \theta | \geq \epsilon v_{n} \} }|\mathcal{U}_{n}(\tilde{\theta})| $ on the set $ \{ v_{n}^{-1}| \hat{\theta_{n}} - \theta | \geq \epsilon \} $. Therefore, we obtain
\begin{align}\label{ineq11}
\mathbb{P}\{ v_{n}^{-1}| \hat{\theta_{n}} - \theta | \geq \epsilon \} \leq \mathbb{P}\left\{ | \mathcal{U}_{n}(\tilde{\theta}_{n}) | \leq \sup_{\tilde{\theta}\in\mathcal{G}^{n} \cap \{ \tilde{\theta}\in \Theta \mid | \tilde{\theta} - \theta | \geq \epsilon v_{n} \} }|\mathcal{U}_{n}(\tilde{\theta})| \right\}.
\end{align}
The right-hand-side of (\ref{ineq11}) tends to $0$ as $n\to\infty$ by Corollary \ref{corollary1}.
\end{proof}

\subsection{Proof of Theorem \ref{theorem1}}

Let $ \mathcal{U}_{n}^{0}(\tilde{\theta}) $ denote the right-hand-side of (\ref{calc8}). Now we consider the general case: we no longer assume $A^{1}=A^{2}=0$. We reduce the general case to the previous one.  We establish a relation between $\mathcal{U}_{n}(\tilde{\theta})$ and $\mathcal{U}_{n}^{0}(\tilde{\theta})$ in Proposition \ref{corollary2} below. We begin with a lemma that relates the denominators of $\mathcal{U}_{n}(\tilde{\theta})$ and those of $\mathcal{U}_{n}^{0}(\tilde{\theta})$.

\begin{lemma}\label{lemma9}
Let $ J $ denote $ [0,T+\delta] $ or $[0,T]$. We set $a_{1}=\theta$ and $a_{2}=0$. Then we have
\begin{align} \label{limit9}
\frac{ \sum_{i:\overline{I_{i}^{n}} \in J } ( X^{l}(I_{i}^{l,n})^{2} - \sigma_{l}^{2} B^{l}((I_{i}^{l,n})_{a_{l}})^{2} ) }{ \sum_{i:\overline{I_{i}^{n}} \in J } | I_{i}^{l,n} |^{2H_{l}} } \to^{p} 0
\end{align}
as $n\to\infty$ for $l=1,2$.
\end{lemma}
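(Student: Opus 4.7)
The plan is to expand
\[
X^{l}(I_{i}^{l,n})^{2}-\sigma_{l}^{2}B^{l}((I_{i}^{l,n})_{a_{l}})^{2}
= A^{l}(I_{i}^{l,n})^{2}+2\sigma_{l}A^{l}(I_{i}^{l,n})B^{l}((I_{i}^{l,n})_{a_{l}}),
\]
and control each of the two resulting sums separately in the ratio \eqref{limit9}. By (A3), the process $A^{l}$ is Lipschitz with some random (but $\mathbb{P}$-a.s.\ finite) constant $L_{l}$, so $|A^{l}(I_{i}^{l,n})|\le L_{l}|I_{i}^{l,n}|\le L_{l}r_{n}$. This immediately yields the two bounds
\[
\sum_{i:\overline{I_{i}^{l,n}}\in J} A^{l}(I_{i}^{l,n})^{2}
\le L_{l}^{2}\, r_{n}\!\!\sum_{i:\overline{I_{i}^{l,n}}\in J}|I_{i}^{l,n}|
\le L_{l}^{2}(T+\delta)\, r_{n},
\]
and, via Cauchy--Schwarz,
\[
\Biggl|\sum_{i:\overline{I_{i}^{l,n}}\in J} A^{l}(I_{i}^{l,n})B^{l}((I_{i}^{l,n})_{a_{l}})\Biggr|
\le L_{l}\sqrt{(T+\delta)r_{n}}\sqrt{\sum_{i:\overline{I_{i}^{l,n}}\in J}B^{l}((I_{i}^{l,n})_{a_{l}})^{2}}.
\]

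Dividing by $\sum_{i:\overline{I_{i}^{l,n}}\in J}|I_{i}^{l,n}|^{2H_{l}}$ and using Lemma \ref{lemma1} to replace $\sum B^{l}(\cdot)^{2}$ by $\sum|I_{i}^{l,n}|^{2H_{l}}$ up to a factor converging to $1$ in probability, the absolute value of the ratio in \eqref{limit9} is bounded (in probability, up to a multiplicative constant involving $L_{l}$ and $\sigma_{l}$) by
\[
\frac{r_{n}}{\sum_{i:\overline{I_{i}^{l,n}}\in J}|I_{i}^{l,n}|^{2H_{l}}}
+\sqrt{\frac{r_{n}}{\sum_{i:\overline{I_{i}^{l,n}}\in J}|I_{i}^{l,n}|^{2H_{l}}}}.
\]
Since in either case $J=[0,T]$ or $J=[0,T+\delta]$ the denominator dominates $\sum_{i:\overline{I_{i}^{l,n}}\le T}|I_{i}^{l,n}|^{2H_{l}}$, it suffices to show $r_{n}/\sum_{i:\overline{I_{i}^{l,n}}\le T}|I_{i}^{l,n}|^{2H_{l}}=o_{p}(1)$. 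Fix any $\mu\in(0,2(1-H_{l}))$; condition (B3) gives
\[
\frac{r_{n}}{\sum_{i:\overline{I_{i}^{l,n}}\le T}|I_{i}^{l,n}|^{2H_{l}}}
= r_{n}^{2(1-H_{l})-\mu}\cdot
\frac{r_{n}^{2H_{l}-1+\mu}}{\sum_{i:\overline{I_{i}^{l,n}}\le T}|I_{i}^{l,n}|^{2H_{l}}}
= O_{p}\bigl(r_{n}^{2(1-H_{l})-\mu}v_{n}^{\gamma(\mu)}\bigr)=o_{p}(1),
\]
since both $r_{n}$ and $v_{n}$ tend to $0$ and both exponents $2(1-H_{l})-\mu$ and $\gamma(\mu)$ are strictly positive.

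The only minor nuisance is that the Lipschitz constant $L_{l}$ is random; this is dispatched by the standard device of restricting to $\{L_{l}\le M\}$, where the constants above become deterministic, and then letting $M\to\infty$ so that $\mathbb{P}\{L_{l}>M\}\to 0$. I do not anticipate any serious obstacle: the heart of the argument is an elementary combination of Lipschitz continuity of the drift, Cauchy--Schwarz, Lemma \ref{lemma1}, and the quantitative lower bound on $\sum|I_{i}^{l,n}|^{2H_{l}}$ provided by (B3); the only decision point is the choice of $\mu$, which must be small enough that $2(1-H_{l})-\mu>0$ and for which $\gamma(\mu)$ exists via (B3).
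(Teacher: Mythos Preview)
Your proof is correct and uses the same decomposition as the paper,
\[
X^{l}(I_{i}^{l,n})^{2}-\sigma_{l}^{2}B^{l}((I_{i}^{l,n})_{a_{l}})^{2}
= A^{l}(I_{i}^{l,n})^{2}+2\sigma_{l}A^{l}(I_{i}^{l,n})B^{l}((I_{i}^{l,n})_{a_{l}}),
\]
but differs in how the cross term is controlled. The paper bounds each summand pathwise via the $(H_{l}-\epsilon)$-H\"older continuity of $B^{l}$, obtaining $|A^{l}(I_{i}^{l,n})B^{l}((I_{i}^{l,n})_{a_{l}})|\lesssim |I_{i}^{l,n}|^{1+H_{l}-\epsilon}\le r_{n}^{H_{l}-\epsilon}|I_{i}^{l,n}|$, and then applies (B3) with $\mu=1-H_{l}-\epsilon$. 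You instead apply Cauchy--Schwarz to the whole sum and invoke Lemma~\ref{lemma1} to convert $\sum_{i}B^{l}((I_{i}^{l,n})_{a_{l}})^{2}$ back into $\sum_{i}|I_{i}^{l,n}|^{2H_{l}}$, which reduces everything to the single quantity $r_{n}/\sum_{i}|I_{i}^{l,n}|^{2H_{l}}$. Your route has the mild advantage that it introduces only one random constant (the Lipschitz constant $L_{l}$, which you dispatch by truncation) rather than two (the paper's $\lesssim$ also hides the random H\"older constant of $B^{l}$), and it recycles Lemma~\ref{lemma1} rather than appealing to a separate pathwise property of fBM. The paper's route is slightly more self-contained. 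Both are short and fully rigorous; the final appeal to (B3) is the same.
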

\begin{proof}
By the H\"{o}lder continuity of $ B^{l} $, we have, for any $ \epsilon >0 $, 
\begin{align*}
|X^{l}(I_{i}^{l,n})^{2} - \sigma_{l}^{2} B^{l}((I_{i}^{l,n})_{a_{l}})^{2}| &= |A^{l}(I_{i}^{l,n})^{2} + 2\sigma_{l} A^{l}(I_{i}^{l,n}) B^{l}((I_{i}^{l,n})_{a_{l}}) |\\
&\lesssim | I_{i}^{l,n} |^{2H_{l}} + | I_{i}^{l,n} |^{1+H_{l}-\epsilon} \\
&\lesssim r_{n} | I_{i}^{l,n} | + r_{n}^{H_{l}-\epsilon} | I_{i}^{l,n} |.
\end{align*}
Since $ H_{l}-\epsilon = (2H_{l}-1) +1 -H_{l} -\epsilon $, we obtain $ (\ref{limit9}) $ by (B3) if we choose $\epsilon>0$ such that $ 1-H_{l}-\epsilon >0 $.
\end{proof}

We set 
\begin{align*}
\nonumber \check{R}_{2}^{n}(\tilde{\theta}) &= \mathbf{1}_{\Theta_{\geq0}}(\tilde{\theta})\sum_{i,j:\overline{I_{i}^{1,n}}\leq T} X^{1}(I_{i}^{1,n})X^{2}(I_{j}^{2,n})\mathbf{1}_{I_{i}^{1,n}\cap (I_{j}^{2,n})_{-\tilde{\theta}}\neq \emptyset} \\
&\quad + \mathbf{1}_{\Theta_{<0}}(\tilde{\theta})\sum_{i,j:\overline{I_{j}^{2,n}}\leq T} X^{1}(I_{i}^{1,n})X^{2}(I_{j}^{2,n})\mathbf{1}_{(I_{i}^{1,n})_{\tilde{\theta}}\cap (I_{j}^{2,n})\neq \emptyset}.
\end{align*}
The next lemma relates the numerators of $\mathcal{U}_{n}(\tilde{\theta})$ and those of $\mathcal{U}_{n}^{0}(\tilde{\theta})$. 

\begin{lemma}\label{lemma10}
We have 
\begin{align}\label{limit10}
 \sup_{\tilde{\theta} \in \mathcal{G}^{n} } \left| \frac{ \check{R}_{2}^{n}(\tilde{\theta}) - \sigma_{1}\sigma_{2} R_{2}^{n}(\tilde{\theta}) }{D^{n}(\tilde{\theta})} \right| \to^{p} 0
\end{align}
as $n\to\infty$.
\end{lemma}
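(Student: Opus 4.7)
The plan is to expand $X^{l}(I) = \sigma_{l} B^{l}(I_{a_{l}}) + A^{l}(I)$ (with $a_{1} = \theta$, $a_{2} = 0$) inside the definition of $\check{R}_{2}^{n}(\tilde{\theta})$, so that the difference $\check{R}_{2}^{n}(\tilde{\theta}) - \sigma_{1}\sigma_{2} R_{2}^{n}(\tilde{\theta})$ splits into three cross-term sums: one of type $B^{1}\cdot A^{2}$, one of type $A^{1}\cdot B^{2}$, and one of type $A^{1}\cdot A^{2}$. I would show that each of these, divided by $D^{n}(\tilde{\theta})$, vanishes in probability by establishing a bound on the numerator that does \emph{not} depend on $\tilde{\theta}$, and then applying the lower bound on $D^{n}(\tilde{\theta})$ afforded by condition (B3). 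Since the upper bounds are uniform in $\tilde{\theta}$, taking the supremum over $\mathcal{G}^{n}$ is automatic.

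For the pure drift sum, Lipschitz continuity gives $|A^{l}(I_{i}^{l,n})|\leq L|I_{i}^{l,n}|\leq Lr_{n}$, and for each $i$ the overlap constraint yields $\sum_{j}|I_{j}^{2,n}|\mathbf{1}_{I_{i}^{1,n}\cap(I_{j}^{2,n})_{-\tilde{\theta}}\neq\emptyset}\leq|I_{i}^{1,n}|+2r_{n}\lesssim r_{n}$, so that the $A^{1}A^{2}$-part is bounded by $L^{2}r_{n}\sum_{i}|I_{i}^{1,n}|\lesssim r_{n}$, uniformly in $\tilde{\theta}$. For the mixed parts, I would rewrite (e.g.) the $B^{1}A^{2}$-sum, via the standard telescoping used in Propositions \ref{lemma2} and \ref{lemma5}, as $\sigma_{1}\sum_{j}A^{2}(I_{j}^{2,n})\,B^{1}\bigl(\mathcal{I}^{1,n,\leq T}((I_{j}^{2,n})_{-\tilde{\theta}})_{\theta}\bigr)$, then use Lipschitz continuity of $A^{2}$ together with the pathwise $(H_{1}-\epsilon)$-H\"older continuity of $B^{1}$: since each aggregated interval has length at most $3r_{n}$, one gets $|B^{1}(\cdot)|\leq C(\omega)r_{n}^{H_{1}-\epsilon}$ with $C(\omega)<\infty$ almost surely, and summing yields the uniform bound $L|\sigma_{1}|C(\omega)(T+\delta)r_{n}^{H_{1}-\epsilon}$. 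The $A^{1}B^{2}$-part is handled symmetrically and is $O_{p}(r_{n}^{H_{2}-\epsilon})$ uniformly in $\tilde{\theta}$.

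For the denominator, I would invoke condition (B3) exactly as in the proof of Lemma~\ref{lemma1}: on the set $A_{n}(K)$ one has $D^{n}(\tilde{\theta})\gtrsim r_{n}^{H_{1}+H_{2}-1+\mu}v_{n}^{-\gamma(\mu)}$ uniformly in $\tilde{\theta}$, because both factors of $D^{n}(\tilde{\theta})$ admit the corresponding lower bound regardless of which of them carries the restriction $\overline{I_{i}^{l,n}}\leq T$. Combining, each of the three normalized cross-term sums is bounded on $A_{n}(K)$ by $r_{n}^{2-H_{1}-H_{2}-\mu}v_{n}^{\gamma(\mu)}$ (pure drift) or by $C(\omega)r_{n}^{1-H_{l}-\epsilon-\mu}v_{n}^{\gamma(\mu)}$ (mixed). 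Choosing $\mu$ and $\epsilon$ small enough renders all exponents of $r_{n}$ strictly positive (possible since $H_{1},H_{2}<1$), and since $r_{n},v_{n}\to 0$ and $\mathbb{P}(A_{n}(K)^{c})$ can be made arbitrarily small for large $K$, the supremum vanishes in probability, yielding (\ref{limit10}). The case $\tilde{\theta}\in\Theta_{<0}$ is handled identically after interchanging indices $1$ and $2$, using the alternative form of $\check{R}_{2}^{n}(\tilde{\theta})$.

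The main obstacle is simply bookkeeping---ensuring that the various exponents of $r_{n}$ can be made positive simultaneously and that the constants $C(\omega)$ coming from H\"older continuity, being independent of $\tilde{\theta}$, cause no trouble for the supremum. No new analytic technique is required beyond what was already used in Lemmas~\ref{lemma1}, \ref{lemma9} and Propositions~\ref{lemma2}, \ref{lemma5}.
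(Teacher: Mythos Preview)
Your proposal is correct and follows essentially the same route as the paper's proof: expand $X^{1}(I_{i}^{1,n})X^{2}(I_{j}^{2,n})$ into its four pieces, aggregate the overlap condition so that the fBM increment lives on an interval of length $O(r_{n})$, bound the three cross terms by $r_{n}$, $r_{n}^{H_{1}-\epsilon}$, $r_{n}^{H_{2}-\epsilon}$ via Lipschitz continuity of the drifts and H\"older continuity of the fBMs, and then absorb the denominator through (B3) exactly as you describe. The paper is slightly terser (it writes $r_{n}^{H_{l}}$ rather than $r_{n}^{H_{l}-\epsilon}$ and does not spell out the localization on $A_{n}(K)$), but the argument is the same; your version is, if anything, a bit more careful about the H\"older exponent.
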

\begin{proof}
Note that 
\begin{align*}
\nonumber X^{1}(I_{i}^{1,n}) X^{2}(I_{j}^{2,n})  &= A^{1}(I_{i}^{1,n})A^{2}(I_{j}^{2,n}) + \sigma_{2} A^{1}(I_{i}^{1,n}) B^{2}(I_{j}^{2,n}) \\ &\quad  + \sigma_{1} B^{1}((I_{i}^{1,n})_{\theta}) A^{2}(I_{j}^{2,n}) +  \sigma_{1}\sigma_{2} B^{1}((I_{i}^{1,n})_{\theta}) B^{2}(I_{j}^{2,n}).
\end{align*}
Obviously (\ref{limit10}) is equivalent to 
\begin{align}\label{limit11}
\sup_{\tilde{\theta} \in \mathcal{G}^{n} \cap \Theta_{\geq0} } \left| \frac{ \check{R}_{2}^{n}(\tilde{\theta}) - \sigma_{1}\sigma_{2}R_{2}^{n}(\tilde{\theta}) }{D^{n}(\tilde{\theta})} \right| \to^{p} 0
\end{align}
and 
\begin{align}\label{limit12}
\sup_{\tilde{\theta} \in \mathcal{G}^{n} \cap \Theta_{<0} } \left| \frac{ \check{R}_{2}^{n}(\tilde{\theta}) - \sigma_{1}\sigma_{2}R_{2}^{n}(\tilde{\theta}) }{D^{n}(\tilde{\theta})} \right| \to^{p} 0.
\end{align}
We only prove (\ref{limit11}). The proof of (\ref{limit12}) is completely analogous. Since $ \tilde{\theta} \in \Theta_{\geq0} $, we have 
\begin{align*}
\nonumber |\check{R}_{2}^{n}(\tilde{\theta}) -R_{2}^{n}(\tilde{\theta})| &\leq \sum_{i: \overline{I_{i}^{1,n}}\leq T} | A^{1}(I_{i}^{1,n}) A^{2}(\mathcal{I}_{-\tilde{\theta}}^{2,n}(I_{i}^{1,n})_{\tilde{\theta}}) | \\ \nonumber
&\quad + \sum_{i: \overline{I_{i}^{1,n}}\leq T} | \sigma_{2}A^{1}(I_{i}^{1,n}) B^{2}(\mathcal{I}_{-\tilde{\theta}}^{2,n}(I_{i}^{1,n})_{\tilde{\theta}}) | \\
 \nonumber &\quad + \sum_{j} | \sigma_{1} B^{1}( \mathcal{I}_{\theta}^{1,n,\leq T} ((I_{j}^{2,n})_{\theta-\tilde{\theta}}) ) A^{2}(I_{j}^{2,n})  | \\
\nonumber &\lesssim r_{n} + r_{n}^{H_{1}} + r_{2}^{H_{2}}\\
&= r_{n}^{H_{1}+H_{2}-1+\mu} ( r_{n}^{2-H_{1}-H_{2}-\mu} + r_{n}^{1-H_{2}-\mu} + r_{n}^{1-H_{1}-\mu} ).
\end{align*} 
We can choose $\mu>0$ such that $ 1-(H_{1}\vee H_{2}) -\mu >0 $ so that we conclude (\ref{limit11}) by (B3).
\end{proof}

Let us set $S_{n} = \bigcap_{l=1,2}\bigl\{\sum_{i:\overline{I_{i}^{l,n}}\leq T} X^{l}(I_{i}^{l,n})^{2} \neq 0\bigr\}$. Note that $\mathbb{P}\{S_{n}\}\to 1$ as $n\to\infty$ by Lemmas \ref{lemma1} and \ref{lemma9}. The next proposition shows that we can reduce calculation of $ \mathcal{U}_{n}(\tilde{\theta}) $ to that of $ \mathcal{U}_{n}^{0}(\tilde{\theta}) $. 

\begin{proposition}\label{corollary2}
It holds that
\begin{align}\label{calc9}
\mathcal{U}_{n}(\tilde{\theta}) = \left(\bar{o}_{p}(1)(\tilde{\theta}) + \frac{1+o_{p}(1)}{1+o_{p}(1)} \mathcal{U}_{n}^{0}(\tilde{\theta})\right)\mathbf{1}_{S_{n}}.
\end{align} 
Here $ \bar{o}_{p}(1)(\tilde{\theta}) $ denotes a quantity such that
\begin{align*}
\sup_{\tilde{\theta}\in\mathcal{G}^{n}} \bar{o}_{p}(1)(\tilde{\theta}) \to^{p} 0.
\end{align*}
In particular, 
\begin{enumerate}
\item[(1)] we have 
\begin{align*}
\sup_{\tilde{\theta}\in\mathcal{G}^{n}\cap \{ \tilde{\theta}\in \Theta \mid |\tilde{\theta}-\theta|\geq\epsilon v_{n} \}} | \mathcal{U}_{n}(\tilde{\theta}) | \to^{p} 0
\end{align*}
as $n\to\infty$ for any $ \epsilon >0 $, and
\item[(2)] if $ \tilde{\theta}_{n} \in \mathcal{G}^{n} $ satisfies $ |\tilde{\theta}_{n} -\theta| \leq v_{n}^{1+\alpha} $, then there exists $c_{\ast} >0$ such that
\begin{align*}
\mathbb{P} \left\{ |\mathcal{U}_{n}(\tilde{\theta}_{n})| \geq c_{\ast} \right\} \to 1
\end{align*}
as $n\to\infty$.
\end{enumerate}
\end{proposition}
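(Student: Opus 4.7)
The plan is to read off the representation \eqref{calc9} by an algebraic decomposition of the contrast and then deduce (1) and (2) by combining it with Propositions \ref{lemma2} and \ref{lemma5}, which are already available for the Gaussian part. On the event $S_n$ the contrast may be written as $\mathcal{U}_n(\tilde\theta) = \check R_2^n(\tilde\theta)/\mathcal{D}_X^n(\tilde\theta)$, where $\mathcal{D}_X^n(\tilde\theta)$ denotes the denominator in \eqref{def5} (built from the $X^l$'s); analogously let $\mathcal{D}_B^n(\tilde\theta)$ denote the corresponding denominator of $\mathcal{U}_n^0(\tilde\theta)$ (built from the $B^l$'s). I split the numerator as
\[
\check R_2^n(\tilde\theta) \;=\; \bigl(\check R_2^n(\tilde\theta)-\sigma_1\sigma_2 R_2^n(\tilde\theta)\bigr) \;+\; \sigma_1\sigma_2 R_2^n(\tilde\theta),
\]
divide through by $\mathcal{D}_X^n(\tilde\theta)$, and handle the two pieces separately.

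The three ratios I need are $\mathcal{D}_X^n(\tilde\theta)/D^n(\tilde\theta)$, $\mathcal{D}_B^n(\tilde\theta)/D^n(\tilde\theta)$ and $D^n(\tilde\theta)/\mathcal{D}_X^n(\tilde\theta)$. Lemma \ref{lemma1} gives $\sum B^l((I_i^{l,n})_{a_l})^2/\sum|I_i^{l,n}|^{2H_l}\to^p 1$, so $\mathcal{D}_B^n(\tilde\theta)/D^n(\tilde\theta) = 1+o_p(1)$. Combining Lemma \ref{lemma1} with Lemma \ref{lemma9} yields $\sum X^l(I_i^{l,n})^2/\sum|I_i^{l,n}|^{2H_l}\to^p \sigma_l^2$, whence $\mathcal{D}_X^n(\tilde\theta)/D^n(\tilde\theta) = \sigma_1\sigma_2(1+o_p(1))$ and in particular $D^n(\tilde\theta)/\mathcal{D}_X^n(\tilde\theta) = O_p(1)$. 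Since the denominators depend on $\tilde\theta$ only through the sign of $\tilde\theta$, these $o_p(1)$'s are trivially uniform on $\mathcal{G}^n$. Now Lemma \ref{lemma10} tells us $(\check R_2^n - \sigma_1\sigma_2 R_2^n)/D^n$ is $o_p(1)$ uniformly in $\tilde\theta$, so multiplying by the $O_p(1)$ factor $D^n/\mathcal{D}_X^n$ preserves this: the first piece equals $\bar o_p(1)(\tilde\theta)$. For the second piece,
\[
\frac{\sigma_1\sigma_2 R_2^n(\tilde\theta)}{\mathcal{D}_X^n(\tilde\theta)} \;=\; \sigma_1\sigma_2\,\mathcal{U}_n^0(\tilde\theta)\,\frac{\mathcal{D}_B^n(\tilde\theta)}{\mathcal{D}_X^n(\tilde\theta)} \;=\; \frac{1+o_p(1)}{1+o_p(1)}\,\mathcal{U}_n^0(\tilde\theta),
\]
using the two ratio computations above. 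Multiplying by $\mathbf{1}_{S_n}$ (which is automatic when $\mathcal{U}_n$ is defined) proves \eqref{calc9}.

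Parts (1) and (2) then follow mechanically. For (1), Proposition \ref{lemma2} gives $\sup_{\tilde\theta\in\mathcal{G}^n\cap\{|\tilde\theta-\theta|\geq\epsilon v_n\}}|R_2^n(\tilde\theta)/D^n(\tilde\theta)|\to^p 0$; dividing by $\mathcal{D}_B^n/D^n = 1+o_p(1)$ yields the same uniform convergence for $|\mathcal{U}_n^0(\tilde\theta)|$, and \eqref{calc9} transfers it to $|\mathcal{U}_n(\tilde\theta)|$ after absorbing the $\bar o_p(1)(\tilde\theta)$ term. For (2), Proposition \ref{lemma5} supplies $c_\ast'>0$ with $|R_2^n(\tilde\theta_n)/D^n(\tilde\theta_n)|\geq c_\ast'$ w.h.p., which similarly lifts to $|\mathcal{U}_n^0(\tilde\theta_n)|\geq c_\ast'/2$ eventually w.h.p.; then \eqref{calc9} gives $|\mathcal{U}_n(\tilde\theta_n)|\geq c_\ast'/4 - |\bar o_p(1)(\tilde\theta_n)|\geq c_\ast$ w.h.p. for any $c_\ast\in(0,c_\ast'/4)$. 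The only delicate point is checking that the $o_p(1)$ factors in the denominator ratios do not destroy the uniformity of the $\bar o_p(1)(\tilde\theta)$ coming from Lemma \ref{lemma10}, but this is immediate because those ratios are independent of $\tilde\theta$ beyond the sign, so no real obstacle is anticipated.
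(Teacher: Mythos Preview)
Your proposal is correct and follows essentially the same approach as the paper: the paper's proof is the single sentence ``We have \eqref{calc9} by Lemmas \ref{lemma1}, \ref{lemma9} and \ref{lemma10},'' and you have simply unpacked what this means, then derived (1) and (2) from \eqref{calc9} together with Propositions \ref{lemma2} and \ref{lemma5} (equivalently, Corollary \ref{corollary1}). Your observation that the denominator ratios depend on $\tilde\theta$ only through its sign, and hence preserve uniformity, is the only point requiring care and you handle it correctly.
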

\begin{proof}
We have (\ref{calc9}) by Lemmas \ref{lemma1}, \ref{lemma9} and \ref{lemma10}.
\end{proof}

Now we are ready to prove Theorem \ref{theorem1}.

\begin{proof}[Proof of Theorem \ref{theorem1}]
Thanks to Proposition \ref{corollary2}, (\ref{limit13}) follows by the same line of argument as in the proof of Theorem $\ref{theorem1}$ when $ A^{1}=A^{2}=0 $.
\end{proof}

\section{Examples} \label{section7}

In this section, we give some examples that satisfy the assumptions of Theorem \ref{theorem1}.

\subsection{Processes} \label{section3}

\subsubsection{The solution of a stochastic differential equation driven by fBM}

Let $(\Omega,\mathcal{F},\mathbb{G}=(\mathcal{G}_{t})_{t\in[0,T+2\delta]},\mathbb{P})$ be a stochastic basis which supports fBMs $B^{1}$ and $B^{2}$ satisfying (\ref{calc4}). Here we assume that the $\sigma$-field $ \mathcal{F} $ is complete and the filtration $\mathbb{G}$ satisfies the usual conditions.  Let us consider a stochastic differential equation 
\begin{align}\label{equation1}
X_{t}^{l} = X_{0}^{l} + \int_{0}^{t} b(s,X_{s}^{l})\ ds + \sigma^{l} B_{t}^{l}
\end{align}
for $ t\in[0,T+2\delta] $ and $l=1,2$. We assume that $X_{0}$ is a real-valued random variable, $\sigma^{l} \in \mathbb{R}\setminus\{0\}$ and the function $b$ satisfies the following conditions (D1) and (D2).

\begin{assumption} \label{ass4}
The function $ b\colon [0,T+2\delta]\times \mathbb{R} \to \mathbb{R} $ satisfies that
\begin{enumerate}
\item[(D1)] for each positive integer $N$ there exists a positive constant $ L_{N}>0 $ such that
\begin{align*}
|b(t,x) - b(t,y)| \leq L_{N}|x-y|
\end{align*}
for all $x\in[-N.N]$, $y\in[-N,N]$ and $t\in[0,T+2\delta]$, and
\item[(D2)] there exists a positive constant $L_{0}>0$ such that
\begin{align*}
|b(t,x)| \leq L_{0} (1+|x|)
\end{align*}
for all $ x\in\mathbb{R} $ and $t\in[0,T+2\delta]$.
\end{enumerate}
\end{assumption}

Then there exists a unique solution $X^{l}$ to the equation (\ref{equation1}) which has $1/2$-H\"{o}lder continuous sample paths almost surely. This is a special case of Theorem 2.1 of \cite{nualart2002differential}.

Let us check the processes $\tau_{-\theta}(X^{1})$ and $X^{2}$ satisfy Assumption \ref{ass1}. Since $X^{l}$ satisfies the equation (\ref{equation1}), it suffices to verify (A3). We have
\begin{align*}
| b(s,{X}_{s}^{l}) | \leq L_{0} (| {X}_{s}^{l} | + 1) \leq L_{0}( |X_{s}^{l}-X_{0}^{l}| +| {X}_{0}^{l} | + 1 )  \leq L_{0}( C(T+2\delta)^{1/2} +| {X}_{0}^{l} | + 1 )
\end{align*}
for some H\"{o}lder constant $C>0$. Hence it holds that
\begin{align*}
|A_{t}^{l} - A_{s}^{l}| \leq  L_{0}( C(T+2\delta)^{1/2} +| {X}_{0}^{l} | + 1 )|t-s|
\end{align*}
for $0\leq s\leq t \leq T+2\delta$.

\subsection{Sampling scheme} \label{section4}

\subsubsection{Synchronous and evenly spaced observations}

Let us check that the synchronous and evenly spaced observations satisfy Assumption \ref{ass2}. Let $i^{l,n}_{\ast}(a)$ denote
\begin{align*}
i^{l,n}_{\ast}(a) = \inf\{ i \mid \overline{I_{i}^{l,n}} \geq a \} 
\end{align*}
for $a>0$. Note that, using $ i^{l,n}_{\ast}(a) $, we obtain the following relation:
\begin{align*}
\frac{ \sum_{i:\overline{I_{i}^{l,n}} \leq T, \underline{I_{i}^{l,n}} \geq \epsilon} | I_{i}^{l,n} |^{H_{1}+H_{2}} }{ \sqrt{\sum_{i} |I_{i}^{1,n}|^{2H_{1}} } \sqrt{ \sum_{j} |I_{j}^{2,n}|^{2H_{2}} } } = \frac{ \sum_{i=i^{l,n}_{\ast}(\epsilon)+1}^{i^{l,n}_{\ast}(T)-1} | I_{i}^{l,n} |^{H_{1}+H_{2}} }{ \sqrt{\sum_{i} |I_{i}^{1,n}|^{2H_{1}} } \sqrt{ \sum_{j} |I_{j}^{2,n}|^{2H_{2}} } }.
\end{align*}

\begin{proposition}\label{proposition1}
Let us set 
\begin{align*}
\mathcal{T}^{1,n} = \mathcal{T}^{2,n} = \left\{ \frac{i}{n}(T+\delta) \mid i=0,\ldots,n \right\}.
\end{align*}
Then $\mathcal{T}^{1,n}$ and $\mathcal{T}^{2,n}$ satisfy Assumption \ref{ass2} with $ v_{n} = (1/n)^{1-\xi} $ for any $ \xi \in  \left( 0, \frac{1-(H_{1}\vee H_{2})}{2-(H_{1} \vee H_{2})} \right) $.
\end{proposition}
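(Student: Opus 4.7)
The plan is to verify the four clauses of Assumption \ref{ass2} in order. Since $\mathcal{T}^{1,n}$ and $\mathcal{T}^{2,n}$ are deterministic, (B1) is immediate: the $\sigma$-algebras $\sigma(\cup_n \mathcal{T}^{1,n})$, $\sigma(\cup_n \mathcal{T}^{2,n})$ are trivial and therefore independent of each other and of $B^1, B^2$. All subsequent verifications reduce to elementary arithmetic once one records the two basic quantities
\[
r_n = \frac{T+\delta}{n}, \qquad \sum_{i:\overline{I_i^{l,n}}\leq T} |I_i^{l,n}|^{2H_l} \;=\; \frac{\lfloor Tn/(T+\delta)\rfloor}{n} (T+\delta)\left(\frac{T+\delta}{n}\right)^{2H_l-1},
\]
and the analogous expressions without the restriction $\overline{I_i^{l,n}}\leq T$.

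For (B2), I would fix $\epsilon\in(0,T)$. Both sums $\sum_{i}|I_i^{l,n}|^{2H_l}$ scale like $(T+\delta)^{2H_l}n^{1-2H_l}$, while the restricted numerator $\sum_{i:\overline{I_i^{l,n}}\leq T,\ \underline{I_i^{l,n}}\geq\epsilon} |I_i^{l,n}|^{H_1+H_2}$ scales like $(T-\epsilon)(T+\delta)^{H_1+H_2-1}n^{1-H_1-H_2}$. Both fractions in (\ref{limit7}) and (\ref{limit8}) thus converge deterministically to $(T-\epsilon)/(T+\delta)$, so any $c_*$ strictly less than this value makes the probabilities equal to $1$ eventually.

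For (B3), direct substitution of the closed-form expressions gives
\[
\frac{r_n^{2H_l-1+\mu}}{\sum_{i:\overline{I_i^{l,n}}\leq T}|I_i^{l,n}|^{2H_l}} \;\asymp\; \frac{(T+\delta)^{\mu}}{T}\,n^{-\mu}.
\]
With $v_n = n^{-(1-\xi)}$ and $\xi>0$, choosing $\gamma(\mu) := \mu/(1-\xi)$ yields $\gamma(\mu) > \mu$ and $n^{-\mu} = v_n^{\gamma(\mu)}$, so (B3) holds.

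For (B4), since $\#\mathcal{I}^{l,n} = n$ is deterministic,
\[
v_n^{2-(H_1\vee H_2)}(\mathbb{E}\{\#\mathcal{I}^{1,n}\} + \mathbb{E}\{\#\mathcal{I}^{2,n}\}) \;=\; 2n^{\,1-(1-\xi)(2-(H_1\vee H_2))}.
\]
The exponent is negative precisely when $(1-\xi)(2-(H_1\vee H_2))>1$, i.e.\ when $\xi < (1-(H_1\vee H_2))/(2-(H_1\vee H_2))$, which is the stated range for $\xi$. The only mild subtlety (hardly an obstacle) is choosing the range for $\xi$ tight enough so that (B4) holds while still leaving room in (B3); all three estimates are compatible throughout the entire interval $\xi\in(0,(1-(H_1\vee H_2))/(2-(H_1\vee H_2)))$, so a single choice of $v_n$ works.
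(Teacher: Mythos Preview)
Your proof is correct and follows essentially the same approach as the paper: both verify (B1)--(B4) by direct elementary computation, obtaining the limit $(T-\epsilon)/(T+\delta)$ for (B2), the choice $\gamma(\mu)=\mu/(1-\xi)$ for (B3), and the exponent condition $(1-\xi)(2-(H_1\vee H_2))>1$ for (B4). Your treatment is slightly more explicit about the closed-form scaling of the sums, but the argument is identical in substance.
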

\begin{proof}
First (B1) is clear. Note that $ i_{\ast}^{l,n}(\epsilon) = \left\lceil \frac{\epsilon n}{T+\delta} \right\rceil $ in this case. Let us check (B2):
\begin{align*}
\nonumber \frac{ \sum_{i:\overline{I_{i}^{l,n}} \leq T, \underline{I_{i}^{l,n}} \geq \epsilon} | I_{i}^{l,n} |^{H_{1}+H_{2}} }{ \sqrt{\sum_{i} |I_{i}^{1,n}|^{2H_{1}} } \sqrt{ \sum_{j} |I_{j}^{2,n}|^{2H_{2}} } } &= \frac{ \sum_{i=i^{l,n}_{\ast}(\epsilon)+1}^{i^{l,n}_{\ast}(T)-1} | I_{i}^{l,n} |^{H_{1}+H_{2}} }{ \sqrt{\sum_{i} |I_{i}^{1,n}|^{2H_{1}} } \sqrt{ \sum_{j} |I_{j}^{2,n}|^{2H_{2}} } }
\\ \nonumber &= \frac{1}{n}\left( \left\lceil \frac{Tn}{T+\delta} \right\rceil - 1 - \left\lceil \frac{\epsilon n}{T+\delta} \right\rceil \right) \\
&\to \frac{T-\epsilon}{ T+\delta }
\end{align*}
as $n\to\infty$. Next we verify (B3):
\begin{align*}
\frac{ r_{n}^{2H_{l}-1+\mu} }{ \sum_{i:\overline{I_{i}^{l,n}} \leq T } | I_{i}^{l,n} |^{2H_{l}} } = \left( \frac{1}{n} \right)^{\mu} \left( \frac{1}{n}\left\lceil \frac{Tn}{T+\delta} \right\rceil \right)^{-1} = v_{n}^{\frac{\mu}{1-\xi}} \left( \frac{1}{n}\left\lceil \frac{Tn}{T+\delta} \right\rceil \right)^{-1} .
\end{align*}
We can take $ \gamma(\mu) = \mu/(1-\xi)$. 
Finally we verify (B4):
\begin{align*}
v_{n}^{2-(H_{1}\vee H_{2})} \mathbb{E} \{ \#\mathcal{I}^{1,n} \} =  v_{n}^{2-(H_{1}\vee H_{2})} \mathbb{E} \{ \# \mathcal{I}^{2,n} \} = \left(\frac{1}{n}\right)^{(1-\xi)(2-(H_{1}\vee H_{2})) -1}.
\end{align*}
If $ \xi \in \left(0,\frac{1-(H_{1}\vee H_{2})}{2-(H_{1}\vee H_{2})} \right) $, then $(1-\xi)(2-(H_{1}\vee H_{2})) -1 >0$ so that (B4) holds.
\end{proof}

\subsubsection{Poisson Sampling}

We check that the Poisson sampling scheme satisfies Assumption \ref{ass2}. Let $ p_{1} >0 $ and  $ p_{2} >0 $ be positive numbers, and $N^{1,n} $ and $ N^{2,n}$ be mutually independent Poisson processes with intensity $ p_{1}n $ and $ p_{2}n $, respectively. We also assume $N^{1,n}$ and $N^{2,n}$ are independent of the underlying fractional Brownian motions $B^{1}$ and $B^{2}$. We set $ T_{i}^{l,n} = \inf\{ t\geq 0\colon N_{t}^{l,n} = i \} \wedge (T+\delta) $, $  \tilde{T}_{i}^{l,n} = \inf\{ t\geq 0\colon N_{t}^{l,n} = i \}  $, $I_{i}^{l,n} = ( T_{i-1}^{l,n} , T_{i}^{l,n} ]$, $ \tilde{I}_{i}^{l,n} = ( \tilde{T}_{i-1}^{l,n}, \tilde{T}_{i}^{l,n} ] $ and $ \lambda^{l,n}(t) = \lceil p_{l}nt \rceil  $. Note that $i_{\ast}^{l,n}(t) = N_{t-}^{l,n}+1$ in this case.

\begin{proposition}\label{proposition2}
We set
\[
\mathcal{T}^{1,n} = \left\{ T_{i}^{1,n} \mid i=0,\ldots,i_{\ast}^{1,n}(T+\delta) \right\}
\]
and
\[
\mathcal{T}^{2,n} = \left\{ T_{j}^{2,n} \mid j=0,\ldots,i_{\ast}^{2,n}(T+\delta) \right\}.
\]
Then $\mathcal{T}^{1,n}$ and $\mathcal{T}^{2,n}$ satisfy Assumption \ref{ass2} with $v_{n} = (1/n)^{1-\eta}$ for any $\eta \in \left( 0, \frac{1-(H_{1}\vee H_{2})}{2-(H_{1} \vee H_{2})} \right)$.
\end{proposition}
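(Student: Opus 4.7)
The plan is to reduce the verification of (B1)--(B4) to standard limit theorems for i.i.d.\ exponential random variables. Condition (B1) is immediate from the assumed independence of $N^{1,n},N^{2,n}$ from each other and from $B^{1},B^{2}$. For the remaining conditions, note that for each $l$ the inter-arrival times $\xi_{i}^{l,n}:=\tilde{T}_{i}^{l,n}-\tilde{T}_{i-1}^{l,n}$, $i\geq 1$, are i.i.d.\ $\mathrm{Exp}(p_{l}n)$, so the rescaled variables $\zeta_{i}^{l,n}:=p_{l}n\,\xi_{i}^{l,n}$ are i.i.d.\ $\mathrm{Exp}(1)$ with $\mathbb{E}\{(\zeta_{i}^{l,n})^{\alpha}\}=\Gamma(\alpha+1)$ for every $\alpha>0$. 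Observe that $|I_{i}^{l,n}|=\xi_{i}^{l,n}$ for $i\leq N_{(T+\delta)-}^{l,n}$, while the single truncated final interval satisfies $|I_{N_{(T+\delta)-}^{l,n}+1}^{l,n}|\leq \xi_{N_{(T+\delta)-}^{l,n}+1}^{l,n}$.

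\textbf{Conditions (B2) and (B4).} By the LLN for Poisson processes, $N_{t}^{l,n}/n\to^{p}p_{l}t$ uniformly on $[0,T+\delta]$, so the random count $M_{l}^{n}(\epsilon,T):=\#\{i:\underline{I_{i}^{l,n}}\geq \epsilon,\ \overline{I_{i}^{l,n}}\leq T\}$ satisfies $M_{l}^{n}(\epsilon,T)/n\to^{p}p_{l}(T-\epsilon)$, and $\mathbb{E}\{\#\mathcal{I}^{l,n}\}=p_{l}n(T+\delta)+1$. Applying a SLLN with random index (Anscombe's theorem) to the rescaled variables $(\zeta_{i}^{l,n})^{\alpha}$, I will derive
\[
\sum_{i:\underline{I_{i}^{l,n}}\geq\epsilon,\,\overline{I_{i}^{l,n}}\leq T}|I_{i}^{l,n}|^{H_{1}+H_{2}}=\bigl(\Gamma(H_{1}+H_{2}+1)p_{l}^{1-H_{1}-H_{2}}(T-\epsilon)+o_{p}(1)\bigr)\,n^{1-(H_{1}+H_{2})},
\]
and, similarly, $\sum_{i}|I_{i}^{l,n}|^{2H_{l}}=\bigl(\Gamma(2H_{l}+1)p_{l}^{1-2H_{l}}(T+\delta)+o_{p}(1)\bigr)n^{1-2H_{l}}$ (the truncated interval contributes $O_{p}(n^{-2H_{l}})$, which is negligible compared with $n^{1-2H_{l}}$). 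Since $1-(H_{1}+H_{2})=\tfrac{1}{2}(1-2H_{1})+\tfrac{1}{2}(1-2H_{2})$, the powers of $n$ in the ratio of (\ref{limit7})--(\ref{limit8}) cancel and the ratio converges in probability to a strictly positive constant, which yields (B2). Condition (B4) follows directly from $\mathbb{E}\{\#\mathcal{I}^{l,n}\}=O(n)$: one has $v_{n}^{2-(H_{1}\vee H_{2})}\mathbb{E}\{\#\mathcal{I}^{l,n}\}=O\bigl(n^{1-(1-\eta)(2-(H_{1}\vee H_{2}))}\bigr)$, which tends to zero iff $(1-\eta)(2-(H_{1}\vee H_{2}))>1$, i.e.\ $\eta<(1-(H_{1}\vee H_{2}))/(2-(H_{1}\vee H_{2}))$, the hypothesis.

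\textbf{Condition (B3).} The key step is the bound $r_{n}=O_{p}(\log n/n)$. On the event $A_{n}:=\{\#\mathcal{I}^{l,n}\leq 2p_{l}n(T+\delta)\text{ for }l=1,2\}$ (whose probability tends to $1$ by the LLN), a union bound over independent exponentials gives, for any $\tau>0$,
\[
\mathbb{P}\bigl\{r_{n}\geq (1+\tau)\log n/((p_{1}\wedge p_{2})n),\ A_{n}\bigr\}\leq \sum_{l=1,2}2p_{l}n(T+\delta)\,n^{-(1+\tau)}\longrightarrow 0.
\]
Combined with the denominator estimate of order $n^{1-2H_{l}}$ in probability from step (B2), the ratio in (B3) is $O_{p}\bigl((\log n)^{2H_{l}-1+\mu}n^{-\mu}\bigr)$. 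Choosing $\gamma(\mu)=\tfrac{1}{2}\bigl(\mu+\mu/(1-\eta)\bigr)$, one has $\mu<\gamma(\mu)<\mu/(1-\eta)$; hence $(1-\eta)\gamma(\mu)<\mu$, so $v_{n}^{\gamma(\mu)}=n^{-(1-\eta)\gamma(\mu)}$ dominates any fixed power of $\log n$ times $n^{-\mu}$, which verifies (B3).

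\textbf{Main obstacle.} The only nontrivial point is the careful application of a SLLN with a random index $M_{l}^{n}(\epsilon,T)$ in step (B2); this is handled by Anscombe's theorem (or, more directly, by monotonicity of partial sums of i.i.d.\ nonnegative random variables combined with the LLN for the Poisson counts). The exponential tail bound in (B3) and the elementary bookkeeping in (B1) and (B4) are routine.
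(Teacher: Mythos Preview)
Your argument is correct, and it follows a genuinely different---in fact more elementary---route than the paper's own proof.

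For (B2), the paper does not invoke Anscombe's theorem.  Instead it sandwiches the random-index sum $R_{9}^{l,n}(a)$ between two deterministic-index sums $R_{10}^{l,n,\pm}(a)$ by using Lemma~\ref{hy9} (concentration of the Poisson count $i_{\ast}^{l,n}(a)$ around $\lambda^{l,n}(a)=\lceil p_{l}na\rceil$), and then handles the deterministic-index sums via Kolmogorov's maximal inequality applied to the centred summands $|\tilde I_{i}^{l,n}|^{2H}-\Gamma(2H+1)/(np_{l})^{2H}$.  Your one-step appeal to Anscombe's theorem (or, equivalently, monotonicity of partial sums of nonnegative i.i.d.\ variables combined with the LLN for the Poisson count) reaches the same limit constants with less machinery.

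For (B3), the paper again relies on an external ingredient, namely the moment bound $\mathbb{E}\{\tilde r_{n}^{q}\}=o(n^{\alpha})$ for any $0<\alpha<q$ (Lemma~\ref{hy8}), and then combines it with Lemma~\ref{hy9} to control the denominator, choosing $\gamma(\mu)=(1+\zeta)\mu$ with $\eta>\zeta/(1+\zeta)$.  Your direct union bound for exponential tails on the event $\{\#\mathcal{I}^{l,n}\leq 2p_{l}n(T+\delta)\}$ gives the sharper statement $r_{n}=O_{p}(\log n/n)$, which makes the verification of (B3) transparent; your choice $\gamma(\mu)=\tfrac{1}{2}\bigl(\mu+\mu/(1-\eta)\bigr)$ is simply the midpoint of the admissible range $(\mu,\mu/(1-\eta))$ and works for the same reason as the paper's choice.

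In short: the paper's proof leverages two lemmas imported from \cite{hayashi2008asymptotic} plus a Kolmogorov-inequality sandwich, whereas your proof is self-contained, using only Anscombe's theorem and an elementary exponential tail bound.  Both are valid; yours is shorter and avoids external references, while the paper's approach fits naturally into the Hayashi--Yoshida framework already cited elsewhere in the article.  Conditions (B1) and (B4) are handled identically in both proofs.
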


In order to prove Proposition \ref{proposition2}, we use the following lemmas.

\begin{lemma}\label{hy8}
Let 
\begin{align*} \tilde{r}_{n} = \left(\max_{ 1\leq i \leq \lambda^{1,n}(T+\delta)\vee i_{\ast}^{1,n}(T+\delta) } | \tilde{I}_{i}^{1,n} | \right) \vee \left(\max_{ 1\leq j \leq \lambda^{2,n}(T+\delta)\vee i_{\ast}^{2,n}(T+\delta) } | \tilde{I}_{j}^{2,n} | \right). 
\end{align*}
Then for any $ q\geq 1 $, we have
\begin{align*}
E\{ \tilde{r}_{n}^{q} \}  = o(n^{\alpha}),
\end{align*}
for any $ 0 < \alpha < q $.
\end{lemma}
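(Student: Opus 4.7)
\bigskip

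\noindent\textbf{Proof proposal for Lemma \ref{hy8}.}

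The plan is to bound $\tilde{r}_n$ by the maximum of a deterministic number of i.i.d.\ exponential random variables on a high-probability event, then to apply the standard extreme-value estimate for exponentials. The inter-arrival times $E_i^{(l)}:=|\tilde{I}_i^{l,n}|$ form, for each fixed $l$, an i.i.d.\ sequence with common law $\mathrm{Exp}(p_l n)$, so everything reduces to a tail computation.

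First, I would truncate. Fix $C>(p_1\vee p_2)(T+\delta)$ and set
\[
\Omega_n \;=\; \bigl\{N^{1,n}_{T+\delta}\le Cn,\ N^{2,n}_{T+\delta}\le Cn\bigr\}.
\]
The Chernoff bound for a Poisson random variable with mean $p_l n(T+\delta)<Cn$ gives $\mathbb{P}(\Omega_n^{c})\le 2e^{-\kappa n}$ for some $\kappa=\kappa(C)>0$. On $\Omega_n$ one has $i^{l,n}_{\ast}(T+\delta)=N^{l,n}_{T+\delta}+1\le Cn+1$, and $\lambda^{l,n}(T+\delta)=\lceil p_l n(T+\delta)\rceil\le Cn+1$, so for $n$ large
\[
M_n^l:=\lambda^{l,n}(T+\delta)\vee i^{l,n}_{\ast}(T+\delta)\le K_n:=\lfloor 2Cn\rfloor+1.
\]

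Next, I would apply the union-bound tail estimate. For each $l$ and $x>0$,
\[
\mathbb{P}\Bigl\{\max_{1\le i\le K_n} E_i^{(l)}>x\Bigr\}\le K_n e^{-p_l n x},
\]
because $E_i^{(l)}\sim\mathrm{Exp}(p_l n)$ are i.i.d. Writing $\mathbb{E}\{Y^q\}=q\int_0^\infty x^{q-1}\mathbb{P}\{Y>x\}\,dx$ and splitting the integral at $x_0=(q+2)\log n/(p_l n)$, the portion over $[0,x_0]$ is at most $x_0^q = O((\log n/n)^q)$, and the portion over $(x_0,\infty)$ is
\[
q\,K_n\int_{x_0}^{\infty}x^{q-1}e^{-p_l n x}\,dx \;=\; O\!\bigl(n^{-q-1}(\log n)^{q-1}\bigr)
\]
by a standard Laplace-type estimate. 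Hence $\mathbb{E}\bigl\{(\max_{i\le K_n}E_i^{(l)})^q\bigr\}=O((\log n/n)^q)$. Since on $\Omega_n$ we have $\tilde{r}_n^q\le \sum_l(\max_{i\le K_n}E_i^{(l)})^q$, this yields $\mathbb{E}\{\tilde{r}_n^q\mathbf{1}_{\Omega_n}\}=O((\log n/n)^q)$.

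For the exceptional event, I would use Cauchy--Schwarz:
\[
\mathbb{E}\{\tilde{r}_n^q\mathbf{1}_{\Omega_n^c}\}\le \mathbb{E}\{\tilde{r}_n^{2q}\}^{1/2}\mathbb{P}(\Omega_n^c)^{1/2}.
\]
A crude polynomial-in-$n$ bound on $\mathbb{E}\{\tilde{r}_n^{2q}\}$ suffices here: one can dominate $\tilde{r}_n\le \sum_{l,i}E_i^{(l)}\cdot\mathbf{1}_{i\le \bar M_n^l}$ where $\bar M_n^l=\lambda^{l,n}(T+\delta)+N^{l,n}_{T+\delta}+1$, and use that $N^{l,n}_{T+\delta}$ and each $E_i^{(l)}$ have all moments with moments of the latter of order $(p_l n)^{-2q}$. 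Coupled with the exponential decay of $\mathbb{P}(\Omega_n^c)$, this term is $o(n^{-\beta})$ for every $\beta>0$. Combining both contributions gives $\mathbb{E}\{\tilde{r}_n^q\}=O((\log n/n)^q)$, which is in particular $o(n^{\alpha})$ for every $0<\alpha<q$ (and in fact much smaller), proving the claim.

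The only mildly delicate point is the $\Omega_n^c$ estimate, because the random upper index $\bar M_n^l$ is not independent of the $E_i^{(l)}$. This is handled by the crude dominating bound above, after which everything dissolves into straightforward moment computations and the exponential Chernoff tail. The heart of the argument is the standard fact that the maximum of $O(n)$ i.i.d.\ $\mathrm{Exp}(pn)$ variables is of order $\log n/n$ in $L^q$.
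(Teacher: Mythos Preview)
The paper does not give its own proof of this lemma; it simply cites Lemma~8 of \cite{hayashi2008asymptotic}. Your argument supplies a self-contained proof, and it is correct. The strategy---Chernoff truncation of the random upper index to a deterministic $K_n=O(n)$, the classical $L^q$ extreme-value bound for the maximum of $O(n)$ i.i.d.\ $\mathrm{Exp}(cn)$ variables via tail integration, and a Cauchy--Schwarz cleanup on the exponentially small exceptional set---is the natural route and is almost certainly what the cited reference does as well.

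Two minor remarks. First, the statement as printed in the paper reads $o(n^{\alpha})$, but its actual use in the verification of (B3) for Proposition~\ref{proposition2} requires $o(n^{-\alpha})$; your bound $\mathbb{E}\{\tilde r_n^{q}\}=O((\log n/n)^{q})$ delivers exactly this, and you correctly flag that you are proving something stronger than the literal statement. Second, the $\Omega_n^c$ step can be made cleaner by observing that $\tilde r_n^{2q}\le\sum_{l}\sum_{i\le \bar M_n^{l}}(E_i^{(l)})^{2q}$ and that the event $\{\bar M_n^{l}\ge i\}$ lies in $\sigma(E_1^{(l)},\dots,E_{i-1}^{(l)})$ (since $N_{T+\delta}^{l,n}\ge k$ is determined by $E_1^{(l)},\dots,E_k^{(l)}$), hence is independent of $E_i^{(l)}$; this yields $\mathbb{E}\{\tilde r_n^{2q}\}\le\sum_l\mathbb{E}\{\bar M_n^{l}\}\cdot(2q)!/(p_l n)^{2q}=O(n^{1-2q})$ directly, avoiding the somewhat awkward domination of $\tilde r_n$ by a full sum of inter-arrival times. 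But your sketch already suffices because $\mathbb{P}(\Omega_n^c)$ decays exponentially.
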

\begin{proof}
See Lemma 8 of \cite{hayashi2008asymptotic}.
\end{proof}

\begin{lemma}\label{hy9}
Let $\epsilon \in (1/2,1) $. Then, for each $T>0$, it holds that
\begin{align*}
\mathrm{ P }\{ | \lambda^{l,n}(T) - i_{\ast}^{l,n}(T) | \leq n^{\epsilon} \} \to 1,
\end{align*}
as $n\to\infty$ for $l=1,2$.
\end{lemma}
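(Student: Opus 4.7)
The plan is to bound $|\lambda^{l,n}(T) - i_{\ast}^{l,n}(T)|$ by comparing both quantities to the mean $p_l n T$ of the Poisson process $N^{l,n}$ at time $T$, and then use a second-moment (Chebyshev) estimate to control the random deviation.

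First I would decompose the difference using the triangle inequality:
\begin{align*}
|\lambda^{l,n}(T) - i_{\ast}^{l,n}(T)|
&\leq |\lceil p_l n T\rceil - p_l n T| + |p_l n T - N_{T-}^{l,n}| + 1.
\end{align*}
The first term is at most $1$ by definition of the ceiling. For the middle term, note that $N_{T-}^{l,n} = N_T^{l,n}$ almost surely for the fixed time $T$ (a Poisson process has no fixed-time jumps), so we may work with $N_T^{l,n}$, which is Poisson with mean $p_l n T$ and variance $p_l n T$.

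Next, by Chebyshev's inequality,
\begin{align*}
\mathrm{P}\{|N_T^{l,n} - p_l n T| > n^{\epsilon} - 2\}
\leq \frac{p_l n T}{(n^{\epsilon}-2)^{2}},
\end{align*}
which tends to $0$ as $n \to \infty$ because $2\epsilon > 1$. Combining with the two deterministic error terms (each bounded by $1$), we get
\begin{align*}
\mathrm{P}\{|\lambda^{l,n}(T) - i_{\ast}^{l,n}(T)| > n^{\epsilon}\}
\leq \mathrm{P}\{|N_T^{l,n} - p_l n T| > n^{\epsilon} - 2\} \to 0,
\end{align*}
which gives the claim.

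There is really no main obstacle here: the entire content of the lemma is that the Poisson fluctuation $N_T^{l,n} - p_l n T$ is of order $\sqrt{n}$, which is negligible compared to $n^{\epsilon}$ for any $\epsilon > 1/2$. The only minor subtlety is distinguishing $N_{T-}^{l,n}$ from $N_T^{l,n}$, which is resolved by the standard fact that a Poisson process is continuous at any deterministic time almost surely.
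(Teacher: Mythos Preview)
Your argument is correct. The paper does not supply its own proof of this lemma but simply refers to Lemma~9 of Hayashi--Yoshida (2008); your Chebyshev bound on the Poisson count $N_T^{l,n}$, together with the trivial rounding corrections, is exactly the standard way to establish this fact and is what one expects that reference to contain as well.
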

\begin{proof}
See Lemma 9 of \cite{hayashi2008asymptotic}.
\end{proof}

Before starting the proof of Proposition \ref{proposition2}, we introduce some notation: for $ a\in(0,T+\delta] $ and $ H\in(1/2,1) $, we set 
\begin{align*}
R_{9}^{l,n}(a)= \frac{ \sum_{i=1}^{i_{\ast}^{l,n}(a)-1} |I_{i}^{l,n}|^{H_{1}+H_{2}} }{ \sqrt{ \sum_{i} | I_{i}^{1,n} |^{2H_{1}} }\sqrt{\sum_{j} | I_{j}^{2,n} |^{2H_{2}}}  },
\end{align*}
\begin{align*}
R_{10}^{l,n,\pm}(a) = \frac{ \sum_{i=1}^{\lambda^{l,n}(a) \pm \lceil 2n^{2/3} \rceil} |\tilde{I}_{i}^{l,n}|^{H_{1}+H_{2}} }{ \sqrt{ \sum_{i=1}^{\lambda^{l,n}(T+\delta) \mp \lceil 2n^{2/3} \rceil} | \tilde{I}_{i}^{1,n} |^{2H_{1}} }\sqrt{\sum_{j=1}^{\lambda^{l,n}(T+\delta) \mp \lceil 2n^{2/3} \rceil} | \tilde{I}_{j}^{2,n} |^{2H_{2}}}  }, 
\end{align*}
\begin{align*}
R_{11}^{l,n,\pm}(H,a) = \sum_{i=1}^{\lambda^{l,n}(a) \pm \lceil 2n^{2/3}\rceil }  \left( |\tilde{I}_{i}^{l,n}|^{2H} - \frac{\Gamma(2H+1)}{(np_{l})^{2H}} \right),
\end{align*}
and
\begin{align*}
R_{12}^{l,n,\pm}(H,a) = \frac{\Gamma(2H+1)(\lambda^{l,n}(a)\pm\lceil2n^{2/3}\rceil)}{(np_{l})^{2H} }.
\end{align*}
Note that we have the following relation:
\begin{align}\label{calc10}
\nonumber R_{10}^{l,n,\pm}(a) &= \left( R_{11}^{l,n,\pm} \left(\frac{H_{1}+H_{2}}{2} , a\right) + R_{12}^{l,n,\pm} \left(\frac{H_{1}+H_{2}}{2} , a\right) \right) \\ 
\nonumber &\quad \times \left( R_{11}^{1,n,\mp}(H_{1} , T+\delta) + R_{12}^{1,n,\mp}(H_{1} , T+\delta) \right)^{-1/2} \\
&\quad \times \left( R_{11}^{2,n,\mp}(H_{2} , T+\delta) + R_{12}^{2,n,\mp}(H_{2} , T+\delta) \right)^{-1/2}  .
\end{align}

\begin{proof}[Proof of Proposition \ref{proposition2}]

By assumption, (B1) holds. Let us show that Poisson sampling scheme satisfies condition (B2).

\textbf{Claim.} Let $ a\in(0,T+\delta] $. Then it holds that
\begin{align} \label{limit14}
R_{9}^{n}(a) \to^{p} \frac{a}{T+\delta} c_{l}
\end{align}
as $n\to\infty$. Here $ c_{l} $ is a constant depending on $l$, $H_{1}$, $H_{2}$, $p_{1}$ and $p_{2}$:
\begin{align*}
c_{l} = \frac{p_{1}^{H_{1}-1/2} p_{2}^{H_{2}-1/2} \Gamma(H_{1}+H_{2}+1) }{p_{l}^{H_{1}+H_{2}-1 } \Gamma(2H_{1}+1) \Gamma(2H_{2}+1)}.
\end{align*}
In particular, (B2) holds. 

Let us show (\ref{limit14}). First note that a straightforward calculation yields $ \mathrm{E}\{ | \tilde{I}_{i}^{l,n} |^{2H} \} = \Gamma(2H+1)/(np_{l})^{2H} $ and $ \mathrm{Var}\{  |\tilde{I}_{i}^{l,n}|^{2H} \} = ( \Gamma(4H+1)-\Gamma(2H+1)^{2} )/(np_{l})^{4H} $ for $ H\in(1/2,1) $. 
By Lemma \ref{hy9}, we can assume $ | \lambda^{l,n}(a) - i_{\ast}^{l,n}(a) | \leq n^{\epsilon} $ and $ | \lambda^{l,n}(T+\delta) - i_{\ast}^{l,n}(T+\delta) | \leq n^{\epsilon} $ for $\epsilon\in(1/2,1)$. Since $ | \lambda^{l,n}(a) - i_{\ast}^{l,n}(a) | \leq n^{\epsilon} $, we have $ \lambda^{l,n}(a) - \lceil 2n^{\epsilon} \rceil < i^{l,n}_{\ast}(a) $ and hence $ I_{i}^{l,n} = \tilde{I}_{i}^{l,n} $ if $ i \leq \lambda^{l,n}(a) - \lceil 2n^{\epsilon} \rceil $. Note that clearly $ | I_{i}^{l,n} | \leq | \tilde{I}_{i}^{l,n} | $ holds for all $i$. Hence we have
\begin{align*}
R_{10}^{l,n,-}(a) \leq R_{9}^{l,n}(a) \leq R_{10}^{l,n,+}(a).
\end{align*}
In order to prove (\ref{limit14}), it suffices to show 
\begin{align}\label{limit15}
R_{10}^{l,n,\pm}(a) \to^{p} \frac{a}{T+\delta} c_{l}
\end{align}
as $n\to\infty$. Thanks to (\ref{calc10}), we can further reduce $ (\ref{limit15}) $ to the following limits: 
\begin{align}\label{limit16}
n^{2H-1} R_{12}^{l,n,\pm}(H,a) \to \frac{ a \Gamma(2H+1) }{(T+\delta)p_{l}^{2H-1}}
\end{align}
and 
\begin{align}\label{limit17}
R_{11}^{l,n,\pm}(H,a) = o_{p}(n^{1-2H})
\end{align}
as $n\to\infty$ for each $ H\in(1/2,1) $. First (\ref{limit16}) is straightforward. Next, Kolmogorov's inequality shows that
\begin{align}\label{ineq13}
\mathbb{P}\left\{ n^{2H-1}\left| R_{11}^{l,n,\pm} \right| \geq n^{-\xi} \right\} \leq \frac{\lambda^{l,n}(a) \pm \lceil 2n^{2/3}\rceil }{n^{2(1-\xi)}p_{l}^{4H}} (\Gamma(4H+1)-\Gamma(2H+1)^{2}) 
\end{align}
holds for $H\in(1/2,1)$, and the right-hand-side of (\ref{ineq13}) goes to $0$ as $n\to \infty$ if $ \xi<1/2 $. Hence (\ref{limit17}) holds.

As a consequence of (\ref{limit14}), we can conclude that (B2) holds: we have
\begin{align*}
\frac{ \sum_{i:\overline{I_{i}^{l,n}} \leq T, \underline{I_{i}^{l,n}} \geq \epsilon} | I_{i}^{l,n} |^{H_{1}+H_{2}} }{ \sqrt{\sum_{i} |I_{i}^{1,n}|^{2H_{1}} } \sqrt{ \sum_{j} |I_{j}^{2,n}|^{2H_{2}} } } \to^{p} \frac{T-\epsilon}{T+\delta}c_{l}
\end{align*}
as $n\to\infty$.

Next we show that (B3) holds. Note that for each $ \eta >0 $, we can choose $ \zeta>0 $ satisfying $ \eta > \zeta/(1+\zeta) $.
\textbf{Claim.} If $ \zeta >0 $ satisfies $ \eta > \zeta/(1+\zeta) $, then, for each $\mu>0$ and $\gamma(\mu):=(1+\zeta)\mu >\mu$, it holds that 
\[
\frac{ r_{n}^{2H_{l}-1+\mu} }{ \sum_{i:\overline{I_{i}^{l,n}} \leq T } | I_{i}^{l,n} |^{2H_{l}} } = o_{p}(v_{n}^{\gamma(\mu)}).
\]
In particular, (B3) holds.

Indeed, let $ \epsilon >0 $ be any positive number. Then we have
\begin{align*}
\mathbb{P}\left\{ \frac{ r_{n}^{2H_{l}-1+\mu} v_{n}^{-\gamma(\mu)} }{ \sum_{i:\overline{I_{i}^{l,n}} \leq T } | I_{i}^{l,n} |^{2H_{l}} }  \geq \epsilon \right\} &\leq \mathbb{P}\left\{ \frac{ \tilde{r}_{n}^{2H_{l}-1+\mu} n^{(1-\eta)(1+\zeta)\mu} }{ \sum_{i=1}^{\lambda^{l,n}(T)-\lceil 2n^{2/3}\rceil} | \tilde{I}_{i}^{l,n} |^{2H_{l}} } \geq \epsilon,\  | \lambda^{l,n}(T) - i_{\ast}^{l,n}(T) | \leq n^{2/3}  \right\} \\
&\quad + \mathbb{P}\left\{  | \lambda^{l,n}(T) - i_{\ast}^{l,n}(T) | > n^{2/3}  \right\} \\
&\leq \mathbb{P} \left\{ \frac{\tilde{r}_{n}^{2H_{l}-1+\mu} n^{2H_{l}-1+(1-\eta)(1+\zeta)\mu } }{ n^{2H_{l}-1}  R_{11}^{l,n,\pm}(H_{l},T) + n^{2H_{l}-1}  R_{12}^{l,n,\pm}(H_{l},T) } \geq \epsilon \right\} + o(1).
\end{align*}
Hence it holds that 
\begin{align} \label{ineq14}
\nonumber \mathbb{P}\left\{ \frac{ r_{n}^{2H_{l}-1+\mu} v_{n}^{-\gamma(\mu)} }{ \sum_{i:\overline{I_{i}^{l,n}} \leq T } | I_{i}^{l,n} |^{2H_{l}} }  \geq \epsilon \right\} &\leq \mathbb{P} \left\{ \tilde{r}_{n}^{2H_{l}-1+\mu} n^{2H_{l}-1+(1-\eta)(1+\zeta)\mu } \geq \frac{\epsilon T \Gamma(2H_{l}+1)}{2(T+\delta)p_{l}^{2H_{l}-1}} \right\} + o(1) \\
&\leq \left( \frac{\epsilon T \Gamma(2H_{l}+1)}{2(T+\delta)p_{l}^{2H_{l}-1}} \right)^{\frac{-1}{2H_{l}-1+\mu}} \mathbb{E}\{ \tilde{r}_{n} \} n^{\frac{2H_{l}-1 + (1+\zeta)(1-\eta)\mu}{2H_{l}-1+\mu} } + o(1).
\end{align}
Since $ (1+\zeta)(1-\eta)\mu < \mu $, Lemma \ref{hy8} implies that the right-hand-side of (\ref{ineq14}) goes to zero as $n\to\infty$.


Note that $\mathbb{E} \{ \#\mathcal{I}^{l,n} \} = p_{l}n(T+\delta)$. We can verify (B4) easily: since $ (1-\eta)(2-(H_{1}\vee H_{2})) >1 $ when $\eta \in \left( 0, \frac{1-(H_{1}\vee H_{2})}{2-(H_{1} \vee H_{2})} \right)$, the condition (B4) holds.
\end{proof}

\subsection{Simulation study}

In this subsection, we present a numerical simulation. We use the R package YUIMA to generate the data, plot the paths, and calculate the estimated values.

%


We consider the following setting. We set $dX^{1}_{t}=dB^{1}_{t}$ and $dX^{2}_{t}=dB^{2}_{t}$ where $B^{1}$ and $B^{2}$ satisfy (\ref{calc4}) with $H_{1}=0.6$, $H_{2}=0.7$. We vary the correlation parameter $\rho$ as $\rho=0.25, 0.50$ and $0.75$. The time horizon $T$ equals $1$. We assume that the processes $X^{1}$ and $X^{2}$ are observed by independent Poisson sampling schemes whose intensities are both $n$. We also vary the intensity $n$ as $n=100, 300$ and $500$. We set the true lead-lag parameter $\theta=0.02$, so that the process $X^{1}$ has a lead of $\theta=0.02$ over the process $X^{2}$. To calculate the estimator, we use the grids $\mathcal{G}_{1}=\{ -1 + 10^{-3}k\mid k\in\mathbb{Z}_{\geq0} \}\cap[-1,1]$ and $\mathcal{G}_{2}=\{ -1 + (3\cdot10^{-3})k\mid k\in\mathbb{Z}_{\geq0} \}\cap[-1,1]$. Hence the grid $\mathcal{G}_{1}$ contains the true parameter $\theta=0.02$, and the grid $\mathcal{G}_{2}$ does not. 

First we fix the grid. Then we repeat 300 simulations and compute the value of the estimator $\hat{\theta}_{n}$ each time, letting $\rho$ and $n$ vary as we explained above. That is, we consider the following nine cases after we fix the grid: $(\rho, n) = (0.25, 100)$, $(0.25,300)$, $(0.25, 500)$, $(0.50, 100)$, $(0.50, 300)$, $(0.50, 500)$, $(0.75,100)$, $(0.75, 300)$ and $(0.75,500)$. 

Figure \ref{figure1} (resp. 2) shows a plot of estimated values $\hat{\theta}_{n}$ with the grid $\mathcal{G}_{1}$ (resp. $\mathcal{G}_{2}$). The red dashed lines show the true parameter $\theta=0.02$. 

Figure \ref{figure3} shows a simulated observation data. Figure \ref{figure4} shows a plot of the contrast function $\mathcal{U}_{n}(\tilde{\theta})$ using the simulated data shown in Figure \ref{figure3}. We use the grid $\mathcal{G}=\mathcal{G}_{1}$. Again the red dashed line shows the true parameter $\theta=0.02$. In this simulation, the estimated parameter value is $\hat{\theta}_{n}=0.02$.

\section*{Acknowledgement}
The author would like to express deepest gratitude to Professor Nakahiro Yoshida for introducing him to this problem and for many valuable suggestions. He is also very grateful to two anonymous referees for their really helpful comments and suggestions. He could never have improved the results and the presentation of this paper without their helpful comments. 

This work was supported by JST CREST and the Program for Leading Graduate Schools, MEXT, Japan.

\section*{Conflict of interest}
The author has no conflicts of interest to declare.

\bibliographystyle{abbrvnat}
\bibliography{fllagbib} 

%

\begin{figure}
\begin{center} 
\includegraphics[clip]{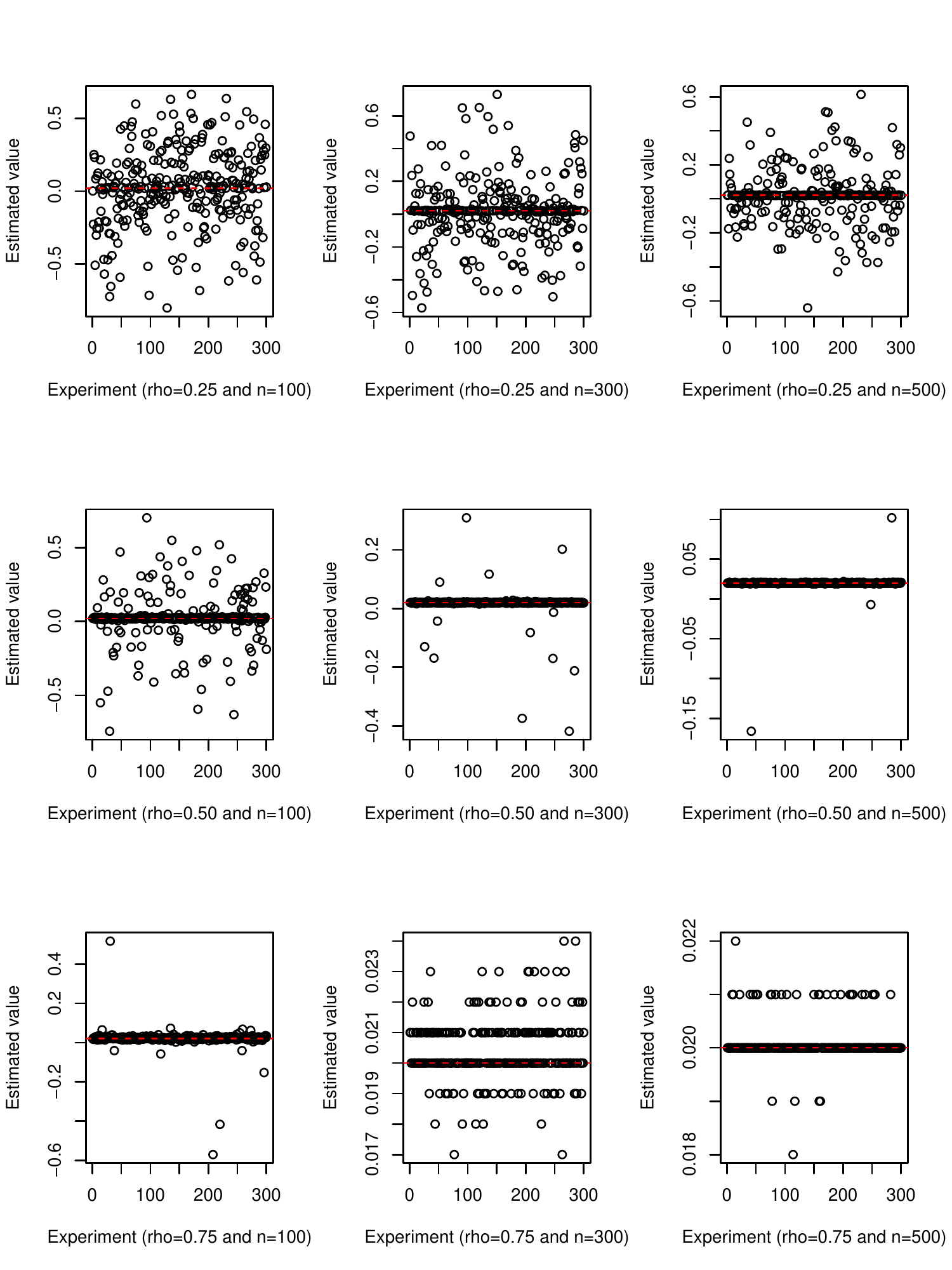}
\caption{A plot of estimated values $\hat{\theta}_{n}$ (colored online). We fix a grid $\mathcal{G}_{1}=\{ -1 + 10^{-3}k\mid k\in\mathbb{Z}_{\geq0} \}\cap[-1,1]$, repeat 300 simulations and plot the computed values of the estimator $\hat{\theta}_{n}$. The red dashed lines show the true parameter $\theta=0.02$.}
\label{figure1}
\end{center}
\end{figure}

\begin{figure}
\begin{center} 
\includegraphics[clip]{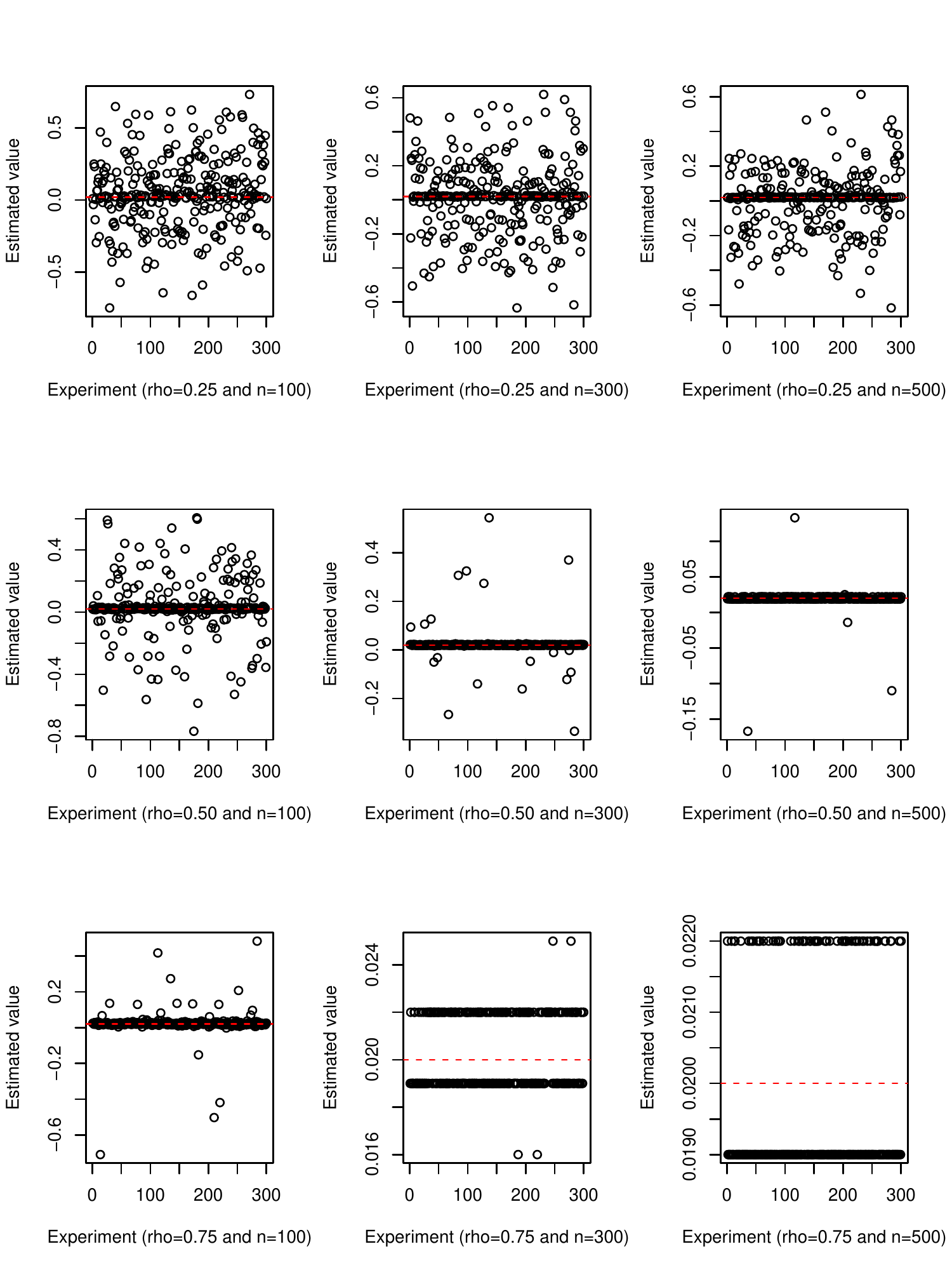}
\caption{A plot of estimated values $\hat{\theta}_{n}$ (colored online). We fix a grid $\mathcal{G}_{2}=\{ -1 + (3\cdot10^{-3})k\mid k\in\mathbb{Z}_{\geq0} \}\cap[-1,1]$, repeat 300 simulations and plot the computed values of the estimator $\hat{\theta}_{n}$. The red dashed lines show the true parameter $\theta=0.02$. Note that the grid $\mathcal{G}_{2}$ does not contain the true parameter $\theta=0.02$.}
\label{figure2}
\end{center}
\end{figure}

\begin{figure}
\begin{center} 
\includegraphics[clip]{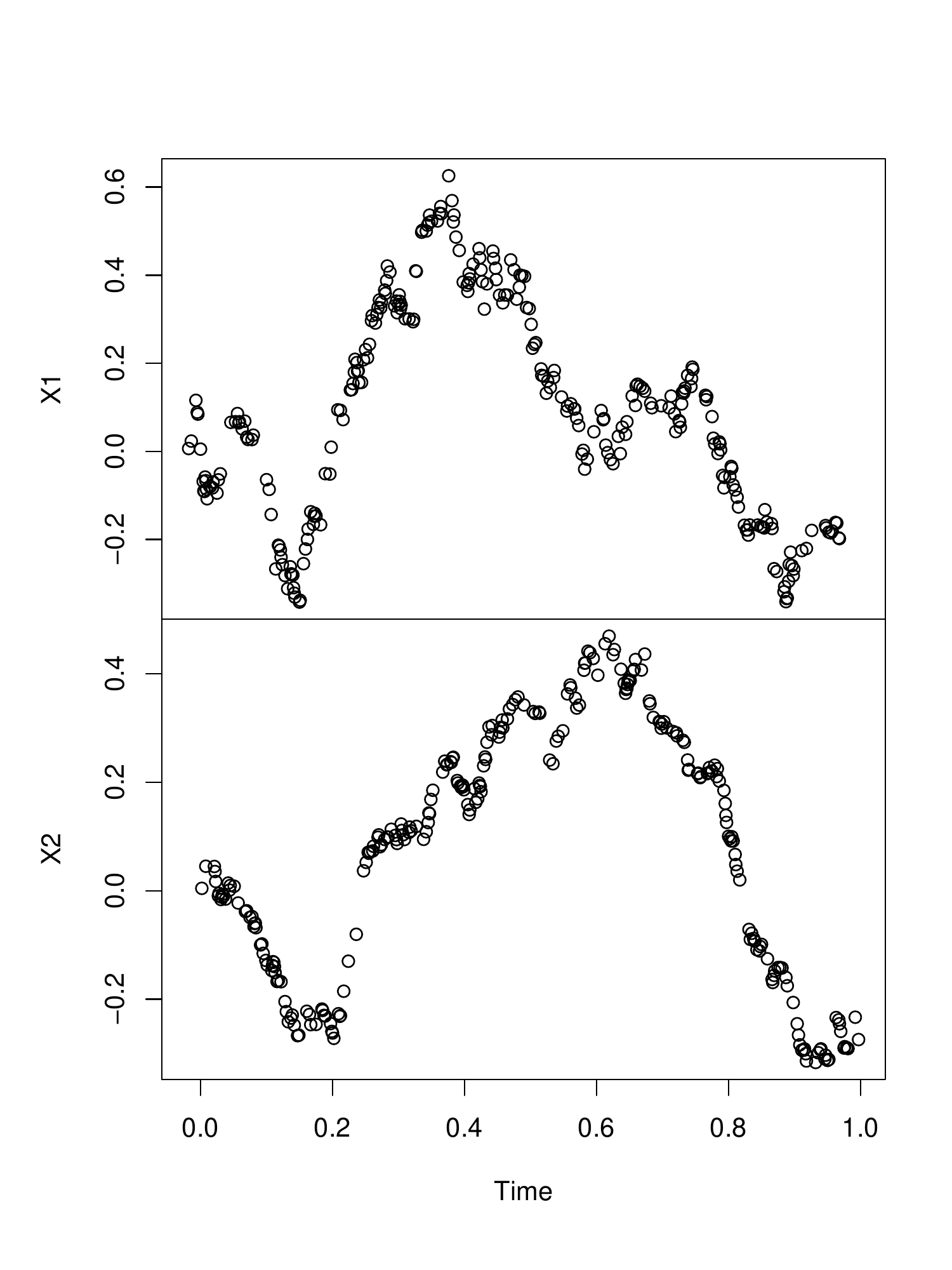}
\caption{A simulated observation data. The underlying processes are $dX^{1}_{t}=dB^{1}_{t}$ and $dX^{2}_{t}=dB^{2}_{t}$ where $B^{1}$ and $B^{2}$ satisfy (\ref{calc4}) with $H_{1}=0.6$, $H_{2}=0.7$ and $\rho=0.50$. The time horizon is $T=1$, and we use independent Poisson sampling schemes both with intensity $n=300$. The lead-lag parameter is $\theta = 0.02$, so that the process $X^{1}$ has a lead of $\theta=0.02$ over the process $X^{2}$. } 
\label{figure3}
\end{center}
\end{figure}

\begin{figure}
\begin{center} 
\includegraphics[clip]{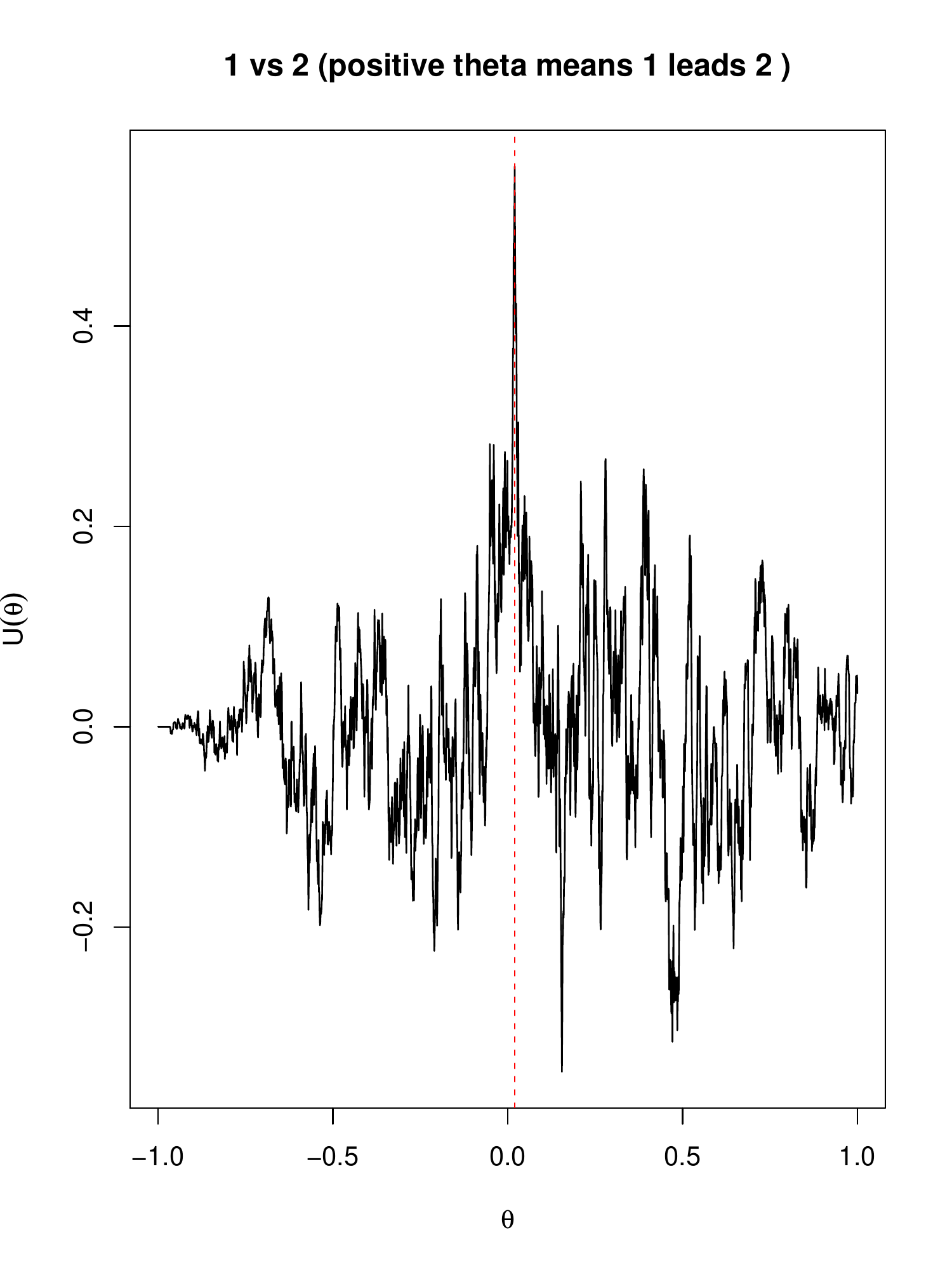}
\caption{A plot of the contrast function $\mathcal{U}_{n}(\tilde{\theta})$ (colored online). We use the simulated data shown in Figure \ref{figure3}. In order to maximize the contrast, we use the grid $\mathcal{G}_{1}=\{ -1 + 10^{-3}k\mid k\in\mathbb{Z}_{\geq0} \}\cap[-1,1]$. The red dashed vertical line shows $\tilde{\theta}=0.02$. In this simulation, the estimated value is $\hat{\theta}_{n}=0.02$.  } 
\label{figure4}
\end{center}
\end{figure}

\end{document}